\newcommand{\R}{{\mathbb R}}
\newcommand{\supp}{\text{\rm supp}}
\newcommand{\ap}{\alpha}             
\newcommand{\bt}{\beta}
\newcommand{\gm}{\gamma}             \newcommand{\Gm}{\Gamma}
\newcommand{\dt}{\delta}             
\newcommand{\vep}{\varepsilon}
\newcommand{\kp}{\kappa}
\newcommand{\ld}{\lambda}            \newcommand{\Ld}{\Lambda}
\newcommand{\sm}{\sigma}             
\newcommand{\vp}{\varphi}
\newcommand{\om}{\omega}             \newcommand{\Om}{\Omega}
\newcommand{\vr}{\varrho}            \newcommand{\iy}{\infty}
\newcommand{\f}{\frac}
\newcommand{\fB}{{\mathfrak B}}
\newcommand{\fL}{{\mathfrak L}}
\newcommand{\fN}{{\mathfrak N}}
\newcommand{\BN}{{\mathbb N}}
\newcommand{\BR}{{\mathbb R}}
\newcommand{\cA}{{\mathcal A}}
\newcommand{\cB}{{\mathcal B}}
\newcommand{\cC}{{\mathcal C}}
\newcommand{\cG}{{\mathcal G}}
\newcommand{\cI}{{\mathcal I}}
\newcommand{\cJ}{{\mathcal J}}
\newcommand{\cK}{{\mathcal K}}
\newcommand{\cL}{{\mathcal L}}
\newcommand{\cM}{{\mathcal M}}
\newcommand{\s}{\setminus}         \newcommand{\ep}{\epsilon}
\newcommand{\n}{\nabla}            \newcommand{\e}{\eta}
\newcommand{\pa}{\partial}        \newcommand{\fd}{\fallingdotseq}
    \newcommand{\ds}{\displaystyle}
 \newcommand{\pf }{\noindent{\it Proof. }}
\newcommand{\rk }{\noindent{\it Remark. }}
\newcommand{\aee }{\text{\rm a.e.}} \newcommand{\diam }{\text{\rm diam}}
  \newcommand{\pv }{\text{\rm p.v.}}
\newcommand{\osc }{\text{\rm osc}}  \newcommand{\dd }{\text{\rm d}}
\newcommand{\rB }{{\text{\rm B}}}   \newcommand{\rC }{{\text{\rm C}}}
\newcommand{\rQ}{{\text{\rm Q}}}
\newcommand{\rK }{{\text{\rm K}}}
\newcommand{\bQ}{\bar{\text{\rm Q}}}
\newtheorem{thm}[subsubsection]{Theorem}
\newtheorem{lemma}[subsubsection]{Lemma}
\newtheorem{cor}[subsubsection]{Corollary}
\newtheorem{definition}[subsubsection]{Definition}
\numberwithin{equation}{subsection}
\title[fully nonlinear integro-differential operators]{ Regularity results for fully nonlinear parabolic integro-differential operators
 }
\author{ Yong-Cheol Kim and Ki-Ahm Lee }
\begin{document}
\begin{abstract}
 In this paper, we consider the regularity theory for fully nonlinear parabolic integro-differential
 equations with symmetric kernels. We are able to find
 parabolic  versions of Alexandro詮�-Backelman-Pucci estimate
 with $0<\sigma<2$.  And we show a Harnack inequality, H\"older regularity, and
 $C^{1,\alpha}$-regularity of the solutions by obtaining decay
 estimates of their level sets.
\end{abstract}
\thanks {2000 Mathematics Subject Classification: 47G20, 45K05,
35J60, 35B65, 35D10 (60J75)}

\address{$\bullet$ Yong-Cheol Kim : Department of Mathematics Education, Korea University, Seoul 136-701,
Korea }

\email{ychkim@korea.ac.kr}

\address{$\bullet$ Ki-Ahm Lee : Department of Mathematics, Seoul National University, Seoul 151-747,
Korea} \email{kiahm@math.snu.ac.kr}

\maketitle

\tableofcontents

\section{Introduction}\label{sec-intro}

In this paper, we are going to find a parabolic version of Alexandro詮�-Backelman-Pucci estimate, Harnack inequality, H\"older regularity, and $C^{1,\alpha}$-regularity whose elliptic versions have been considered
at \cite{CS} with symmetric kernels and at \cite{KL1,KL2} with nonsymmetric kernels.
The concept of viscosity solutions and notations are parallel with those at \cite{CS,KL1,KL2} with minor changes.

The linear {\it parabolic integro-differential operators}  are given as
\begin{equation}\label{eq-nonsym-l}
\cL u(x,t)-\partial_t u(x,t)=\pv\int_{\BR^n}\mu(u,x,y)K(y,t)\,dy-\partial_t u(x,t)
\end{equation}
for  $\mu(u,x,y,t)=u(x+y,t)-u(x,t)-(\n u(x,t)\cdot y)\chi_{B_1}(y)$, which
describes  the infinitesimal generator of given purely jump
processes, i.e. processes without diffusion or drift part \cite{CS}.
We refer the detailed definitions of notations to \cite{CS, KL1, KL2}. Then we
see that $\cL u(x,t)$ is well-defined provided that
$u\in\rC_x^{1,1}(x,t)\cap\rB(\BR^n\times [0,T])$ where $\rB(\BR^n\times [0,T])$ denotes {\it the
family of all real-valued bounded functions defined on $\BR^n\times [0,T]$} and $\rC_x^{1,1}(x,t)$ means $C^{1,1}$-function at $x$ for a given $t$.  If
$K$ is symmetric (i.e. $K(-y,t)=K(y,t)$), then an odd
   function $\bigl[(\n u(x,t)\cdot
y)\chi_{B_1}(y)\bigr]K(y,t)$ will be canceled in the integral, and so
we have that
$$\cL  u(x,t)=\pv\int_{\BR^n}\bigl[u(x+y,t)+u(x-y,t)-2u(x,t)\bigr]K(y,t)\,dy .$$

Nonlinear integro-differential operators come from the  stochastic
control theory related with
\begin{equation*}
\cI u(x,t)=\sup_{\ap}\cL_{\ap}u(x,t),
\end{equation*}
or game theory associated with
\begin{equation}\label{eq-game}
\cI u(x,t)=\inf_{\bt}\sup_{\ap}\cL_{\ap\bt}u(x,t),
\end{equation}
when the stochastic process is of L\`evy type allowing jumps; see
\cite{S, CS, KL1}. Also an operator like $\cI
u(x,t)=\sup_{\ap}\inf_{\bt}\cL_{\ap\bt}u(x,t)$ can be considered.
Characteristic properties of these operators can easily be derived
as follows;
\begin{equation}\label{eq-class-1}
\begin{split}\inf_{\ap\bt}\cL_{\ap\bt}
v(x,t)&\le\cI [u+v](x,t)-\cI u(x,t)\le\sup_{\ap\bt}\cL_{\ap\bt} v(x,t).
\end{split}
\end{equation}

\subsection{Operators}
In this section, we introduce a class of operators. All notations
and the concepts of viscosity solution follow \cite{CS} with minor changes.

For parabolic setting and our purpose, we shall consider functions
$u(x,t)$ defined on $\BR^{n}\times [0,T]$ and restrict our attention to the
operators $\cL$ where the measure  is given by a positive
kernel $K$ which is symmetric. That is to say, the
operators $\cL$ are given by
\begin{equation}\label{eq-sym-l}
\cL u(x,t)=\pv \int_{\BR^n}\mu(u,x,y,t)K(y,t)\,dy
\end{equation}
where $\mu(u,x,y,t)=u(x+y,t)+u(x-y,t)-2 u(x,t)$. And we consider the
class $\fL$ of the operators $\cL$ associated with  positive kernels $K\in\cK_0$ satisfying that
\begin{equation}\label{eq-kernel-bound}(2-\sm)\f{\ld}{|y|^{n+\sm}}\le
K(y,t)\le(2-\sm)\f{\Ld}{|y|^{n+\sm}},\,\,0<\sm<2.
\end{equation}
The maximal operator and the minimal operator with respect to $\fL$
are defined by
\begin{equation}\label{eq-pucci}\cM^+_{\fL}u(x,t)=\sup_{\cL\in\fL}\cL u(x,t)\,\,\text{ and }\,\,\cM^-_{\fL}u(x,t)=\inf_{\cL\in\fL}\cL u(x,t).
\end{equation}

In what follows, we let $\Om\subset\BR^n$ be a bounded open domain,
$I=(\tau_1,\tau_2]$ be a bounded half-open interval where
$\tau_1<-100$ and $\tau_2>100$, and $J=(a,b]\subseteq I$. For
$(x,t)\in\Om_J\fd\Om\times J$ and a function $u:\BR^n\times I\to\BR$
which is semicontinuous on $\overline\Om_J$, we say that $\vp$
belongs to the function class $\rC^2_{\Om_J}(u;x,t)^+$ (resp.
$\rC^2_{\Om_J}(u;x,t)^-$) and we write
$\vp\in\rC^2_{\Om_J}(u;x,t)^+$ (resp.
$\vp\in\rC^2_{\Om_J}(u;x,t)^-$) if there exists a
$U_{t,\dt}$ such that $\vp(x,t)=u(x,t)$ and $\vp>u$
(resp. $\vp<u$) on $U_{t,\dt}\s\{(x,t)\}$ for some open neighborhood
$U\subset\Om$ of $x$ and some $(t-\dt,t]\subset J$, where
$U_{t,\dt}=U\times(t-\dt,t]$. We note that geometrically $u-\vp$
having a local maximum at $(x,t)$ in $\Om_J$ is equivalent to
$\vp\in\rC^2_{\Om_J}(u;x,t)^+$ and $u-\vp$ having a local minimum at
$(x,t)$ in $\Om_J$ is equivalent to $\vp\in\rC^2_{\Om_J}(u;x,t)^-$.
And  the expression for
$\cL_{\ap\bt}\,u(x,t)$ and $\cI u(x,t)$ may be written
as
\begin{equation*}\begin{split}\cL_{\ap\bt}\,u(x,t)&=\int_{\BR^n}\mu(u,x,y,t)K_{\ap\bt}(y,t)\,dy,\\
\cI
u(x,t)&=\inf_{\bt}\sup_{\ap}\cL_{\ap\bt}\,u(x,t),\end{split}\end{equation*}
where $K_{\ap\bt}\in\cK_0$. Then we see
$\cM^-_{\fL}u(x,t)\le\cI
u(x,t)\le\cM^+_{\fL}u(x,t)$, and
$\cM^+_{\fL}u(x,t)$ and $\cM^-_{\fL}u(x,t)$ have the
following simple forms;
\begin{equation}\label{eq-pucci-integral}
\begin{split}
&\cM^+_{\fL}u(x,t)=(2-\sm)\int_{\BR^n}\f{\Ld\mu^+(u,x,y,t)
-\ld\mu^-(u,x,y,t)}{|y|^{n+\sm}}\,dy,\\
&\cM^-_{\fL}u(x,t)=(2-\sm)\int_{\BR^n}\f{\ld\mu^+(u,x,y,t)
-\Ld\mu^-(u,x,y,t)}{|y|^{n+\sm}}\,dy,
\end{split}
\end{equation}
where $\mu^+$ and $\mu^-$ are given by
\begin{equation*}\begin{split}\mu^{\pm}(u,x,y,t)&=\max\{\pm
    \mu(u,x,y,t),0\}.
\end{split}\end{equation*}

A function $u:\BR^n\times I\to\BR$ is said to be $\text{\rm
$\rC^{1,1}_{x,\pm}$ at $(x,t)\in\BR^n\times I$}$ $($we write
$u\in\rC^{1,1}_{x,\pm}(x,t)$$)$, if there are $r_0>0$ and $M>0$ (independent of $s$)
such that
\begin{equation}\label{eq-u-quadradic}
\pm\left(u(x+y,t)+u(x-y,t)-2u(x,t)\right)\le M\,|y|^2
\end{equation}
for any
$(y,s)\in B_{r_0}(0)\times(-r_0,0].$

 We write $u\in\rC_{x,\pm}^{1,1}(U_t)$
if $u\in\rC_{x,\pm}^{1,1}(x,t)$ for any $(x,t)\in U_t$ and the constant
$M$ in \eqref{eq-u-quadradic} is independent of $(x,t)$, where
$U_t=U\times(t-\dt,t]\subset\BR^n\times I$ for some $\dt>0$ for an
open subset $U$ of $\BR^n$. And we denote
$\rC_x^{1,1}(x,t)=\rC^{1,1}_{x,+}(x,t)\cap\rC^{1,1}_{x,-}(x,t)$,
$\rC_x^{1,1}(U_t)=\rC^{1,1}_{x,+}(U_t)\cap\rC_{x,-}^{1,1}(U_t)$, and $\rC^{1,1}(U_t)=\rC^{1,1}_{x,}(U_t)\cup \rC^{0,1}_{t}(U_t)$.

We note that if $u\in\rC_x^{1,1}(x,t)$, then $\cI u(x,t)$ and $\cM^{\pm}_{\fL}u(x,t)$ will be well-defined. We
shall use these maximal and minimal operators to obtain regularity
estimates.

Let $K(x,t)=\sup_{\ap}K_{\ap}(x,t)$ where $K_{\ap}$'s are all the
kernels of all operators in a class $\fL$. For any class $\fL$, we
shall assume that
\begin{equation}\label{eq-kernel-cond}
\int_{\BR^n}(|y|^2\wedge 1)\,K(y,t)\,dy<\iy.
\end{equation}

The following is a kind of
operators of which the regularity result shall be obtained in this
paper.

\begin{definition}\label{def-class} Let $\fL$ be a class of linear
integro-differential operators. Assume that \eqref{eq-kernel-cond}
holds for $\fL$. Then we say that an operator $\cJ$ is
elliptic with respect to $\fL$, if it satisfies the following
properties:

$(a)$ $\cJ u(x,t)$ is well-defined for any $u\in\rC_x^{1,1}(x,t)\cap
\rB(\BR^n\times {t})$.

$(b)$ $\cJ u$ is continuous on an open $\Om_J\subset\BR^n\times I$,
whenever $u\in\rC_x^{1,1}(\Om_J)\cap\rB(\BR^n\times I)$.

$(c)$ If $u,v\in\rC_x^{1,1}(x,t)\cap\rB(\BR^n\times {t})$, then we
have that
\begin{equation}\label{eq-class}
\cM^-_{\fL}[u-v](x,t)\le\cJ u(x,t)-\cJ
v(x,t)\le\cM^+_{\fL}[u-v](x,t).
\end{equation}
\end{definition}

The concept of viscosity solutions, its comparison principle and
stability properties can be obtained with small modifications from \cite{CS}  as
\cite{W1}.
We summarized them at section \ref{sec-pre}.

\subsection{Main equation}
The natural Dirichlet problem for such parabolic  nonlocal operator $\cI$ in $\BR^n\times (0,T]$ is given as the following.
Given  functions $\vp$ and $g$ defined on $(\BR^n\times (0,T])\s (\Omega\times (0,T])$ and $\Omega$ respectively, we want to
find a function $u$ such that
\begin{equation}
\begin{cases}
u_t(x,t)-\cI u(x,t)=0 &\text{ for any $(x,t)\in\Om\times (0,T] $,}\\
u(x,t)=\vp(x,t) &\text{for $(x,t)\in(\BR^n\times [0,T])\s (\Omega\times [0,T])$.}\\
u(x,0)=g(x)&\text{for $x\in\Omega$.}
\end{cases}
\end{equation}
 Note that the
boundary condition is given not only on $\pa_p(\Omega\times (0,T])$ but also on the
whole complement of $(\Omega\times (0,T])$. This is because of the nonlocal
character of the operator $\cI$. From the stochastic point of view,
it corresponds to the fact that a discontinuous L\`evy process can
exit the domain $(\Omega\times (0,T])$ for the first time jumping to any point in
$(\BR^n\times (0,T])\s (\Omega\times (0,T])$.

In this paper, we shall concentrate mainly upon the regularity
properties of viscosity solutions to an equation $u_t(x,t)-\cI
u(x,t)=0$.

\subsection{Known results and Key Observations}

There are some known results about Harnack inequalities and H\"older
estimates for  integro-differential operators with positive symmetric
kernels (see \cite{J} for analytical proofs and \cite{BBC},
\cite{BK1}, \cite{BK2},\cite{BL}, \cite{KS}, \cite{SV} for
probabilistic proofs).
 More general results for the elliptic cases have been shown\cite{CS}) for symmetric kernels and \cite{KL1,KL2} for nonsymmetric kernels.
The analytic approach for the linear parabolic equations can be found at \cite{CV}.

There are some serious difficulties arises when we try to extend the results in elliptic case to the parabolic
equations.
Key observations are the following:

$\bullet$  The equation is local in time while it is nonlocal in the space variable.
                 Caffarelli and Silvestre considered a sequence of  dyadic rings in space at A-B-P estimate
                 to find the balance of quantities in the integral. But a simple generalization of the ring in space
to one in space-time fails since the equation is local in the time variable. Such unbalance between local and nonlocal terms in the equation requires more fine analysis to find a parabolic version of A-B-P estimate at section \ref{sec-abp}.

$\bullet$ There is a time delay to control the lower bound in a small neighborhood of a point by the current value at the point, which is a main difference between elliptic and parabolic equations.
Such  time-delay effect has been shown at Lemma \ref{lem-decay-key} with a parabolic A-B-P estimate and a barrier, Lemma \ref{lem-barrier-3}. And we also need a parabolic version of Calderon-Zygmund decomposition which has been considered at \cite{W1}.

\subsection{Outline of Paper}
In Section \ref{sec-abp},  we show   nonlocal versions of the parabolic
nonlocal Alexandroff-Backelman-Pucci estimate  to handle the difficulties
caused by the locality in the time varibale.  In Section
\ref{sec-decay}, we construct a special function and apply a parabolic version of A-B-P
estimates  to obtain the decay estimates of upper level sets which is
essential in proving H\"older estimates in Section \ref{sec-regularity}.

In Section \ref{sec-regularity}, we prove the H\"older estimates and
an interior $\rC^{1,\ap}$-estimates come from the arguments at
\cite{CS,KL1,KL2}. We also show a Harnack inequality.

\subsection{Notations}
We summarize the notations of domains  briefly for the reader's convenience.
\begin{enumerate}
\item  $\pa_p(\Om\times J)=\pa\Om\times J\bigcup\overline\Om\times\{a\}$. $\pa^*_p(\Om\times J)=(\R^n\s\Om)\times J\bigcup\R^n\times\{a\}$.
\item  $Q_r=B_r(0)\times (-r^{\sigma},0]$, $Q_r(x,t)=Q_r+(x,t)$, and $Q=Q_1(0,1)$, $Q^-_r=Q_r(0,r^{\sigma})$, $Q^{+}_r=Q_r(x,t)$.
\item $\rQ_1\fd
Q=B_1\times(0,1]$, $\rQ_r\fd Q_r(0,r^{\sm})=B_r\times(0,r^{\sm}]$,
$\rQ^-_r\fd\rQ_r+(0,r^{\sm})=B_r\times(r^{\sm},2 r^{\sm})$,
$\rQ^+_r\fd\rQ^-_r+(0,2 r^{\sm})=B_r\times(3 r^{\sm},4 r^{\sm})$,
$\rQ_r(x_0,t_0)=\rQ_r+(x_0,t_0)$ and
$\rB^{\dd}_r(x_0,t_0)=\{(x,t)\in\BR^n\times
I:\dd\bigl((x,t),(x_0,t_0)\bigr)<r\}$ are defined at Section \ref{sec-decay-it}
\item $\bQ_r\fd\rQ_r\cup Q_r=B_r\times(-r^{\sm},r^{\sm}]$ and
$\bQ_r(x_0,t_0)=\bQ_r+(x_0,t_0)$ are defined at Section
\ref{sec-regularity-c1a}.
\item   $K_r=(-r,r)^n\times (-r^2,0]$ and
$K_r(x,t)=K_r+(x,t)$.
\item  $\rK_r=(-r,r)^n\times (-r^{\sigma},0]$,
 $\rK_r(x,t)=\rK_r+(x,t)$,
   $\rK^-_r=\rK_r(0,r^{\sigma})$, $\rK^+_{3r}=(-3 r,3 r)^n\times(r^{\sigma},(3^{\sm}+2)
r^{\sigma}]$,  $\rK_r^-(x,t)=\rK_r^-+(x,t)$,  and $\rK_{3r}^+(x,t)=\rK_{3r}^+ +(x,t)$.
\item $\cG^u_h$ and $\cB^u_s$ are defined at Section \ref{sec-decay-key}.
\item $ \overline{\rK}^m$,   $\tilde{\rK}^m$,    $\cA^m_{\delta}$,  and $\fB^m_{\delta}$ are defined at Section \ref{sec-decay-cz}.

\end{enumerate}


\section{ Preliminaries }\label{sec-pre}

The parabolic distance for $P_1=(x,t)$ and $P_2=(y,s)$ is defined to
be
\begin{equation}
d(P_1,P_2)=
\begin{cases}
(|x-y|^{\sigma}+|t-s|)^{1/\sigma}, &t\le s,\\
\iy, &t>s.
\end{cases}
\end{equation}
We define the parabolic boundary of $\Omega\times J$ by
$\pa_p\Omega\times J=\pa\Om\times J\bigcup\overline\Om\times\{a\}$. For
$r>0$, we set $Q_r(x,t)=B_r(x)\times (t-r^\sigma,t]$ and
$Q^c_r(x,t)=(\BR^n\s B_r(x))\times (t-r^\sigma,t]$. We also define
the diameter $d$ of $Q_r(x,t)$ by $d=\sqrt 5\,r$.

\begin{definition} \label{def-viscosity}Let $f:\BR^n\times I\to\BR$ be a continuous
function. Then a function $u:\BR^n\times I\to\BR$ which is upper
$($lower$)$ semicontinuous on $\overline\Omega\times J$ is said to be a
viscosity subsolution (viscosity supersolution) of an equation
$u_t-\cJ u=f$ on $\Omega\times J$ and we write $u_t-\cJ u\le f$ $($ $u_t-\cJ
u\ge f$ $)$ on $\Omega\times J$ in the viscosity sense, if for any
$(x,t)\in\Omega\times J$ there is some neighborhood $Q_r(x,t)\subset\Omega\times J$ of
$(x,t)$ such that $u_t(x,t)-\cJ u(x,t)$ is well-defined and
$\vp_t(x,t)-\cJ v(x,t)\le f(x,t)$ $($ $\vp_t(x,t)-\cJ v(x,t)\ge
f(x,t)$ $)$ for $v=\vp\chi_{Q_r(x,t)}+u\chi_{Q_r^c(x,t)}$ whenever
$\vp\in\rC^2(Q_r(x,t))$ with $\vp(x,t)=u(x,t)$ and $\vp>u$ $($
$\vp<u$ $)$ on $Q_r(x,t)\s\{(x,t)\}$ exists. Also a function $u$is called as a viscosity solution if 
it is both a viscosity subsolution and a viscosity supersolution
to $u_t-\cJ u=f$ on $\Omega\times J$.
\end{definition}

\begin{thm}\label{thm-viscosity}
Let $f:\BR^n\times I\to\BR$ be a function. Then we have the
followings:

$(a)$ If $u:\BR^n\times I\to\BR$ is a function which is upper
semicontinuous on $\overline\Omega\times J$, then $u_t-\cI u\le f$ on $\Omega\times J$
in the viscosity sense if and only if $\vp_t(x,t)-\cI u(x,t)$
is well-defined and
\begin{equation}\label{eq-viscosity-sub}\vp_t(x,t)-\cI u(x,t)\le
f(x,t)\,\,\text{ for any $(x,t)\in\Omega\times J$ and
$\vp\in\rC^2_{\Omega\times J}(u;x,t)^+$.}
\end{equation}

$(b)$ If $u:\BR^n\times I\to\BR$ is a function which is lower
semicontinuous on $\overline\Omega\times J$, then $u_t-\cI u\ge f$ on $\Omega\times J$
in the viscosity sense if and only if $\vp_t(x,t)-\cI u(x,t)$
is well-defined and
\begin{equation}\label{eq-viscosity-super}\vp_t(x,t)-\cI u(x,t)\ge
f(x,t)\,\,\text{ for any $(x,t)\in\Omega\times J$ and
$\vp\in\rC^2_{\Omega\times J}(u;x,t)^-$.}
\end{equation}

$(c)$ If $u:\BR^n\times I\to\BR$ is a function which is continuous
on $\overline\Om$, then $u$ is a viscosity solution to $u_t-\cI u=f$
on $\Omega\times J$ if and only if it satisfies both \eqref{eq-viscosity-sub}
and \eqref{eq-viscosity-super}.
\end{thm}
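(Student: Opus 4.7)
Part (c) is the union of (a) and (b); part (b) follows from (a) by replacing ``above'' with ``below'' and reversing inequalities. I therefore focus on (a), whose proof is a parabolic adaptation of the standard equivalence argument for viscosity formulations of nonlocal operators from \cite{CS} (and its variants \cite{KL1, KL2, W1}). The one-sided time interval in $Q_r(x,t) = B_r(x)\times(t-r^\sm,t]$ plays no essential role since $\cI$ acts only in space and $t$ enters as a parameter.

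The engine of the argument is a monotonicity principle for $\cI$: whenever $w_1, w_2 \in \rC_x^{1,1}(x,t) \cap \rB(\BR^n\times\{t\})$ satisfy $w_1 \geq w_2$ with $w_1(x,t) = w_2(x,t)$, we have $\mu(w_1-w_2, x, y, t) = (w_1-w_2)(x+y,t) + (w_1-w_2)(x-y,t) \geq 0$, hence $\cM^-_{\fL}[w_1-w_2](x,t) \geq 0$, and the extremal inequality \eqref{eq-class} of Definition \ref{def-class} gives $\cI w_1(x,t) \geq \cI w_2(x,t)$. Given $\vp \in \rC^2_{\Om_J}(u;x,t)^+$ with touching neighborhood $U_{t,\dt}$, for any $r > 0$ with $Q_r(x,t) \subset U_{t,\dt}$ the patched function $v_{\vp,r} := \vp\chi_{Q_r(x,t)} + u\chi_{Q_r^c(x,t)}$ agrees with $\vp$ in a neighborhood of $(x,t)$ and is bounded on $\BR^n\times\{t\}$, so $v_{\vp,r} \in \rC_x^{1,1}(x,t) \cap \rB(\BR^n\times\{t\})$ and $\cI v_{\vp,r}(x,t)$ is well-defined by Definition \ref{def-class}(a). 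The expression $\cI u(x,t)$ in the theorem is interpreted via this patching procedure, with ``well-defined'' meaning $\cI v_{\vp,r}(x,t)$ is a real number --- automatic under the $\rC^{1,1}_x$-control supplied by $\vp$.

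The direction ($\Leftarrow$) is immediate: a $\vp \in \rC^2(Q_r(x,t))$ satisfying the strict-touching hypothesis of Definition \ref{def-viscosity} lies in $\rC^2_{\Om_J}(u;x,t)^+$ with $U_{t,\dt}:=Q_r(x,t)$, so the hypothesis of (a) --- with $\cI u(x,t)$ identified with $\cI v_{\vp,r}(x,t)$ --- reproduces the Definition \ref{def-viscosity} inequality. For ($\Rightarrow$), given an arbitrary $\vp \in \rC^2_{\Om_J}(u;x,t)^+$ with $U_{t,\dt}$, select $r_1 \leq \dt^{1/\sm}$ with $Q_{r_1}(x,t) \subset U_{t,\dt} \cap Q_{r_0}(x,t)$, where $Q_{r_0}$ is the reference neighborhood from Definition \ref{def-viscosity}. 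Construct a family of $\rC^2$-extensions $\widetilde\vp_k \in \rC^2(Q_{r_0})$ that agree with $\vp$ on $Q_{r_1/2}$, satisfy strict touching $\widetilde\vp_k > u$ on $Q_{r_0}\s\{(x,t)\}$, and such that the corresponding $v_{\widetilde\vp_k, r_0}$ converge pointwise to $v_{\vp, r_1/2}$ as $k \to \iy$ --- effected by $\widetilde\vp_k := \eta\vp + (1-\eta)P_k$, with a smooth spatial cutoff $\eta$ equal to $1$ on $Q_{r_1/2}$ and vanishing outside $Q_{r_1}$, together with a monotonic approximation $P_k \downarrow u$ furnished by the upper semicontinuity of $u$ on $\overline{\Om_J}$. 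Applying Definition \ref{def-viscosity} to each $\widetilde\vp_k$ yields $\vp_t(x,t) - \cI v_{\widetilde\vp_k, r_0}(x,t) \leq f(x,t)$; sending $k \to \iy$ via dominated convergence in \eqref{eq-pucci-integral} --- justified by the uniform $O(|y|^2)$ bound on $\mu$ near $y=0$ (inherited from the $\rC^2$-character of $\vp$ on $Q_{r_1/2}$) and the $\rL^\iy$-bound against \eqref{eq-kernel-bound} elsewhere --- recovers $\vp_t(x,t) - \cI v_{\vp, r_1/2}(x,t) \leq f(x,t)$, i.e.\ $\vp_t(x,t) - \cI u(x,t) \leq f(x,t)$ in the sense above.

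The principal technical hurdle is exactly this approximation step in the converse direction: the extensions $\widetilde\vp_k$ must satisfy strict touching on the reference $Q_{r_0}$ while the patched functions $v_{\widetilde\vp_k, r_0}$ collapse to the target $v_{\vp, r_1/2}$ in a manner admitting dominated convergence of the nonlocal integral. The $\rC^{1,1}_x$-control at $(x,t)$ inherited from $\vp$ supplies the integrable majorant, and the monotone $P_k \downarrow u$ provided by upper semicontinuity delivers the pointwise convergence; this is a direct parabolic analogue of the construction in \cite{CS}.
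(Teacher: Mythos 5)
The paper offers no proof of this theorem beyond the citation to \cite{CS, KL1}, so there is no internal argument to compare against; judged on its own terms, though, your forward direction has a genuine gap. Your approximating family $\widetilde\vp_k=\eta\vp+(1-\eta)P_k$ with $P_k\downarrow u$ does \emph{not} produce patched functions converging to $v_{\vp,r_1/2}$: on the annulus $Q_{r_1}(x,t)\s Q_{r_1/2}(x,t)$ the pointwise limit is $\eta\vp+(1-\eta)u$, which strictly exceeds $u=v_{\vp,r_1/2}$ wherever $\vp>u$ and $\eta>0$. So at best you obtain $\vp_t(x,t)-\cI w_\iy(x,t)\le f(x,t)$ for the \emph{larger} function $w_\iy=\eta\vp+(1-\eta)u$ (patched with $u$ outside). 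This does not imply the inequality for $v_{\vp,r_1/2}$, because the monotonicity you correctly derive from \eqref{eq-class} runs the wrong way: $w_\iy\ge v_{\vp,r_1/2}$ with equality at and near $(x,t)$ gives $\cI w_\iy(x,t)\ge\cI v_{\vp,r_1/2}(x,t)$, hence $\vp_t-\cI w_\iy\le\vp_t-\cI v_{\vp,r_1/2}$ --- the subsolution inequality gets \emph{harder}, not easier, as the comparison function decreases, and the entire content of the theorem is to push it all the way down.

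Relatedly, your reinterpretation of ``$\cI u(x,t)$'' as $\cI v_{\vp,r}(x,t)$ for a fixed $r>0$ discards the actual assertion. As in Lemma 4.3 of \cite{CS} (and as is needed later, e.g.\ for Theorem \ref{thm-2.0.4}), the claim is that the operator evaluated at $u$ \emph{itself} is well-defined and satisfies the inequality, using only the one-sided bound $\mu(u,x,y,t)\le\mu(\vp,x,y,t)\le C|y|^2$ near $y=0$ supplied by the test function. The missing step is the monotone limit in the patching radius: for $r\downarrow 0$ the functions $v_{\vp,r}$ decrease pointwise to $u$, so $\mu(v_{\vp,r},x,\cdot,t)\downarrow\mu(u,x,\cdot,t)$ with $\mu^+$ uniformly dominated by $C(|y|^2\wedge 1)$; monotone convergence in \eqref{eq-pucci-integral} then gives $\cI v_{\vp,r}(x,t)\to\cI u(x,t)\in[-\iy,\iy)$, and the uniform inequality $\cI v_{\vp,r}(x,t)\ge\vp_t(x,t)-f(x,t)$ both passes to the limit and rules out the value $-\iy$, i.e.\ it is what makes $\cI u(x,t)$ well-defined. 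You still need your cutoff construction to supply admissible test functions on the reference cylinder $Q_{r_0}(x,t)$ of Definition \ref{def-viscosity} (that part of your idea is sound, modulo making $P_k$ smooth and strictly above $u$), but it must be followed by this shrinking-radius monotone-convergence argument rather than the fixed-radius dominated-convergence step you propose.
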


\pf Refer to \cite{CS, KL1}.\qed

The following  comparison principle and stability of viscosity solutions come from \cite{CS} with minor changes as \cite{W1}.

\begin{thm}\label{thm-comparison} Let $\Om$ be a bounded domain in $\BR^n$ and let $\cJ$ be elliptic with respect to
a class $\fL$ in Definition \ref{def-class}. If
$u\in\rB(\BR^n\times J)$ is a viscosity subsolution to $\cJ u-u_t\ge f$ on
$\Om\times J$, $v\in\rB(\BR^n\times J)$ is a viscosity supersolution to $\cJ v-v_t\le f$
on $\Om\times J$ and $u\le v$ on $(\BR^n \times (0,T]\s \Om\times J)\cup (\BR^n\times \{0\})$, then $u\le v$ on
$\Om\times J$.
\end{thm}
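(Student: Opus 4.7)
The plan is to adapt the Crandall--Ishii doubling-of-variables argument to this parabolic nonlocal setting, in the spirit of \cite{CS} with time handled as in \cite{W1}. The structural condition (c) of Definition \ref{def-class}, which controls $\cJ u - \cJ v$ by the extremal operators $\cM^{\pm}_\fL$ applied to $u-v$, is the key substitute for the second-order Jensen--Ishii sum rule available in the local theory.

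First I would reduce to a strict supersolution by replacing $v$ with $\tilde v(x,t)=v(x,t)+\eta(t-\tau_1)$ for a small $\eta>0$. Since the symmetric kernels annihilate functions depending only on $t$, property (c) gives $\cJ\tilde v=\cJ v$, so $\tilde v_t-\cJ\tilde v\ge -f+\eta$ in the viscosity sense, while the inequalities on the parabolic exterior and initial layer are preserved. If the conclusion failed, $m:=\sup_{\Om\times J}(u-\tilde v)>0$ for all small $\eta$. I would then penalize via
\begin{equation*}
\Phi_\vep(x,y,t,s)=u(x,t)-\tilde v(y,s)-\frac{|x-y|^2}{\vep^2}-\frac{(t-s)^2}{\vep^2}
\end{equation*}
on $\overline\Om\times\overline\Om\times\overline J\times\overline J$, take a maximizer $(x_\vep,y_\vep,t_\vep,s_\vep)$, and rely on the standard penalization estimates $|x_\vep-y_\vep|^2/\vep^2+(t_\vep-s_\vep)^2/\vep^2\to 0$ and subsequential convergence to an interior point $(\hat x,\hat t)\in\Om\times J$ with $u(\hat x,\hat t)-\tilde v(\hat x,\hat t)=m$.

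Next I would use the quadratic test functions $\vp(x,t)=\tilde v(y_\vep,s_\vep)+|x-y_\vep|^2/\vep^2+(t-s_\vep)^2/\vep^2$ (touching $u$ from above at $(x_\vep,t_\vep)$ after an additive constant) and the symmetric $\psi$ (touching $\tilde v$ from below at $(y_\vep,s_\vep)$); both have time-derivative $2(t_\vep-s_\vep)/\vep^2$. Setting $w_1=\vp\chi_{Q_r(x_\vep,t_\vep)}+u\chi_{Q^c_r(x_\vep,t_\vep)}$ and the analogous $w_2$, Definition \ref{def-viscosity} yields
\begin{equation*}
\cJ w_1(x_\vep,t_\vep)-\cJ w_2(y_\vep,s_\vep)\ge\eta-|f(x_\vep,t_\vep)-f(y_\vep,s_\vep)|.
\end{equation*}
After translating $w_2$ spatially via $\widehat w_2(x,t)=w_2(x+y_\vep-x_\vep,t)$ so that its base point becomes $x_\vep$ (which commutes with $\cJ$ because the kernels depend on $(y,t)$ but not on $x$), and absorbing the time gap $|t_\vep-s_\vep|\to 0$ using the continuity in property (b), condition (c) reduces the left-hand side to $\cM^+_\fL[w_1-\widehat w_2](x_\vep,t_\vep)$. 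Splitting at radius $r$ controls this: inside $Q_r$ the smooth quadratic contributes $O(r^{2-\sm}/\vep^2)$ by direct computation of $\mu(\vp-\psi(\cdot+y_\vep-x_\vep,s_\vep),x_\vep,z,t_\vep)=4|z|^2/\vep^2$, while outside $Q_r$ the contribution is non-positive because the maximality of $\Phi_\vep$ at equal spatial shifts forces
\begin{equation*}
u(x_\vep+z,t_\vep)-\tilde v(y_\vep+z,s_\vep)\le u(x_\vep,t_\vep)-\tilde v(y_\vep,s_\vep).
\end{equation*}
Sending $r\to 0$ then $\vep\to 0$ yields $0\ge\eta>0$, the contradiction.

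The hard part is precisely this last step: one needs the matched shift $\widehat w_2$ so that the maximality of $\Phi_\vep$ produces the correct sign on $Q^c_r$, the spatial translation-invariance of $\cM^\pm_\fL$ so that the $Q_r$-contribution can be computed directly, and a careful treatment of the time-delay $|t_\vep-s_\vep|$ via continuity in $t$, which is the genuinely parabolic novelty absent in \cite{CS}. The remainder (existence of maximizers, constant adjustment of test functions, continuity of $f$) is routine bookkeeping.
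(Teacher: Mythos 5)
Your doubling-of-variables plan founders at exactly the step you flag as "the hard part", and in three concrete ways. First, Definition \ref{def-class} does \emph{not} make $\cJ$ translation invariant: property (c) only controls $\cJ a-\cJ b$ by $\cM^{\pm}_{\fL}[a-b]$ \emph{at a common point} $(x,t)$, and the kernels $K_{\ap\bt}(y,t)$ belong to the linear operators in $\fL$, not to $\cJ$ itself (e.g.\ $\cJ u=\cI u+c(x)$ with $c$ continuous satisfies (a)--(c) but does not commute with $\tau_h$). So the identity $\cJ w_2(y_\vep,s_\vep)=\cJ \widehat w_2(x_\vep,s_\vep)$ is unjustified, and without it you cannot collapse the two evaluations of $\cJ$ at different base points into $\cM^+_{\fL}[w_1-\widehat w_2]$ at one point. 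Second, the time mismatch is not absorbable by property (b): $\widehat w_2$ agrees with the shifted paraboloid only on $B_r(x_\vep)\times(s_\vep-r^{\sm},s_\vep]$, so at time $t_\vep>s_\vep$ it is merely the (lower semicontinuous, possibly very rough) $\tilde v$, hence not $\rC^{1,1}_x$ near $(x_\vep,t_\vep)$ and your "direct computation" $\mu=4|z|^2/\vep^2$ is not what the integrand is; moreover (b) is a qualitative continuity statement for a \emph{fixed} $\rC^{1,1}_x$ function and supplies no modulus uniform in $\vep$, while $|t_\vep-s_\vep|\to 0$ only as $\vep\to 0$. Third, the far-field sign is wrong as stated: the maximality of $\Phi_\vep$ was taken over $\overline\Om\times\overline\Om\times\overline J\times\overline J$, so the inequality $u(x_\vep+z,t_\vep)-\tilde v(y_\vep+z,s_\vep)\le u(x_\vep,t_\vep)-\tilde v(y_\vep,s_\vep)$ is available only when both shifted points stay in $\overline\Om\times\overline J$; for $|z|$ large the exterior ordering gives $u(x_\vep+z,t_\vep)\le\tilde v(x_\vep+z,t_\vep)$, which does not dominate $\tilde v(y_\vep+z,s_\vep)$ up to the needed error because outside $\Om\times J$ the data are only \emph{bounded}, with no semicontinuity or modulus, so the oscillation of $\tilde v$ over the displacement $(x_\vep-y_\vep,t_\vep-s_\vep)$ can be of order $\|v\|_\iy$, not $o(1)$.

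These are not bookkeeping issues; they are the reason the paper does not argue this way. The intended proof (cited from \cite{CS} with the parabolic modifications of \cite{W1}) goes through Theorem \ref{thm-2.0.4}: one first shows $\cM^+_{\fL}[u-v]-(u-v)_t\ge f-g$ in the viscosity sense, a statement proved by approximating $u$ and $v$ with the $\vep$-envelopes of Section \ref{sec-abp} (the parabolic sup/inf-convolutions), which are touchable a.e.\ so that (c) can be applied pointwise at a \emph{single} $(x,t)$ -- no translation of $\cJ$, no two-point or two-time comparison ever occurs. Comparison then follows from an elementary maximum principle for $\cM^+_{\fL}w-w_t\ge 0$ with $w=u-\tilde v\le 0$ outside $\Om\times J$ (your $\eta(t-\tau_1)$ strictness trick is fine and is the standard first step): at a positive interior parabolic maximum the constant test function gives $\cM^+_{\fL}$ of the truncated $w$ both $\ge 0$ and $\le 0$, forcing $\mu(w,x_0,\cdot,t_0)=0$ a.e.\ and contradicting $w\le 0$ outside the bounded $\Om$. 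If you want to salvage a doubling proof, you would have to add hypotheses the theorem does not grant (translation invariance of $\cJ$, global semicontinuity of $u,v$ on $\BR^n\times\overline J$, and a continuity-in-$t$ structure), so you should instead follow the envelope route.
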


\begin{thm}\label{thm-2.0.4} Let $\Om$ be a bounded domain in $\BR^n$ and let $\cJ$ be elliptic with respect to a
class $\fL$ as in Definition \ref{def-class}. If
$u\in\rB(\BR^n\times I)$ is a viscosity subsolution to $\cJ u-u_t\ge
f$ on $\Omega\times J$ and $\,v\in\rB(\BR^n\times I)$ is a viscosity
supersolution to $\cJ v-v_t\le g$ on $\Omega\times J$ where $\Omega\times J=\Om\times
J\subset\BR^n\times I$, then $\cM^+_{\fL}[u-v]-(u_t-v_t)\ge f-g$ on
$\Omega\times J$ in the viscosity sense.
\end{thm}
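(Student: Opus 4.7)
The plan is to follow the argument from \cite{CS,W1} for the corresponding result in the elliptic nonlocal and parabolic local settings, reducing everything to property $(c)$ of Definition \ref{def-class}. Fix $(x_0,t_0)\in\Om\times J$ and a test function $\vp\in\rC^2_{\Om_J}(u-v;x_0,t_0)^+$, and let $Q_r(x_0,t_0)\subset\Om\times J$ be a parabolic cylinder on which $\vp$ touches $w:=u-v$ from above. Setting $\hat w=\vp\chi_{Q_r(x_0,t_0)}+w\chi_{Q_r^c(x_0,t_0)}$, the goal is
\begin{equation*}
\vp_t(x_0,t_0)-\cM^+_{\fL}\hat w(x_0,t_0)\ge f(x_0,t_0)-g(x_0,t_0).
\end{equation*}

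The main device is the parabolic sup/inf-convolutions
\begin{equation*}
u^\vep(x,t)=\sup_{(y,s)}\bigl\{u(y,s)-\tfrac{1}{\vep}\bigl(|x-y|^2+|t-s|^2\bigr)\bigr\},\quad v_\vep(x,t)=\inf_{(y,s)}\bigl\{v(y,s)+\tfrac{1}{\vep}\bigl(|x-y|^2+|t-s|^2\bigr)\bigr\}.
\end{equation*}
These are semi-convex and semi-concave respectively, hence a.e.\ classically $\rC^{1,1}_x$ and differentiable in $t$; they decrease (resp.\ increase) to $u,v$ uniformly on compact sets as $\vep\to 0^+$; and, by the arguments already used in the nonlocal elliptic case \cite{CS} and the parabolic local case \cite{W1}, they are viscosity sub-/supersolutions of the perturbed equations $\cJ u^\vep-u^\vep_t\ge f-\rho(\vep)$ and $\cJ v_\vep-v_{\vep,t}\le g+\rho(\vep)$ on a slightly shrunk subdomain $\Om_J^\vep\subset\Om\times J$, with $\rho(\vep)\to 0$. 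At every point of $\Om_J^\vep$ where both $u^\vep$ and $v_\vep$ are pointwise $\rC^{1,1}_x$, the inequalities hold classically, and subtracting together with property $(c)$ of Definition \ref{def-class} gives
\begin{equation*}
\cM^+_{\fL}[u^\vep-v_\vep]-(u^\vep-v_\vep)_t\ge\cJ u^\vep-\cJ v_\vep-(u^\vep_t-v_{\vep,t})\ge f-g-2\rho(\vep)\quad\text{a.e.}
\end{equation*}

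Then I would upgrade this almost-everywhere inequality for $w^\vep:=u^\vep-v_\vep$ to a viscosity inequality via the Jensen/Alexandrov lemma adapted to this class of nonlocal operators, exactly as in \cite{CS} (the parabolic adaptation is as in \cite{W1}). Finally, using the stability of viscosity subsolutions with respect to locally uniform convergence, which was recorded at Section \ref{sec-pre}, I pass to the limit $\vep\to 0$ and evaluate at the touching point $(x_0,t_0)$ with the prescribed test function $\vp$, obtaining the desired viscosity inequality for $w$.

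The main obstacle is the mismatch between the local-in-$t$ and nonlocal-in-$x$ structure during regularization: the sup-convolution mixes nearby times, so one must verify that the perturbed inequalities for $u^\vep,v_\vep$ are valid when the nonlocal operator is evaluated on data extending outside $\Om_J^\vep$. This is handled as in \cite{CS,W1} by shrinking the admissible domain by $O(\sqrt{\vep})$ and using uniform boundedness of $u,v\in\rB(\BR^n\times I)$ together with the integrability condition \eqref{eq-kernel-cond}, which controls the tail contribution of $|y|\gtrsim 1$ in $\cM^{\pm}_{\fL}$ uniformly in $\vep$.
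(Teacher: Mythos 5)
Your proposal is correct and follows essentially the route the paper itself takes: the paper gives no written proof of this theorem, deferring to the sup/inf-convolution argument of \cite{CS} (Theorem 5.9 there) with the parabolic modifications of \cite{W1}, which is exactly the regularize--subtract--apply-property-$(c)$--Jensen--pass-to-the-limit scheme you describe (the paper's own one-sided $\vep$-envelope of Section \ref{sec-abp} plays the role of your sup-convolution). One small slip: your first displayed ``goal'' has the wrong sign --- it should read $\cM^+_{\fL}\hat w(x_0,t_0)-\vp_t(x_0,t_0)\ge f(x_0,t_0)-g(x_0,t_0)$, consistent with Definition \ref{def-viscosity} and with the a.e.\ inequality you correctly derive later.
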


\begin{thm}\label{thm-stability} Let $\cJ$ be elliptic as Definition \ref{def-class}.
If $\,\{u_k\}\subset\rB(\BR^n\times J)$ is a sequence of
viscosity solutions to $\cJ u_k-\partial_t u_k=f_k$ on $\Om\times J$ such that

$(a)$ $\{u_k\}$ and $\{f_k\}$ converge to $u$ and $f$ locally
uniformly on $\Om\times J$, respectively,

$(b)$ $\{u_k\}$ converges to $u$ $\aee$ on $\BR^n \times J$,

\noindent then $u$ is a viscosity solution to $\cJ u-\partial_t u=f$ on
$\Om\times J$.\end{thm}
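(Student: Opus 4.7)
I shall verify that $u$ is a viscosity subsolution; the supersolution bound follows symmetrically, and together they give that $u$ is a viscosity solution. Fix $(x_0,t_0)\in\Om\times J$ and a test function $\varphi\in\rC^2_{\Om\times J}(u;x_0,t_0)^+$. After replacing $\varphi$ by $\varphi+\eta(|x-x_0|^2+(t-t_0)^2)$, which preserves the tangency at $(x_0,t_0)$, I may assume $u-\varphi$ has a \emph{strict} local maximum at $(x_0,t_0)$ on some closed cylinder $\overline{Q}_\rho(x_0,t_0)\subset\Om\times J$; the general case will be recovered by $\eta\to 0^+$ since all relevant quantities depend continuously on the test function. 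By hypothesis~(a), $u_k\to u$ uniformly on $\overline{Q}_\rho(x_0,t_0)$, so a standard touching-point argument produces $(x_k,t_k)\in Q_\rho(x_0,t_0)$ at which $u_k-\varphi$ attains its maximum on $\overline{Q}_\rho$, with $(x_k,t_k)\to(x_0,t_0)$. Setting $\varphi_k:=\varphi+[u_k(x_k,t_k)-\varphi(x_k,t_k)]$ (with an additional infinitesimal bump to ensure strict inequality $\varphi_k>u_k$ off the tangent point if necessary), we obtain a legitimate $C^2$ test function from above for $u_k$ on $Q_r(x_k,t_k)$ for any fixed $r<\rho/2$. The subsolution property of $u_k$ thus gives
\begin{equation*}
\partial_t\varphi(x_k,t_k)-\cJ v_k(x_k,t_k)\le f_k(x_k,t_k),
\end{equation*}
where $v_k:=\varphi_k\chi_{Q_r(x_k,t_k)}+u_k\chi_{Q_r^c(x_k,t_k)}$.

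The time derivative and right-hand side converge respectively to $\partial_t\varphi(x_0,t_0)$ and $f(x_0,t_0)$. The crucial step is to show $\cJ v_k(x_k,t_k)\to\cJ v(x_0,t_0)$ with $v:=\varphi\chi_{Q_r(x_0,t_0)}+u\chi_{Q_r^c(x_0,t_0)}$; by the ellipticity property in Definition~\ref{def-class}(c), combined with the continuity of $\cJ v$ near $(x_0,t_0)$ from Definition~\ref{def-class}(b), this reduces to the vanishing of $\cM^\pm_{\fL}[v_k-v](x_k,t_k)$. I split the defining spatial integral into three regions: $B_\varepsilon$, $B_R\s B_\varepsilon$, and $B_R^c$. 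On $B_\varepsilon$, the $C^2$-regularity of $\varphi_k$ and $\varphi$ yields $|\mu|\le C|y|^2$, which combined with the kernel bound \eqref{eq-kernel-bound} gives a contribution of order $\varepsilon^{2-\sm}$ uniformly in $k$ and in $\cL\in\fL$. On $B_R\s B_\varepsilon$, the uniform convergence $\varphi_k\to\varphi$ and the bounded a.e.\ convergence $u_k\to u$ (hypothesis~(b)), together with the uniform integrable majorant $C\|u\|_\iy\,|y|^{-n-\sm}$, force vanishing as $k\to\iy$ by dominated convergence, uniformly in $\cL$. On $B_R^c$, boundedness of $\|u_k\|_\iy$ and $\|u\|_\iy$ bounds the contribution by $C\|u\|_\iy R^{-\sm}$. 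Sending first $k\to\iy$, then $R\to\iy$, then $\varepsilon\to 0^+$, and finally $\eta\to 0^+$, produces $\partial_t\varphi(x_0,t_0)-\cJ v(x_0,t_0)\le f(x_0,t_0)$, which is the desired subsolution inequality at $(x_0,t_0)$.

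The main obstacle is the \emph{uniformity in $\cL\in\fL$} of the convergence of the nonlocal integrals. Because $\cJ$ is sandwiched between the Pucci extremal operators $\cM^\pm_{\fL}$ via \eqref{eq-class}, and these are suprema/infima over the whole class $\fL$, a merely pointwise limit along each individual $\cL$ would not transfer to $\cJ$. The common ellipticity bound \eqref{eq-kernel-bound} supplies the uniform integrable majorant needed for dominated convergence, while hypothesis~(b)---a.e.\ convergence on all of $\BR^n\times J$ rather than only locally on compacts as in~(a)---is exactly what is needed to control the nonlocal tail after the spatial shift $x_0\mapsto x_k$, which does not a priori preserve pointwise convergence of the translated sequence $u_k(\cdot+x_k,t_k)$.
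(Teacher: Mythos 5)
Your overall strategy is the one the paper itself points to (the stability lemma of \cite{CS}, adapted to the parabolic setting as in \cite{W1}): transfer the touching point to the approximating solutions via local uniform convergence, use property $(c)$ of Definition \ref{def-class} together with continuity of $\cJ v$ to reduce $\cJ v_k(x_k,t_k)\to\cJ v(x_0,t_0)$ to $\cM^{\pm}_{\fL}[v_k-v](x_k,t_k)\to 0$, and split the integral into $B_\varepsilon$, an annulus, and a tail, with the near-origin contribution of order $\varepsilon^{2-\sm}$ uniformly in $\cL\in\fL$. Those parts are sound (near the origin $v_k-v$ is eventually the constant $u_k(x_k,t_k)-\vp(x_k,t_k)$, so its second difference even vanishes there), and you correctly identify that the sandwich \eqref{eq-class} plus the common kernel bound \eqref{eq-kernel-bound} is what makes the limit uniform over the class.

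The genuine gap is in the dominated-convergence step on $B_R\s B_\varepsilon$ and in the tail, and your closing paragraph asserts the opposite of what is true there. The nonlocal integrals are evaluated on the time slices $t=t_k$, and each slice $\{t=t_k\}$ is a null set in $\BR^n\times J$; hypothesis $(b)$, a.e.\ convergence on $\BR^n\times J$, therefore gives no pointwise information about $y\mapsto u_k(x_k+y,t_k)-u(x_k+y,t_k)$, nor about $u(\cdot,t_k)-u(\cdot,t_0)$ outside $\Om$, where $u$ is only known to be bounded and no continuity in $t$ is available (the equation and hypothesis $(a)$ live only on $\Om\times J$). So the integrand need not tend to $0$ for a.e.\ $y$, and DCT does not apply as written. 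In the elliptic case of \cite{CS} the only such difficulty is the spatial shift $x_0\mapsto x_k$, which is absorbed by the change of variables $z=x_k+y$ and the fixed weight $\min\bigl(\varepsilon^{-n-\sm},|z-x_0|^{-n-\sm}\bigr)$; the moving time slice has no analogous fix under $(b)$ alone. Closing the gap requires either strengthening $(b)$ (e.g.\ $u_k(\cdot,t)\to u(\cdot,t)$ in $L^1$ with weight $(1+|z|)^{-n-\sm}$, locally uniformly in $t$, which is how parabolic nonlocal stability results are usually phrased) or an additional equicontinuity-in-time argument for the exterior data; neither appears in your proposal. A smaller omission of the same kind: your dominating function and the bound $C\|u\|_\iy R^{-\sm}$ for the far tail use a uniform-in-$k$ bound on $\|u_k\|_{L^\iy(\BR^n\times J)}$, which is not literally among the hypotheses ($\{u_k\}\subset\rB(\BR^n\times J)$ only says each $u_k$ is bounded) and should be stated explicitly or derived before it is used.
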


\section{ A nonlocal Alexandroff-Bakelman-Pucci estimate}\label{sec-abp}
\subsection{$\vep $-envelope and Monge-Amp\'ere measure}
We employ the concept of $\vep$-envelope given at \cite{W1}.
\begin{definition}
$(i)$ Define the Minkowski sum $A+B$ of two sets $A$ and $B$ in
$\R^{n+1}$ as $A+B=\{x+y:\, x\in A, y\in B\}.$

$(ii)$ Set $\{0\}^{\vep}:=\{(x,t,z)\in\BR^n\times\BR\times\BR:\,
|x|^2-t+|z|^2\leq\vep^2\text{ for $t\leq 0$}\,\}$ and
$A^{\vep}:=A+\{0\}^{\vep}.$

$(iii)$ For $G(u):=\{(x,t,z)\in\BR^n\times\BR\times\BR:\, z\leq
u(x,t)\}$, we define the upper $\vep$-envelope of $u$ as
$u^{\vep}(x,t)=\sup\{u: (x,t,z)\in G(u)^{\vep}\}$ and lower
$\vep$-envelope of $u$ as $u_{\vep}=-(-u)^{\vep}$.
\end{definition}
Then it has the following properties.

\begin{lemma}[\cite{W1}] Let $u$ be a viscosity solution of $u_t=\cI[u]$ for
$\cI\in\fL$. Then we have the following properties;

$(i)$ $u^{\vep}\in C(D)$ and $u^{\vep}\downarrow u$ uniformly in $D$
as $\vep\rightarrow 0$.

$(ii)$ $u^{\vep}$ is a viscosity subsolution.

$(iii)$ For any $(x_0,t_0)\in D$, there is a concave parabolic
paraboloid of opening $2/\vep$ that touches $u^{\vep}$ at
$(x_0,t_0)$ from below in $D$. So $u^{\vep}$ is $C^{1,1}$ by below
in $D$. In particular, the lower bounds of $D^2_x u^{\vep}$ and
$-u^{\vep}_t$ are well defined in $D$ and $u^{\vep}\in
C_{x,-}^{1,1}(x,t)\cap C_t^{0,1}(x,t)\, a.e. $ in $D$.
\end{lemma}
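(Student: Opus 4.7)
The plan is to eliminate the vertical coordinate in the Minkowski sum and rewrite
\[u^\vep(x,t)=\sup\bigl\{u(x',t')+w:\ |x-x'|^2+(t'-t)+w^2\le\vep^2,\ t'\ge t,\ w\ge 0\bigr\},\]
so that $u^\vep$ is the upper envelope of $G(u)$ inflated by the parabolic cap $\{0\}^\vep$. Crucially, $\{0\}^\vep$ only reaches into $t'\ge t$, which is compatible with the causal direction of the parabolic operator $\pa_t-\cI$. Each of $(i),(ii),(iii)$ will then be extracted from this single formula.

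\textbf{Subsolution property $(ii)$.} For each fixed admissible triple $(y,\tau,w)\in\{0\}^\vep$, the translated function $(x,t)\mapsto u(x-y,t-\tau)+w$ is itself a viscosity subsolution of $\pa_tv-\cI v=0$: the kernels in \eqref{eq-pucci-integral} are translation invariant in $x$, the operator is local in $t$, and addition of a constant does not alter the equation. The supremum of a family of viscosity subsolutions is, after upper semicontinuous envelope, itself a subsolution---a standard stability statement provable as in Theorem~\ref{thm-stability} (see also \cite{CS,W1}). Applied to our family this yields $(ii)$.

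\textbf{Touching paraboloid $(iii)$.} Fix $(x_0,t_0)\in D$. By upper semicontinuity of $u$ and compactness of the admissible set, the supremum defining $u^\vep(x_0,t_0)$ is attained at some $(x^*,t^*,w_0)$ saturating $|x_0-x^*|^2+(t^*-t_0)+w_0^2=\vep^2$. Re-testing $u^\vep(x,t)$ with the \emph{same} contact point $(x^*,t^*)$ yields the barrier
\[u^\vep(x,t)\ge u(x^*,t^*)+\sqrt{\vep^2-|x-x^*|^2+(t-t^*)}=:P(x,t)\]
on a parabolic neighbourhood of $(x_0,t_0)$, with equality at the center. The barrier $P$ is smooth; a direct computation gives $\pa_tP(x_0,t_0)=1/(2w_0)$, $\nabla_xP(x_0,t_0)=-(x_0-x^*)/w_0$, and
\[\nabla_x^2P(x_0,t_0)=-\frac{I}{w_0}-\frac{(x_0-x^*)(x_0-x^*)^\top}{w_0^3}.\]
Combining this with $w_0\le\vep$ and the saturation identity converts $P$ into a concave parabolic paraboloid of opening $2/\vep$ touching $u^\vep$ from below at $(x_0,t_0)$. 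Aleksandrov's theorem in $x$, together with the one-sided time bound from $\pa_tP$, then yields the almost-everywhere existence of $D^2_xu^\vep$ and of $-\pa_tu^\vep$ in $D$, giving $u^\vep\in C^{1,1}_{x,-}\cap C^{0,1}_t$ a.e.

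\textbf{Continuity, uniform convergence $(i)$, and the main obstacle.} Upper semicontinuity of $u^\vep$ follows from closedness of $G(u)^\vep=G(u)+\{0\}^\vep$ (closed plus compact is closed), while lower semicontinuity is immediate from the barrier $P$ of $(iii)$. Monotonicity $u^{\vep_1}\ge u^{\vep_2}$ for $\vep_1\ge\vep_2$ is immediate from $\{0\}^{\vep_2}\subset\{0\}^{\vep_1}$; pointwise convergence $u^\vep\downarrow u$ follows from upper semicontinuity of $u$ by inserting $(x',t',w)=(x,t,0)$ in the admissible set; Dini's theorem then upgrades this to uniform convergence on compact subsets of $D$. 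The hard point---and the source of the parabolic/elliptic discrepancy---is achieving the uniform opening $2/\vep$ in $(iii)$: the naive bound $\nabla_x^2P\le -I/w_0$ blows up when the maximizer sits near the equator of $\{0\}^\vep$ (i.e.\ $w_0$ small), and the one-sided time window $t'\ge t$ prevents the usual elliptic remedy of simply moving the contact point. Following \cite{W1}, I would perturb the maximizer tangentially along $\pa\{0\}^\vep$ to push $w_0$ bounded below while preserving the optimal value, and then shrink the neighbourhood accordingly, obtaining the uniform opening $2/\vep$ throughout $D$.
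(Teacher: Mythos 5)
The paper gives no proof of this lemma at all (it is quoted from \cite{W1}), so your argument has to stand on its own, and most of it does. The rewriting of $u^\vep$ as a supremum of the translates $u(\cdot-\xi,\cdot-\tau)+w$ over the compact cap, upper semicontinuity from ``closed plus compact'', lower semicontinuity from the cap barrier, monotonicity in $\vep$ and Dini for part (i), and the subsolution property (ii) via ``a supremum of subsolutions is a subsolution'' are all sound, with two small caveats: the latter is a Perron-type lemma as in \cite{CS}, not the stability statement of Theorem \ref{thm-stability} (which concerns limits, not suprema); and since the kernels $K(y,t)$ are allowed to depend on $t$, a time-translate of $u$ is in general only a subsolution of $u_t=\cM^+_{\fL}u$, which is what one actually needs here and should be said.

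Part (iii) contains a genuine gap, exactly where you flag it, and your sketched repair does not close it. At a contact point with $w_0\ll\vep$ (maximizer near the equator of $\{0\}^\vep$) your own computation gives a touching paraboloid with Hessian of size $\vep^2/w_0^3$ and $\pa_tP=1/(2w_0)$, so nothing of order $1/\vep$ comes out; the intermediate sentence ``combining this with $w_0\le\vep$ \dots converts $P$ into a paraboloid of opening $2/\vep$'' goes the wrong way, since $w_0\le\vep$ only gives $1/w_0\ge 1/\vep$. Nor can $w_0$ be bounded below by $c\,\vep$: the only available inequality, $u(x^*,t^*)+w_0\ge u(x_0,t_0)+\vep$, controls $w_0$ only if $u$ has a Lipschitz-type modulus at scale $\vep$, and at this stage $u$ is merely continuous. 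The proposed remedy of sliding the maximizer along $\pa\{0\}^\vep$ ``while preserving the optimal value'' is not available: away from a maximizer the value $u(x',t')+w$ in general strictly decreases, and moving toward the pole changes $u(x',t')$ uncontrollably, so you cannot force $w_0\gtrsim\vep$. Since the a.e.\ conclusion $u^\vep\in \rC^{1,1}_{x,-}\cap \rC^{0,1}_t$ requires a locally \emph{uniform} semiconvexity constant before an Aleksandrov-type theorem can be invoked (pointwise touching with point-dependent opening is not enough), both halves of (iii) remain unproved. The standard way out is to obtain (iii) from the parabolic quadratic sup-convolution, e.g.\ $\sup\{u(y,s)-\tfrac1\vep(|x-y|^2+(s-t)):\ s\ge t\}$: for fixed $t$ this plus $\tfrac1\vep|x|^2$ is a supremum of affine functions of $x$, hence convex, giving opening $2/\vep$ at every point, and the one-sided bound $-u^\vep_t\ge -1/\vep$ follows by comparing admissible sets at earlier times; with the cap as literally defined here, a uniform opening at every point of $D$ would require an a priori Lipschitz bound on $u$ that you do not have.
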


\begin{definition}
Let $u$ be a convex function in $\Omega\subset \R^n$. Then the {\rm
Monge-Amph\`ere} measure $M_u$ corresponding to the function $u$ is
defined as
$$M_u(A)=\int_A\det (D^2 u)\,dx$$
for a subset $A\subset \Omega$.
\end{definition}

\begin{lemma}[Chapter VII,\cite{D}]
Let $u_k$ be convex functions satisfying $u_k\rightarrow u$
pointwise in a convex domain $\Omega$ and let $A_k\rightarrow A$ as
$k\rightarrow \infty$ where $A_k$'s and $A$ are bounded closed
subsets of $\Om$. Then we have that
$$\limsup_{k\rightarrow\infty}M_{u_k}(A_k)\leq M_u(A)$$ and
$$\lim_{k\rightarrow\infty}\int_{\Omega}f\,dM_{u_k}= \int_{\Omega}
f\,dM_{u}$$ for $f\in C^0_c(\Omega)$.
\end{lemma}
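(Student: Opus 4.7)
The plan is to work with the alternative definition $M_u(E) = |\pa u(E)|$, where $\pa u$ denotes the subdifferential map of a convex function; this agrees with $\int_E \det(D^2 u)\,dx$ by the area formula when $u \in C^2$, and the identification extends to general convex $u$ via Alexandrov's theorem on twice-differentiability a.e. Pointwise convergence $u_k \to u$ of convex functions on the convex domain $\Om$ upgrades automatically to locally uniform convergence on compact subsets of $\Om$ and yields uniform local Lipschitz bounds on the $u_k$; this is a classical consequence of convexity that I would invoke throughout.

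For the semicontinuity inequality $\limsup_k M_{u_k}(A_k) \le M_u(A)$, I would proceed in four steps. First, fix $\vep > 0$ small and set $A^{\vep} := \{ x \in \Om : \dist(x,A) \le \vep\}$, a compact subset of $\Om$; the Hausdorff convergence $A_k \to A$ forces $A_k \subset A^{\vep}$ for all large $k$. Second, prove the set-theoretic Kuratowski upper inclusion $\limsup_k \pa u_k(A_k) \subset \pa u(A^{\vep})$: if $p$ is a limit of $p_k \in \pa u_k(x_k)$ with $x_k \in A_k$, the supporting affine functions $L_k(y) = u_k(x_k) + p_k \cdot (y - x_k) \le u_k(y)$ pass to the limit by locally uniform convergence, giving $p \in \pa u(x)$ for some $x \in A^{\vep}$. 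Third, apply a Fatou-type lemma for Lebesgue measure, using that the subgradient images are uniformly bounded by the Lipschitz bound, to deduce $\limsup_k |\pa u_k(A_k)| \le |\overline{\pa u(A^{\vep})}| \le M_u(A^{\vep})$. Fourth, let $\vep \downarrow 0$ and use outer regularity of the Radon measure $M_u$ to conclude $M_u(A^{\vep}) \downarrow M_u(A)$.

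For the weak-convergence statement $\int f\,dM_{u_k} \to \int f\,dM_u$ with $f \in C^0_c(\Om)$, the uniform Lipschitz bounds first give $M_{u_k}(K) \le C(K)$ on compacts $K \subset \Om$, so $\{M_{u_k}\}$ is vaguely precompact. I would then establish the dual inequality $\liminf_k M_{u_k}(V) \ge M_u(V)$ for open $V \subset \Om$ by a mirror argument: given $p \in \pa u(x)$ with $x \in V$, the affine support of $u$ at $x$ can be slightly perturbed so as to be realized as a subgradient of some $u_k$ at a nearby point still in $V$, with standard bookkeeping of overlaps. Combining the $\limsup$ on closed sets with the $\liminf$ on open sets and invoking the Portmanteau theorem, every vague cluster point of $\{M_{u_k}\}$ equals $M_u$; hence the full sequence converges weakly, and integration against $f \in C^0_c(\Om)$ follows.

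The main obstacle is the second step of the first part: passing from the Kuratowski set inclusion $\limsup_k \pa u_k(A_k) \subset \pa u(A^{\vep})$ to a Lebesgue-measure inequality is delicate, because $\pa u(A^{\vep})$ is not a Hausdorff neighborhood of $\pa u(A)$, and subgradient sets of convex functions can carry nontrivial $n$-dimensional mass at kinks of $u$. The resolution relies on the outer regularity of $M_u$ used in step four to shrink $A^{\vep}$ back to $A$ in the limit. The companion lower bound needed for weak convergence is the most delicate technical point, since it must faithfully reproduce the full subdifferential of $u$—including contributions from flat affine pieces—in the vague limit of the $M_{u_k}$.
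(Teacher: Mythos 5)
This lemma is not proved in the paper at all: it is quoted from Chapter VII of \cite{D}, so there is no internal argument to compare against. Your outline is the classical weak-continuity proof for Monge--Amp\`ere (normal-mapping) measures: pointwise convergence of convex functions upgrades to locally uniform convergence with uniform local Lipschitz bounds; upper semicontinuity on compacts via the closed-graph/Kuratowski inclusion plus a reverse-Fatou argument; a lower bound on open sets; and identification of vague cluster points. In outline this is sound and is essentially the standard route taken in the literature, including the cited source.

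Two points need repair before it is a proof. First, your justification of the identification $M_u(E)=|\partial u(E)|$ is false as stated: Alexandrov's a.e.\ twice-differentiability does \emph{not} give $\int_E\det(D^2u)\,dx=|\partial u(E)|$ for general convex $u$, since the a.e.\ Hessian only captures the absolutely continuous part of the Alexandrov measure. (In one dimension take $u(x)=|x|$: $u''=0$ a.e., yet $|\partial u(\{0\})|=2$; smooth convex approximations $u_k$ then show the lemma would actually be \emph{false} under the literal a.e.-Hessian reading of the paper's definition.) The statement is correct precisely when $M_u$ is taken to be the Alexandrov measure $E\mapsto|\partial u(E)|$, i.e.\ when $\det(D^2u)$ is interpreted as a measure, which is what \cite{D} and the paper implicitly intend; you should adopt this as the definition rather than claim equivalence via Alexandrov's theorem. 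Second, the lower bound $\liminf_k M_{u_k}(V)\ge M_u(V)$ for open $V$, which you rightly single out as the delicate step, is only gestured at (``slightly perturbed \dots bookkeeping of overlaps''). The precise mechanism is: the set $S$ of slopes attained by $u$ at two or more points of $\Omega$ has Lebesgue measure zero (the same fact that makes $|\partial u(\cdot)|$ countably additive); for a compact $K\subset V$ and $p\in\partial u(x)\setminus S$ with $x\in K$, the minimum of $u(\cdot)-p\cdot(\cdot)$ over $\overline V$ is attained only at $x$, so by locally uniform convergence the minimizers of $u_k(\cdot)-p\cdot(\cdot)$ over $\overline V$ converge to $x$ and lie in $V$ for large $k$, whence $p\in\partial u_k(V)$; Fatou then gives $\liminf_k|\partial u_k(V)|\ge|\partial u(K)\setminus S|=M_u(K)$, and inner regularity finishes. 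With these two repairs, together with the routine vague-compactness identification you describe, the argument is complete.
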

\subsection{Concave envelope and normal map}
We now define concave envelopes of a function $u$ defined on
$\BR^n\times I$ and furnish their properties below. For $r>0$, we
set $Q_r=Q_r(0,0)$.

\begin{definition}\label{def-gamma}  Let $u:\BR^n\times I\to\BR$ be a function which is
not positive on $\partial_p^* Q_{r/2}$ and is upper semicontinuous
on $\overline Q_r$.

$(i)$ $u(x,t)$ is called concave in $\BR^n\times I$ if $u(x,t)$ is
concave in $x$ and nondecreasing in $t$.

$(ii)$ The concave envelope $\Gm(y,s)$  of $u$ in $Q_{2r}$ is
defined as
$$\Gm(y,s)=\begin{cases}\inf\{v(y,s): v\in\Pi,\,v>u^+\text{ in
$Q_{2r}$}\}&\text{ in $Q_{2r}$ }\\
0 &\text{ in $\partial_p^* Q_{2r}$,}\end{cases}$$ where $\Pi$ is
the family of all concave functions $v$ in $Q_{2r}$ such that
$v\leq 0$ on $\partial_p Q_{2r}$.

$(iii)$ The {\it normal map} $\fN_u$ of $u:Q\rightarrow\BR$ is given
by
\begin{equation}
\begin{split}
\fN_u(x,t)=\{(p,h)\in \BR^n\times (\tau_1,t]:\,\, & u(y,s)\leq u(x,t)+p\cdot (y-x),\,\forall (y,s)\in Q,\\
&\qquad\,\,\text{and}\,\,u(x,t)-p\cdot x=h\}\end{split}
\end{equation}
\end{definition}

\begin{lemma}[Chapter VII,\cite{D}] Let $u:\BR^n\times I\to\BR$ be a function which is
not positive on $\partial_p^* Q_{r/2}$ and is upper semicontinuous on
$\overline Q_r$. Then we have the followings;

$(i)$ if $u\in C(\overline Q_r)$, then its concave envelope $\Gm$ is
nondecreasing in $t$ and $\Gm\in C(\overline Q_r)$.

$(ii)$ if $u\in Lip(\overline Q_r)$, then $\Gm\in Lip(\overline
Q_r)$ and $\|\Gm\|_{ Lip(\overline Q_r)}\leq (n+1)\|u\|_{
Lip(\overline Q_r)}$.
\end{lemma}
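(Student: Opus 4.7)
The plan is to exploit three standard observations from convex analysis, adapted to the mixed concavity-in-$x$/monotonicity-in-$t$ setting of $\Pi$. First, every defining property of $\Pi$ --- concavity in $x$, monotonicity in $t$, nonpositivity on $\pa_p Q_{2r}$, and pointwise domination of $u^+$ --- is preserved under pointwise infimum of an arbitrary subfamily (for concavity this is the elementary bound $\inf_i[\ld f_i(x)+(1-\ld)f_i(y)]\ge \ld\inf_i f_i(x)+(1-\ld)\inf_i f_i(y)$, applied to each line segment in $B_{2r}$). Consequently $\Gm$ itself lies in $\Pi$ and is its minimum element, so both parts of the lemma reduce to showing that this minimum element inherits continuity and Lipschitz regularity from $u$.

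For part (i), monotonicity in $t$ follows because every $v\in\Pi$ is nondecreasing in $t$ by Definition \ref{def-gamma}(i), and pointwise infima of nondecreasing families remain nondecreasing. For continuity: since $\Gm(\cdot,t)$ is concave on the convex spatial domain $B_{2r}$ and $\overline{B_r}$ sits strictly inside $B_{2r}$, the standard interior continuity of concave functions yields uniform spatial continuity on $\overline{B_r}$, with modulus controlled by that of $u$ through a perturbation of competitors (for instance, adding to a given competitor a small concave tent that absorbs the oscillation of $u^+$). For continuity in $t$, monotonicity delivers one-sided limits for free; to match them, I would construct at each $(x_0,t_0)$ explicit competitors built from $\Gm(\cdot,t_0)$ via a time shift together with an inflation proportional to the oscillation of $u$ on a small time window, and invoke the uniform continuity of $u$ on $\overline{Q_r}$ to squeeze.

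For part (ii), assume $u$ is Lipschitz with constant $L=\|u\|_{\Lip(\overline{Q_r})}$. Form the auxiliary function $\uhat(y,s):=\sup_{s'\le s}u^+(y,s')$, which inherits Lipschitz constant at most $L$ and is nondecreasing in $s$; the minimality argument from the first paragraph shows that $\Gm(\cdot,t)$ equals the spatial concave envelope of $\uhat(\cdot,t)$ in $B_{2r}$. By Carath\'eodory's theorem in $\R^n$, at each $(x,t)$ there exist weights $\ld_0,\dots,\ld_n\in[0,1]$ with $\sum_i\ld_i=1$, spatial vertices $\{x_i\}$ with $\sum_i\ld_i x_i=x$, and times $t_i\le t$ at which $\Gm(x,t)=\sum_i\ld_i u^+(x_i,t_i)$ and $\Gm(x_i,t_i)=u^+(x_i,t_i)$. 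For any nearby $(y,s)\in\overline{Q_r}$, using the same convex weights together with the concavity in $x$ and monotonicity in $t$ of $\Gm$ yields the one-sided bound $\Gm(y,s)\ge\sum_i\ld_i u^+(x_i+(y-x),\,t_i\wedge s)$; applying the Lipschitz estimate for $u^+$ termwise, summing over the $n+1$ vertices, and then swapping $(x,t)$ and $(y,s)$ deliver the bound $\|\Gm\|_{\Lip(\overline{Q_r})}\le(n+1)L$.

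The main obstacle is the asymmetric handling of space and time. The elliptic analogue is cleaner because joint concavity delivers continuity and Lipschitz preservation for free; here the Carath\'eodory decomposition must respect the constraint $t_i\le t$ enforced by monotonicity of competitors in $\Pi$, and the time-continuity in (i) has to be squeezed between the automatic left-limit from monotonicity and an upper barrier built from the continuity of $u$ rather than from concavity. Once these two asymmetries are handled, the remainder of the proof is just the bookkeeping triggered by the minimality of $\Gm$ in $\Pi$.
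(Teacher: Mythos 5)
The paper does not actually prove this lemma --- it is imported verbatim from Chapter VII of \cite{D} (the parabolic Monge--Amp\`ere machinery of Tso), so there is no internal argument to measure yours against; I can only assess your sketch on its own terms. Its backbone is sound and is essentially the ``right'' reduction: the class $\Pi$ is stable under pointwise infima (infima of concave functions are concave, infima of nondecreasing functions are nondecreasing), so $\Gm$ is the minimal admissible majorant of $u^+$, and for each fixed $t$ it coincides with the constrained spatial concave envelope of the running maximum $\hat u(\cdot,t)=\sup_{s\le t}u^+(\cdot,s)$, which carries the same modulus of continuity (resp.\ Lipschitz constant) as $u$. Once that identification is made, continuity in $t$ in part (i) is best obtained not by ad hoc competitors but from the near-nonexpansiveness of the envelope operator in the sup norm: if $g_1\le g_2+\ep$ and $v$ is an admissible majorant of $g_2$, then $v+\ep\,\phi$ is admissible for $g_1$, where $\phi$ is a fixed concave tent with $\phi\ge 1$ on $\supp u^+\subset \overline B_{r/2}$ and $\phi=0$ on $\pa B_{2r}$ (this tent is forced by the boundary constraint $v\le 0$ on $\pa_p Q_{2r}$, which rules out adding a constant). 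You gesture at exactly this with the ``small concave tent'' and the ``inflation proportional to the oscillation,'' but you never state the mechanism, so part (i) as written is an assertion rather than a proof.

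Two concrete soft spots in part (ii). First, the translated vertices $x_i+(y-x)$ need not remain in $\overline B_{2r}$: the Carath\'eodory representation of the constrained envelope genuinely involves boundary vertices $x_i\in\pa B_{2r}$ carrying the value $0$, and with $x,y\in\overline B_r$ the translate of such a vertex leaves the domain, so the comparison $\Gm(y,s)\ge\sum_i\ld_i\hat u\bigl(x_i+(y-x),\,t_i\wedge s\bigr)$ is not available as written. This is repairable (keep the boundary vertices fixed and translate only the interior ones, exploiting that $u^+$ vanishes outside $B_{r/2}$, or bound the supporting slopes directly at contact points), but it must be done, and it is precisely where a dimensional constant enters. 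Second, your termwise estimate, were it valid, would give $|\Gm(x,t)-\Gm(y,s)|\le L(|x-y|+|t-s|)\sum_i\ld_i=L(|x-y|+|t-s|)$, since the weights sum to one --- i.e.\ constant $L$, not $(n+1)L$. Proving something stronger than the lemma is not a logical error, but the mismatch signals that the bookkeeping (in particular the boundary-vertex correction just mentioned, which is what actually produces a factor depending on $n$) has not been carried out. So: right strategy, two fixable but genuine gaps at exactly the points where the parabolic constraint $t_i\le t$ and the lateral boundary condition interact with the Carath\'eodory decomposition.
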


\begin{lemma}\label{lem-gamma-t}
Let $\Gamma$ be concave in $Q_1$ and $\Gamma=0$ on $\partial_p Q_1$.
Set $\cC$ to be the support of $\det (D^2 \Gamma)$. Then we have the
following results;

$(i)$ if $0\leq \Gamma_t\leq M$ on $\cC$, then $0\leq\Gamma_t\leq
2M$ on $\overline{Q}_1$.

$(ii)$ for any $(x,t)\in Q_1\cap\cC$, we have that $$0\leq
\sup_{B_r(x)}\Gamma_t(y,t) \leq \left((1-c r)\sup_{B_r(x,t)\cap
\cC}\Gamma_t(y,t)+\f{1}{c}\,r \sup_{Q_1}\Gamma_t(y,s)\right)\text{
on $\overline{Q}_1$}$$
for some uniform constant $c>0$.
\end{lemma}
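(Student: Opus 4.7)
The argument rests on the standard structural description of the concave envelope $\Gamma(\cdot,t)$ on $\bar B_1$ at each frozen time slice. Because $\Gamma(\cdot,t)$ is concave with $\Gamma=0$ on $\partial B_1$, and because $\cC$ is by definition the support of $\det D^2_x\Gamma$, on the complement of $\cC(t):=\cC\cap(B_1\times\{t\})$ the function $\Gamma(\cdot,t)$ is (piecewise) affine in $x$. A supporting-hyperplane / Carath\'eodory argument then shows that every $x_0\in\bar B_1$ admits a convex combination
\begin{equation*}
x_0=\sum_i\lambda_i\,y_i,\qquad y_i\in\cC(t)\cup\partial B_1,\qquad \lambda_i\ge 0,\quad \sum_i\lambda_i=1,
\end{equation*}
which saturates concavity in $x$: $\Gamma(x_0,t)=\sum_i\lambda_i\,\Gamma(y_i,t)$, where $\Gamma(y_i,t)=0$ is used when $y_i\in\partial B_1$.

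For part $(i)$, fix $(x_0,t_0)\in Q_1$ and pick such a decomposition at $t_0$. For $h>0$ small, applying the \emph{same} weights to the concave function $\Gamma(\cdot,t_0-h)$ yields only an inequality, $\Gamma(x_0,t_0-h)\ge\sum_i\lambda_i\,\Gamma(y_i,t_0-h)$. Subtracting from the equality at $t_0$, dividing by $h$, and passing $h\to 0^+$, the boundary terms ($y_i\in\partial B_1$) drop out entirely and the remaining terms are controlled by the hypothesis $\Gamma_t\le M$ on $\cC$ at $(y_i,t_0)$, giving
\begin{equation*}
\Gamma_t(x_0,t_0)\le\sum_{y_i\in\cC(t_0)}\lambda_i\,\Gamma_t(y_i,t_0)\le M.
\end{equation*}
Combined with $\Gamma_t\ge 0$ (monotonicity in $t$), this is the desired bound on $\overline Q_1$; the extra factor of $2$ provides slack for bookkeeping one-sided $t$-derivatives, since $\Gamma$ is a priori only Lipschitz in $t$.

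For part $(ii)$, the same scheme is run locally at $(x,t)\in Q_1\cap\cC$, with $y\in B_r(x)$ in place of $x_0$. The key quantitative input is that, because $(x,t)\in\cC$, the function $\Gamma(\cdot,t)$ has a nontrivial modulus of strict concavity at $x$. This forces the convex-combination points $y_i$ associated with any $y\in B_r(x)$ to concentrate in a parabolic neighborhood of $(x,t)$ of size comparable to $r$: the total weight of those $y_i$ lying outside $B_r(x,t)$ is at most $c\,r$ for a universal $c>0$. Splitting the sum in the $(i)$-argument into the indices with $y_i\in B_r(x,t)\cap\cC$ (total weight $\ge 1-cr$) and its complement (total weight $\le cr$), and estimating each piece by $\sup_{B_r(x,t)\cap\cC}\Gamma_t$ and $\sup_{Q_1}\Gamma_t$ respectively, produces the stated inequality after a renaming of the constant.

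The main technical obstacle is this localization step in $(ii)$: one needs a quantitative modulus of strict concavity of $\Gamma(\cdot,t)$ at a point of $\cC$ that persists uniformly as the base point varies in $B_r(x)$. This is precisely where the hypothesis $(x,t)\in\cC$ (rather than merely a generic point of $Q_1$) is genuinely used, and is the source of the linear-in-$r$ off-diagonal term. The remaining work consists of the supporting-hyperplane geometry that produces the Carath\'eodory-type decomposition with $y_i\in\cC(t)\cup\partial B_1$, and careful handling of one-sided time derivatives since $\Gamma$ is in general only Lipschitz in $t$.
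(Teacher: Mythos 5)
Your proposal follows essentially the same route as the paper's proof: both arguments hinge on the Carath\'eodory-type identity $\Gamma(x_0,t)=\sum_i\lambda_i\Gamma(x_i,t)$ with the $x_i$ in the contact set (or on $\partial B_1$, where $\Gamma\equiv 0$), and both reduce part (ii) to a quantitative statement about how the weights $\lambda_i$ concentrate near the contact point $x$. Two comparative remarks. In (i) your execution is actually cleaner: you differentiate the convex-combination identity directly, by pairing the equality at time $t_0$ with the Jensen inequality $\Gamma(x_0,t_0-h)\ge\sum_i\lambda_i\Gamma(y_i,t_0-h)$ and letting $h\to 0^+$, which yields the bound $M$ outright; the paper instead builds an auxiliary smallest concave function $\tilde{\Gamma}$ that is dominated by $\Gamma$ and touches it at $(x_0,t_0)$, and pays a factor $2$ for propagating the hypothesis $\Gamma_t\le M$ from the single slice $\{t=t_0\}$ to a short interval $[t_0-\delta,t_0]$. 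For (ii), the localization step you explicitly flag as the main obstacle is precisely the step the paper also asserts without proof: it claims, ``from the convexity of $\Gamma$,'' that the decomposition of any $x_0\in B_r(x)\setminus\cC$ has a dominant vertex $x_k\in B_r(x)\cap\cC$ with $cr<1-\lambda_k<r/c$, which is the same weight-concentration claim as your bound ``total weight outside $B_r(x,t)$ is at most $c\,r$'' (to literally recover the factor $(1-cr)$ in front of the local supremum one also needs the lower bound $1-\lambda_k>cr$, but since $\Gamma_t\ge 0$ and the local supremum is dominated by the global one, this only changes constants). So you have identified the crux correctly and your sketch is at the same level of rigor as the published argument; neither writeup supplies the quantitative geometric lemma that would make the weight estimate rigorous.
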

\begin{proof}
(i) Take any point $(x_0,t_0)\in Q_1\s \cC$. Then there are some
$(x_i,t_0)\in \cC$ ($i=1,\cdots, n+1$) such that
$$\Gamma (x_0,t_0)=\sum_{i=1}^{n+1}\lambda_i \Gamma(x_i,t_0).$$
From the assumption, there is some $\delta\in (0,1)$ such that
$$\Gamma(x_i,t)\geq \Gamma(x_i,t_0)+2M (t-t_0)$$
for $t_0-\delta\leq t\leq t_0$. Set $\tilde{\Gamma}$ to be the
smallest concave function in $Q=\{(x,t)\in Q_R:\, t_0-\dt\leq t\leq
t_0\}$ such that $\tilde{\Gamma}=0$ on $\partial_p Q$ and

$$\tilde{\Gamma}(x_i,t)\geq \Gamma(x_i,t_0)+2M (t-t_0)$$
for $t_0-\delta\leq t\leq t_0$. Since $\Gamma(x_i,t)\geq
\Gamma(x_i,t_0)+2M (t-t_0)$, $\Gamma (x,t)\geq \tilde{\Gamma}(x,t)$
in $Q$ and $\Gamma (x_0,t_0)=\tilde{\Gamma}(x_0,t_0)$ from the
definition of  $\tilde{\Gamma}(x,t)$. Therefore
$\Gamma_t(x_0,t_0)\leq \tilde{\Gamma}_t(x_0,t_0)\leq 2M$. And from
the definition of concavity of $\Gamma$ in a parabolic domain and
Definition \ref{def-gamma}, we see that $\Gamma$ is nonincreasing. So
we have that $\Gamma_t\geq 0$.

(ii) From the argument in  (i), for any point $(x_0,t)\in
(B_r(x)\times \{t\})\s \cC$, there  are  $(x_i,t)\in \cC$ ($i=1,\cdots,
n+1$) such that $\Gamma (x_0,t)=\sum_{i=1}^{n+1}\lambda_i
\Gamma(x_i,t)$ and $\partial_t\Gamma
(x_0,t)=\sum_{i=1}^{n+1}\lambda_i\partial_t \Gamma(x_i,t).$ In
addition, there is some $x_k\in B_r(x)\cap\cC$ and $c\in(0,1)$ such
that
$$c r<1-\lambda_k<\f{r}{c}$$ from the convexity of $\Gamma$ since $x\in B_r(x)\cap \cC$. Then we
have that
\begin{equation*}
\begin{split}
\partial_t\Gamma (x_0,t)&\leq \lambda_k \partial_t \Gamma(x_k,t)+\sum_{i\neq k}\lambda_i \partial_t\Gamma(x_i,t)\\
&\leq \left((1-cr)\sup_{y\in B_r(x)\cap
\cC}\partial_t\Gamma(y,t)+\f{1}{c}\,r
\sup_{Q_1}\partial_t\Gamma\right)
\end{split}
\end{equation*}
since $\sum_i\lambda_i=1$.

\end{proof}

\begin{lemma}\label{lem-normal-map}
$(i)$ If $\,v$ is  strictly convex and smooth, then we have that
$\n_i\left(\det (D^2 v(x,t))v^{ij}\right)=0$ where $(v^{ij})$ is the
inverse of $(v_{ij})$.

$(ii)$ If $v\in C^{2,1}(\overline{Q}_r(x_0,t_0))$ is concave in $x$
and increasing in $t$ and if $v=0$ on $\partial_pQ_r(x_0,t_0)$, then
we have that
\begin{equation*}\begin{split}
&(a)\,\,\,\int_{B_r(x_0)}v(x,t_0)\det (D^2 v(x,t_0))dx
\\&\qquad\qquad\qquad\qquad\qquad\qquad=(n+1)\int_{Q_r(x_0,t_0)}\partial_t v(x,t)\det (D^2 v
(x,t))dxdt,\end{split}\end{equation*}

\item$(b)$ $$\fN_v (Q_r)=\int_{Q_r}\partial_t v(x,t)\det (D^2 v(x,t))^-dxdt,$$
\item$(c)$ $$\max_{x\in B_r}v(x,t_0)\leq C r\,\left(\fN_v (Q_r(x_0,t_0))\right)^{\frac{1}{n+1}}.$$
\end{lemma}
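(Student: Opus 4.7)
The plan is to establish (i)--(ii)(c) in order: (i) supplies the algebraic identity that drives the integration by parts in (ii)(a); (ii)(b) is an area formula; and (ii)(c) combines (a), (b) with a classical Aleksandrov maximum principle on the terminal time slice. For (i), I would invoke the classical divergence-free property of the cofactor matrix of a Hessian. Setting $A^{ij}:=\det(D^2v)\,v^{ij}$, which equals (up to a fixed sign) the $(i,j)$-cofactor of $(v_{ij})$, one computes $\sum_i\partial_iA^{ij}$ directly from the cofactor expansion: each differentiation brings down a single third derivative of $v$ multiplied by an $(n-1)\times(n-1)$ minor, and equality of mixed partials $v_{ijk}=v_{ikj}$ makes the sum collapse to zero.

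For (ii)(a), I would set $F(t):=\int_{B_r(x_0)}v(x,t)\det(D^2v(x,t))\,dx$, which vanishes at $t=t_0-r^\sigma$ by the parabolic boundary condition. Differentiating in $t$ and using the Jacobi identity $\partial_t\det(D^2v)=A^{ij}(\partial_tv)_{ij}$,
\[
F'(t)=\int_{B_r}\partial_tv\,\det(D^2v)\,dx+\int_{B_r}v\,A^{ij}(\partial_tv)_{ij}\,dx.
\]
I would then integrate the second term by parts twice in $x$: by (i) the derivatives hitting $A^{ij}$ drop out, while all boundary contributions vanish because $v\equiv0$ (and hence $\partial_tv\equiv0$) on $\partial B_r(x_0)\times\{t\}$. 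The algebraic identity $v_{ij}A^{ij}=n\det(D^2v)$ collapses this integral to $n\int_{B_r}\partial_tv\det(D^2v)\,dx$, yielding $F'(t)=(n+1)\int_{B_r}\partial_tv\det(D^2v)\,dx$; integrating over $[t_0-r^\sigma,t_0]$ gives (a).

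For (ii)(b), I would apply the area formula to the smooth map $\Phi:Q_r(x_0,t_0)\to\BR^{n+1}$ defined by $\Phi(x,t):=\bigl(\nabla_xv(x,t),\,v(x,t)-x\cdot\nabla_xv(x,t)\bigr)$. Writing its $(n+1)\times(n+1)$ Jacobian in block form and performing the row operation that subtracts $x_i$ times the $i$-th row of the upper block from the last row puts $D\Phi$ in block upper-triangular form with diagonal blocks $(v_{ij})$ and $\partial_tv$, so $|\det D\Phi|=\partial_tv\cdot|\det D^2v|$. Concavity of $v$ in $x$ identifies $|\det D^2v|$ with $\det(D^2v)^-$ in the paper's sign convention, and the area formula yields (b).

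For (ii)(c), combining (a) and (b) produces the identity
\[
(n+1)\,\fN_v(Q_r(x_0,t_0))=\int_{B_r(x_0)}v(x,t_0)\det(D^2v(x,t_0))^-\,dx.
\]
Since $v(\cdot,t_0)$ is concave on $B_r(x_0)$ with boundary values $0$ and maximum $M$, I would apply the classical Aleksandrov maximum principle to $v(\cdot,t_0)-M/2$ on the convex sub-level set $\{v(\cdot,t_0)\ge M/2\}$, obtaining $\int_{\{v(\cdot,t_0)\ge M/2\}}\det(D^2v)^-\,dx\gs(M/r)^n$; the pointwise lower bound $v\ge M/2$ on this set then gives $\int_{B_r}v(\cdot,t_0)\det(D^2v(\cdot,t_0))^-\,dx\gs M^{n+1}/r^n$, and combining with the identity and solving for $M$ yields (c). The subtle point is the double integration by parts in (ii)(a), where every boundary contribution must vanish using $v=\partial_tv=0$ on the lateral boundary and the cofactor-derivative terms must drop out via (i), all while the sign conventions for $\det(D^2v)^-$ versus $|\det D^2v|$ are tracked consistently.
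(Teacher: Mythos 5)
Your argument for (i) and for (ii)(a) is essentially the paper's own: the paper obtains the divergence-free identity by differentiating $K\delta^j_i=Kv^{jk}v_{ki}$ and invoking Jacobi's formula, which is just another packaging of the cofactor identity you use, and for (a) it performs exactly your computation --- Jacobi's formula for $\partial_t\det(D^2v)$, a double integration by parts whose boundary terms die because $v=\partial_t v=0$ on the lateral boundary, part (i) to discard the derivatives falling on the cofactors, and $v^{ij}v_{ij}=n$ --- followed by integration in $t$ (the paper also remarks that the non-smooth case is handled by approximation via Theorem 22 of [D], a step you should record as well). For (b) and (c) the paper gives no proof, citing Tso [T] and [D] respectively; your area-formula computation of $|\det D\Phi|=\partial_tv\,|\det D^2v|$ for (b) and your Aleksandrov argument for (c) are the standard proofs behind those citations. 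For (c) your route is mildly different from the paper's sketch: the paper compares $v$ with a concave cone of the same height and uses monotonicity of the normal map, whereas you work directly on the terminal slice with the sub-level set $\{v(\cdot,t_0)\ge M/2\}$; both reduce to the same lower bound $\int_{B_r}v(x,t_0)\det(D^2v(x,t_0))^-\,dx\gtrsim M^{n+1}/r^n$. One point worth flagging: this bound yields $M\le C\,r^{n/(n+1)}\bigl(\fN_v(Q_r)\bigr)^{1/(n+1)}$ rather than the stated $M\le C\,r\,\bigl(\fN_v(Q_r)\bigr)^{1/(n+1)}$. The factor $r$ in the lemma (and in the paper's own display $(\max v/r)^{n+1}\le C(n+1)\int\tilde\Gamma\det(D^2\tilde\Gamma)\,dx$) is dimensionally inconsistent with the cone computation when $r$ is small, so the exponent your argument produces is in fact the correct one; the discrepancy is harmless downstream because the lemma is only applied with $r$ of order one (Lemma \ref{lem-tso} uses $Q_1$), but you should state the conclusion you actually prove rather than assert it "yields (c)" as written.
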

\begin{proof}
(i) Set $K=\det(D^2 v)$. By taking a derivative of
$K\delta^j_i=Kv^{jk}v_{ki}$ in the direction $e_j$, we have
$Ku^{mn}v_{mnj}\delta^j_i=(Kv^{jk})_jv_{ki}+Kv^{jk}v_{kij}$, which
implies $ (Kv^{jk})_jv_{ki}=0$. Now we show $ 0=
(Kv^{jk})_jv_{ki}v^{il}=(Kv^{jk})_j\delta_k^l=(Kv^{jl})_j$.

(ii-(a)) For readers, we are going to show (b) when $v$ is smooth and
strictly convex. The general case can be proved by approximation,
Theorem 22, \cite{D}.

\begin{equation*}
\begin{split}
\frac{d}{dt}\int_{B_r(x_0)}&v(x,t)\det (D^2
v(x,t)dx\\
&=\int_{B_r(x_0)}v_t\det (D^2 v)dx+\int_{B_r(x_0)}v\det (D^2
v)v^{ij}v_{t,ij}dx.
\end{split}
\end{equation*}
By taking an integration by part at $i,j$ variables and applying
(i), we can show
\begin{equation*}
\begin{split}
\int_{B_r(x_0)}v\det (D^2 v)v^{ij}v_{t,ij}=\int_{B_r(x_0)}v\det (D^2 v)v^{ij}v_{t,ij}\\
=\int_{B_r(x_0)}v_t\det (D^2 v)v^{ij}v_{ij}=n\int_{B_r(x_0)}v_t\det
(D^2 v)dx
\end{split}
\end{equation*}
and then
\begin{equation*}
\begin{split}
\frac{d}{dt}\int_{B_r(x_0)}v(x,t)\det (D^2
v(x,t)dx=(n+1)\int_{B_r(x_0)}v_t\det (D^2 v)dx.
\end{split}
\end{equation*}
Taking an integration in $t$ on $(t_0-r,t_0]$, we get the
conclusion.

(ii-(b)) It comes from \cite{T}.

(ii-(c)) The proof of (c) can be found at Theorem 22, Chapter
VII,\cite{D}. The idea is the following. First choose a concave cone
$\tilde{\Gamma}$ in $Q_{2r}(x_0,t_0)$  whose vertex is
$\max_{B_r}v(x,t_0)$ and  $\supp\tilde{\Gamma}=Q_{2r}(x_0,t_0)$.
Then $\fN_ {\tilde{\Gamma}}(Q_r(x_0,t_0))\leq \fN_v (Q_r(x_0,t_0)).$
Since $\supp \det (D^2 \tilde{\Gamma})$ is the maximum point, we
have
\begin{equation*}
\begin{split}
\left(\frac{\max_{B_r}v(x,t_0)}{r}\right)^{n+1}&\leq C(n+1)\int_{B_r(x_0)}(\tilde{\Gamma}(x,t_0))\det (D^2 \tilde{\Gamma}(x,t_0)dx\\
&= C(n+1)\,\fN_{\tilde{\Gamma}} (Q_r(x_0,t_0))  .
\end{split}
\end{equation*}
\end{proof}

\subsection{Nonlocal Parabolic A-B-P estimate}

\begin{lemma}\label{lem-abp-k} Let $0<\sigma<2$ be given.
Let $u\le 0$ in $\partial_p^* Q_{1/2}$ and let $\Gm$ be its
concave envelope in $Q_{2}$. If $u\in \rB(\BR^n\times I)$ is a
viscosity subsolution to $\cM^+_{\fL} u-u_t=-f$ in $Q_1$ where
$f:\BR^n\times I\to\BR$ is a continuous function with $f>0$ on
$\cC(u,\Gm,Q_1)$, then there exists some constant $C>0$ depending
only on $n,\ld$ and $\Ld$ $($but not on $\sm$$)$ such that for any
$(x,t)\in\cC(u,\Gm,Q_1)$ and any $\vr,M>0$ there is some
$k\in\BN\cup\{0\}$ such that
\begin{equation}\label{eq-abp-1}
\bigl|\{y\in R_k:u(x+y,t)-u(x,t)-(\n_x\vp(x,t)\cdot y)\chi_{B_1}(y)\geq  M\,r_k^2\}\bigr|\le
C\,\f{f(x,t)}{M}\,|R_k|
\end{equation} where $R_k=B_{r_k}\s
B_{r_{k+1}}$ for $r_k=\vr\,2^{-\f{1}{2-\sm}-k}$. In addition, we
have that \begin{equation}\partial_t\Gamma (x,t)\leq C
f(x,t).\end{equation} Here, $\n\Gm(x,t)$  and $\partial_t\Gm(x,t)$ denote any element of the
superdifferential $\pa\Gm(x,t)$ of $\,\Gm$ with respect to $x$ and $t$ respectively at
$(x,t)$.
\end{lemma}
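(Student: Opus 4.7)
The plan is to use the concave envelope $\Gm$ at a contact point $(x,t)\in\cC(u,\Gm,Q_1)$ as a test function from above, combining its spatial concavity and temporal monotonicity with the viscosity subsolution property. To legitimize pointwise computations I work with the upper $\vep$-envelope $u^{\vep}$, which is $\rC^{1,1}_{x,-}\cap \rC^{0,1}_t$ almost everywhere and still a viscosity subsolution to $\cM^+_{\fL}u^{\vep}-u^{\vep}_t\ge -f$. Both conclusions are established for $u^{\vep}$ with constants independent of $\vep$, and the statement for $u$ follows by letting $\vep\downarrow 0$ using $u^{\vep}\downarrow u$ uniformly together with weak convergence of the associated Monge-Amp\`ere measures.

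The central observation is that at a contact point the symmetric second difference is nonpositive, i.e., $\mu(u,x,y,t)\le 0$ for \emph{every} $y\in\BR^n$. Let $p\in\pa_x\Gm(x,t)$ be a spatial supergradient. For $|y|$ such that $x\pm y\in B_2$, concavity of $\Gm$ in $x$ combined with $u\le\Gm$ in $Q_2$ and $u(x,t)=\Gm(x,t)$ gives $\mu(u,x,y,t)\le\mu(\Gm,x,y,t)\le 0$; for the remaining $y$'s, $x\pm y$ must lie outside $B_{1/2}$ (since $B_{1/2}\subset B_2$), so the boundary hypothesis $u\le 0$ on $\pa^*_pQ_{1/2}$ together with $u(x,t)=\Gm(x,t)\ge 0$ at contact forces $\mu(u,x,y,t)\le -2u(x,t)\le 0$. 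Hence $\mu^+(u,x,y,t)\equiv 0$, and the Pucci formula collapses to
\[\cM^+_{\fL}u(x,t)=-(2-\sm)\ld\int_{\BR^n}\f{\mu^-(u,x,y,t)}{|y|^{n+\sm}}\,dy\le 0.\]

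Next I invoke the subsolution inequality $u_t\le\cM^+_{\fL}u+f$. Since $u\le\Gm$ with equality at $(x,t)$ and $\Gm$ is nondecreasing in $t$, a comparison of left time-derivatives yields $u_t(x,t)\ge\pa_t\Gm(x,t)$; combined with $\cM^+_{\fL}u\le 0$ just derived, this gives $\pa_t\Gm(x,t)\le u_t(x,t)\le f(x,t)$, proving the pointwise bound on $\pa_t\Gm$ (with $C=1$). Rearranging the same inequality also produces the integral estimate
\[(2-\sm)\ld\int_{\BR^n}\f{\mu^-(u,x,y,t)}{|y|^{n+\sm}}\,dy\le f(x,t),\]
which is the main quantitative tool for the dyadic part. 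For the dyadic estimate I argue by contradiction. Set $a_y:=u(x+y,t)-u(x,t)-(p\cdot y)\chi_{B_1}(y)$, so that $\mu(u,x,y,t)=a_y+a_{-y}$; since $\mu\le 0$, on $A_k:=\{y\in R_k:a_y\ge M r_k^2\}$ one has $a_{-y}\le -a_y\le -M r_k^2$, so $\mu^-(u,x,y,t)=-\mu\ge M r_k^2$ on $A_k$. If $|A_k|>C f(x,t)|R_k|/M$ were to hold for every $k\ge 0$, then using $|y|\le r_k$ on $R_k$ and $|R_k|\asymp r_k^n$,
\[\int_{R_k}\f{\mu^-(u,x,y,t)}{|y|^{n+\sm}}\,dy\ge\f{M|A_k|}{r_k^{n+\sm-2}}>c_n\,C\,f(x,t)\,r_k^{2-\sm}.\]
Summing in $k\ge 0$ and using $\sum r_k^{2-\sm}=\vr^{2-\sm}/(2(1-2^{-(2-\sm)}))$ together with the uniform bound $(2-\sm)/(1-2^{-(2-\sm)})\le 8/3$, the resulting lower bound on the total integral exceeds $f(x,t)/((2-\sm)\ld)$ once $C=C(n,\ld,\Ld)$ is chosen sufficiently large, contradicting the integral estimate above.

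The main technical obstacle is the precise scaling built into $r_k=\vr\,2^{-1/(2-\sm)-k}$: the prefactor $2^{-1/(2-\sm)}$ is designed so that $r_0^{2-\sm}=\vr^{2-\sm}/2$, and combined with the geometric sum this exactly compensates the $(2-\sm)^{-1}$ arising in the integral estimate, leaving $C$ independent of $\sm\in(0,2)$ as $\sm\uparrow 2$. Secondary care is required in verifying $\mu\le 0$ in the mixed annular region where one of $x\pm y$ sits in $B_2\s B_{1/2}$ while the other is exterior (handled by fully exploiting the supergradient inequality and the sign of $u(x,t)\ge 0$ at contact), in the pointwise evaluation of $\cM^+_{\fL}u^{\vep}$ (finite because $\mu^+\equiv 0$ reduces the integrand to $\mu^-=O(|y|^2)$ near the origin via the $\rC^{1,1}_{x,-}$ regularity), and in the interpretation of $u_t\ge\pa_t\Gm$ through left time-derivatives or the a.e. differentiability of $u^{\vep}$.
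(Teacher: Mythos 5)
Your overall strategy is the paper's own: show that $\mu(u,x,\cdot,t)\le 0$ at a contact point (via the concave envelope for $|y|<1$, and via the exterior condition $u\le 0$ together with $u(x,t)=\Gm(x,t)\ge 0$ for $|y|\ge 1$), so that $\mu^+\equiv 0$ and the Pucci formula collapses to $\cM^+_{\fL}u(x,t)=-(2-\sm)\ld\int\mu^-(u,x,y,t)|y|^{-n-\sm}\,dy$; then combine the subsolution inequality with $\pa_t\Gm\ge 0$ (resp.\ with the comparison $u_t\ge\pa_t\Gm$ at contact) to obtain both $(2-\sm)\ld\int\mu^-|y|^{-n-\sm}\,dy\le f(x,t)$ and $\pa_t\Gm(x,t)\le f(x,t)$, and finish the ring estimate by contradiction, the prefactor $2^{-1/(2-\sm)}$ in $r_k$ keeping the constant uniform in $\sm$. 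The use of the $\vep$-envelope to legitimize pointwise evaluation is a reasonable extra precaution not spelled out in the paper's proof of this lemma.

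There is, however, a genuine gap in the bridge from the set in \eqref{eq-abp-1} to a pointwise lower bound on $\mu^-$. Writing $a_y=u(x+y,t)-u(x,t)-(p\cdot y)\chi_{B_1}(y)$, so that $\mu(u,x,y,t)=a_y+a_{-y}$, you argue that $a_y\ge Mr_k^2$ and $\mu\le 0$ give $a_{-y}\le -Mr_k^2$ and hence $\mu^-=-a_y-a_{-y}\ge Mr_k^2$. The last implication is a non sequitur: you do get $-a_{-y}\ge Mr_k^2$, but the other summand satisfies $-a_y\le -Mr_k^2$, so no lower bound on the sum follows. In fact, at a contact point one has $a_y\le 0$ for \emph{every} $y$ (for $|y|<1$ because $u\le\Gm$ and $p$ is a supergradient of $\Gm$ at $x$; for $|y|\ge 1$ because $u(x+y,t)\le 0\le u(x,t)$), so the set $\{a_y\ge Mr_k^2\}$ is empty and \eqref{eq-abp-1} as literally written is vacuous: the inequality in the statement is evidently a sign slip for $a_y\le -Mr_k^2$, which is what the elliptic analogue in \cite{CS} asserts and what the paper's own proof silently substitutes when it passes to the set $\{\mu^-\ge Mr_k^2\}$. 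With the corrected sign your bridge does work, but through the other term: $a_y\le -Mr_k^2$ together with $a_{-y}\le 0$ gives $\mu^-=-a_y-a_{-y}\ge Mr_k^2$. A further small slip: in the final summation you need $(2-\sm)/(1-2^{-(2-\sm)})$ to be bounded \emph{below} (it is, by $1/\log 2$), not above by $8/3$; an upper bound points the wrong way in the contradiction. The rest of the argument is sound.
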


{\it $[$Proof of Lemma \ref{lem-abp-k}$]$} Let $0<\sigma<2$ be
given. Take any $(x,t)\in\cC(u,\Gm,Q_1)$. Since $u$ can be touched
by a hyperplane from above at $(x,t)$, we see that
$\n\vp(x,t)=\n\Gm(x,t)$ for some $\vp\in \rC^2_{Q_1}(u;x,t)^+$.

If $|y|<1$, then
$\mu(u,x,y,t)\le 0$ by the
definition of $\Gm$. If $|y|\ge 1$, then $|x\pm y|\ge |y|-|x|\ge
1-1/2=1/2$ because $\cC(u,\Gm,Q_1)\subset Q_{1/2}$. Thus we have
that $u(x\pm y,t)\le 0$, and so we see that $\mu(u,x,y,t)\le 0$ for
any $y$ with $|y|\ge 1$. Therefore we conclude that
$\mu(u,x,\cdot,t)\le 0$ on $\BR^n$. This implies that
$\mu^+(u,x,y,t)=0$. Since $\Gamma_t\geq 0$,  we have that
\begin{equation}
\begin{split}
-f(x,t)&\le\cM^+_{\fL} u(x,t)-\partial_t\Gm(x,t)\\
&=(2-\sm)\int_{\BR^n}\f{-\ld\mu^-(u,x,y,t)}{|y|^{n+\sm}}\,dy-\partial_t\Gm(x,t)\\
&\le(2-\sm)\int_{B_{r_0}}\f{-\ld\mu^-(u,x,y,t)}{|y|^{n+\sm}}\,dy
\end{split}
\end{equation} where $r_0=\vr\,
2^{-1/(2-\sm)}$. Decomposing the above integral into the rings
$R_k$, we have that
\begin{equation}\label{eq-abp-2}
f(x,t)\ge(2-\sm)\ld\sum_{k=0}^{\iy}\int_{R_k}\f{\mu^-(u,x,y,t)}{|y|^{n+\sm}}\,dy.
\end{equation}

Assume that the conclusion \eqref{eq-abp-1} does not hold, i.e. for
any $C>0$ there are some $(x_0,t_0)\in\cC(u,\Gm,Q_1)$ and
$\vr_0,M_0>0$ such that
\begin{equation*}\label{eq-abp-3}
\bigl|\{y\in R_k: \mu^-(u,x_0,y,t_0)\geq
M\,r_k^2\}\bigr|>C\,\f{f(x_0,t_0)}{M_0}\,|R_k|
\end{equation*}
for all $k\in\BN\cup\{0\}$. Since $(2-\sm)\frac{1}{1-2^{-(2-\sm)}}$
remains bounded below for $\sm\in (0,2)$, it thus follows from
\eqref{eq-abp-2} that
\begin{equation}
\begin{split}
f(x_0,t_0)&\ge(2-\sm)\ld\sum_{k=0}^{\iy}\int_{R_k}\f{\mu^-(u,x_0,y,t_0)}{|y|^{n+\sm}}\,dy\\
&\ge
c(2-\sm)\sum_{k=0}^{\iy}M_0\,\f{r_k^2}{r_k^{\sm}}\,C\,\f{f(x_0,t_0)}{M_0}\\
&\ge\f{c(2-\sm)\rho_0^2}{1-2^{-(2-\sm)}}\,C\,f(x_0,t_0)\ge
c\,C\,f(x_0,t_0)
\end{split}\end{equation}
for any $C>0$. Taking $C$ large enough, we obtain a contradiction.
Hence we are done.

Now we are going to control the time derivative of $\Gamma$. Since
$\mu(u,x,y,t)\leq 0$ for $y\in \BR^n$ as in the above, we have that
\begin{equation*}
\partial_t\Gm(x,t)\le\cM^+_{\fL} u(x,t)+f(x,t)\le f(x,t)
\end{equation*} Hence we complete the proof. \qed

Lemma \ref{lem-abp-k} and Lemma \ref{lem-gamma-t} give us the
following corollary.
\begin{cor}\label{cor-abp-t} Under the same condition as Lemma \ref{lem-abp-k},
there is a universal constant $C>0$ such that
$$\sup_{Q_1}\partial_t\Gamma \leq C\,\sup_{Q_1} f.$$
\end{cor}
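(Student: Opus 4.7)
My plan is to combine the pointwise bound at contact points furnished by Lemma \ref{lem-abp-k} with the propagation mechanism of Lemma \ref{lem-gamma-t}(i). The corollary is essentially immediate once these two ingredients are put together, so the proposal is quite short.

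First, I would invoke the second conclusion of Lemma \ref{lem-abp-k}, which asserts that $\partial_t\Gamma(x,t)\le C f(x,t)$ at every contact point $(x,t)\in\cC(u,\Gm,Q_1)$. Since $f$ is continuous on $Q_1$, this upgrades at once to the uniform pointwise bound $\partial_t\Gamma(x,t)\le M$ on $\cC(u,\Gm,Q_1)$, where I set $M:=C\,\sup_{Q_1}f$. I would also record the free bound $\partial_t\Gamma\ge 0$, which follows from the monotonicity in $t$ built into Definition \ref{def-gamma} and was already noted in Lemma \ref{lem-abp-k}.

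Next, I would observe that off the contact set, $\Gamma$ is affine in $x$ (being the pointwise infimum of affine supports touching $u^+$ from above), so $\det(D^2_x\Gamma)=0$ there. Consequently the support $\cC$ appearing in the statement of Lemma \ref{lem-gamma-t}, i.e.\ the support of $\det(D^2_x\Gamma)$, is contained in the A--B--P contact set $\cC(u,\Gm,Q_1)$. Therefore the hypothesis of Lemma \ref{lem-gamma-t}(i), namely $0\le \Gamma_t\le M$ on the support of $\det(D^2_x\Gamma)$, is already verified.

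Finally, I would apply Lemma \ref{lem-gamma-t}(i) directly to propagate the bound from the contact set to the entire cylinder, obtaining $0\le\partial_t\Gamma\le 2M$ on $\overline Q_1$, which is the desired estimate with constant $2C$. The only mildly technical point to check is compatibility of domains: Lemma \ref{lem-gamma-t} is stated with $\Gamma$ vanishing on $\partial_p Q_1$, while here $\Gamma$ is the concave envelope defined in $Q_2$ with the contact set lying in $Q_{1/2}$. This is not a genuine obstacle, since the proof of Lemma \ref{lem-gamma-t}(i) is a local convex-combination argument over $Q_1$ that goes through verbatim once we replace $Q_1$ by the region where the envelope lives; no rescaling is needed because both the hypothesis and the desired conclusion are taken on $\overline Q_1$.
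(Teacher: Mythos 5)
Your proposal is correct and follows essentially the same route as the paper, which obtains the corollary precisely by combining the contact-point bound $\partial_t\Gm\le Cf$ from Lemma \ref{lem-abp-k} with the propagation statement Lemma \ref{lem-gamma-t}(i) applied on the support of $\det(D^2_x\Gm)$. One small wording caveat: off the contact set the envelope need not be affine in $x$, only flat along at least one direction (its graph contains a segment through each such point), but this already gives $\det(D^2_x\Gm)=0$ there, so your inclusion of the support of $\det(D^2_x\Gm)$ in $\cC(u,\Gm,Q_1)$ and the rest of the argument stand.
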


To get a local estimate, let us introduce the following rectangles which is different from the standard parabolic rectangle since it incluses some future points:
for $(x,t)\in Q_1$, we set $$\cK_r(x,t)=\{(y,s):\, \max(
|y_1-x_1|^2,\cdots,|y_n-x_n|^2, |t-s|)< r^2\}.$$
\begin{lemma}\cite{CS}\label{lem-abp-2} Let $\Gm$ be a parabolic concave function on $K_r(x,t)$
where $x\in\BR^n$ and let $h>0$. If $|\{y\in
S_r(x):\Gm(y,t)<\Gm(x,t)+(y-x)\cdot\n_x\Gm(x,t)-Cf(x,t)r^2\}|\le\ep\,|S_r(x)|$
for any small $\ep>0$ where $S_r(x)=B_r(x)\s B_{r/2}(x)$, then we
have $\Gm(y,t)\ge\Gm(x,t)+(y-x)\cdot\n_x\Gm(x,t)- Cf(x,t)r^2$ for
any $(y,t)\in K_{r/2}(x,t)$.\end{lemma}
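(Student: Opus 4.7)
The plan is to fix the time $t$ and reduce the statement to a purely convex-geometric fact about super-level sets of the concave function $\Gm(\cdot,t)$. By Definition \ref{def-gamma}, parabolic concavity gives concavity in $x$ at each fixed $t$, so the time variable plays essentially no role here beyond selecting a spatial slice. After translating so that $x=0$, consider
\[
\wtil\Gm(y)\fd\Gm(y,t)-\Gm(0,t)-y\cdot\n_x\Gm(0,t),\qquad y\in B_r,
\]
which is concave in $y$ with $\wtil\Gm(0)=0$ and $\wtil\Gm\le 0$ everywhere on $B_r$ (the tangent hyperplane lies above a concave function). The super-level set
\[
K\fd\{y\in\overline{B_r}:\wtil\Gm(y)\ge -Cf(0,t)r^2\}
\]
is then a closed convex subset of $\overline{B_r}$ containing the origin, and the hypothesis reads $|S_r\s K|\le\vep\,|S_r|$. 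The conclusion of the lemma reduces to showing $B_{r/2}\subset K$.

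I would argue this convex-geometric claim by contradiction. Suppose some $z\in B_{r/2}\s K$ exists. Since $K$ is closed and convex with $z\notin K$, Hahn--Banach yields a unit vector $v\in\BR^n$ and a constant $c\in\BR$ with $v\cdot w\le c$ for every $w\in K$ and $v\cdot z>c$. From $0\in K$ one gets $c\ge 0$, and from $|z|<r/2$ together with $|v|=1$ one gets $c<r/2$. Consider then the cap
\[
D\fd\{w\in B_r:v\cdot w>r/2\}.
\]
Every $w\in D$ satisfies $|w|^2\ge(v\cdot w)^2>r^2/4$, so $D\subset B_r\s B_{r/2}=S_r$, and since $v\cdot w>r/2>c$, no such $w$ can lie in $K$. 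Hence $D\subset S_r\s K$, and a standard spherical-cap estimate gives $|D|\ge c_n\,|S_r|$ for a purely dimensional constant $c_n>0$. Choosing the threshold $\vep$ smaller than $c_n$ contradicts the hypothesis and forces $B_{r/2}\subset K$, which is the desired inequality after translating back.

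The argument is elliptic in character and parallels Caffarelli--Silvestre's analogous elliptic lemma in \cite{CS}; in particular neither the A-B-P estimate Lemma \ref{lem-abp-k} nor Lemma \ref{lem-gamma-t} is invoked. The main (essentially only) thing to check is the dimensional spherical-cap lower bound $|D|\ge c_n|S_r|$, which is elementary since the cap has height $r/2$ inside a ball of radius $r$. The geometric reason the hypothesis is imposed only on the thick annulus $S_r$ rather than the full ball is precisely so that a separating hyperplane coming from a point of $B_{r/2}$ is forced to cut off a cap of controlled size from the outer half of the ball; this is the content of the constraint $c<r/2$.
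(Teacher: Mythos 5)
Your argument is correct, and it is essentially the argument of Caffarelli--Silvestre for their elliptic Lemma 8.4: the paper gives no proof here (the lemma is simply cited from \cite{CS}), and the cited proof is exactly your separating-hyperplane-plus-cap reasoning applied to the convex superlevel set of the concave slice $\Gm(\cdot,t)$ minus its tangent plane. One caveat worth noting: your proof (like the original in \cite{CS}) yields the conclusion on the ball $B_{r/2}(x)$, whereas the paper states it on the cube $K_{r/2}(x,t)$, whose spatial section has circumradius $\sqrt{n}\,r/2>r/2$; for a point $z$ with $|z-x|$ up to $\sqrt n\, r/2$ the separation constant $c$ is no longer forced below $r/2$ and the cap $D$ may be empty, so the cube version does not follow verbatim --- this is a looseness in the paper's restatement of the lemma (which would be repaired by running the same argument with the ring taken at a dimensionally larger radius), not a defect in your proof of the ball version.
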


\begin{cor}\label{cor-abp-q}
Under the same condition as Lemma \ref{lem-abp-2}, there is some
universal constant $C>0$ such that
 $$\left|\Gm(y,s)-\Gm(x,t)+(y-x)\cdot\n_x\Gm(x,t)\right|\leq C\,r^2\biggl(\,\sup_{\cK_{r/2}(x,t)}f+r\sup_{Q_1}f\,\biggr)$$ for any
$(y,s)\in \cK_{r/2}(x,t)$.
\end{cor}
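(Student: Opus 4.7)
The plan is to split the difference at the intermediate point $(y,t)$, which separates a pure spatial correction (controlled by Lemma~\ref{lem-abp-2} and concavity) from a pure temporal correction (controlled by the $\Gm_t$-estimates of Lemma~\ref{lem-gamma-t}). Precisely, I will write
\[
\Gm(y,s)-\Gm(x,t)-(y-x)\cdot\n_x\Gm(x,t)=\bigl[\Gm(y,t)-\Gm(x,t)-(y-x)\cdot\n_x\Gm(x,t)\bigr]+\bigl[\Gm(y,s)-\Gm(y,t)\bigr]
\]
(so the displayed ``$+$'' in the statement is a typo for ``$-$'') and bound the two bracketed pieces separately.

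For the spatial bracket, with $(y,t)\in K_{r/2}(x,t)$, Lemma~\ref{lem-abp-2} gives the one-sided lower bound $\Gm(y,t)\ge\Gm(x,t)+(y-x)\cdot\n_x\Gm(x,t)-Cf(x,t)\,r^{2}$, while concavity of $\Gm$ in $x$ at time $t$ furnishes the matching upper bound $\Gm(y,t)\le\Gm(x,t)+(y-x)\cdot\n_x\Gm(x,t)$. Together, using $f(x,t)\le\sup_{\cK_{r/2}(x,t)}f$, this controls the spatial bracket by $Cr^{2}\sup_{\cK_{r/2}(x,t)}f$.

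For the temporal bracket, since $\Gm$ is monotone in $t$ and (a.e.) differentiable, I use $|\Gm(y,s)-\Gm(y,t)|\le |s-t|\sup\Gm_{t}\le (r^{2}/4)\sup\Gm_{t}$ over $\cK_{r/2}(x,t)$. To bound $\sup_{\cK_{r/2}(x,t)}\Gm_t$ sharply I invoke Lemma~\ref{lem-gamma-t}(ii) centered at $(x,t)\in\cC(u,\Gm,Q_1)$: for each relevant time slice one has
\[
\sup_{B_{r/2}(x)}\Gm_{t}(\cdot,\tau)\;\le\;(1-cr)\sup_{B_{r}(x)\cap\,\cC}\Gm_{t}(\cdot,\tau)+\tfrac{r}{c}\sup_{Q_{1}}\Gm_{t}.
\]
Now the contact-set term is handled by the second conclusion of Lemma~\ref{lem-abp-k}, which gives $\Gm_t\le Cf$ on $\cC$, hence $\sup_{B_r(x)\cap\,\cC}\Gm_t\le C\sup_{\cK_{r/2}(x,t)}f$; the global term is handled by Corollary~\ref{cor-abp-t}, giving $\sup_{Q_1}\Gm_t\le C\sup_{Q_1}f$. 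Substituting yields $\sup\Gm_t\le C\bigl(\sup_{\cK_{r/2}(x,t)}f+r\sup_{Q_1}f\bigr)$ and hence the desired temporal estimate after multiplying by $|s-t|\le r^{2}$.

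The main obstacle is the temporal step, and specifically the need to apply Lemma~\ref{lem-gamma-t}(ii) in a neighborhood rather than at the single point $(x,t)$: one must verify that a contact point persists in $B_{r}$ for all nearby time slices $\tau\in(t-r^{2}/4,t+r^{2}/4)$, so that the convex-combination argument producing the $(1-cr)$ splitting remains applicable (minor enlargement of the spatial radius from $r/2$ to, say, $3r/2$ is harmless and gets absorbed into the universal constant). The two bounds are then combined by the triangle inequality to yield the claimed inequality, with the $\sup_{\cK_{r/2}(x,t)}f$-term coming from both the spatial and local temporal pieces and the $r\sup_{Q_1}f$-term being the genuinely nonlocal contribution inherited from Lemma~\ref{lem-gamma-t}(ii).
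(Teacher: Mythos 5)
Your proposal is correct and follows essentially the same route as the paper: the paper also splits through the intermediate point $(y,t)$, handling the spatial part with Lemma \ref{lem-abp-2} plus concavity and the temporal part with the $\Gm_t$-bounds from Lemma \ref{lem-gamma-t}(ii), Corollary \ref{cor-abp-t}, and the contact-set estimate $\pa_t\Gm\le Cf$ (the paper merely writes the two cases $s\le t$ and $s>t$ separately, using monotonicity of $\Gm$ in $t$ for the trivial side of each case). Your observations about the sign typo in the displayed statement and about needing Lemma \ref{lem-gamma-t}(ii) on each nearby time slice are both apt refinements of what the paper leaves implicit.
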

\begin{proof} We observe that $\pa_t\Gm(y,s)\le C\,f(y,s)$ for any
$(y,s)\in \cK_{r/2}(x,t)\cap\cC(u,\Gm,Q_1)$. 
(1.) First consider the case $s\leq t$. From Corollary
\ref{cor-abp-t} and (ii) of Lemma \ref{lem-gamma-t}, we have that
\begin{equation*}\begin{split}
\Gm(y,s)&\geq \Gm(y,t)+(s-t)\partial_t\Gm(y,t)-o(|s-t|)\\
&\ge\Gm(x,t)+(y-x)\cdot\n_x\Gm(x,t)-Cf(x,t)r^2\\
&\quad+(s-t)\bigl((1-c\,r)\sup_{y\in
B_{r/2}(x)\cap\cC}\pa_t\Gm(y,t)+c\, r\sup_{Q_1}\pa_t\Gm\bigr)\\
&\ge\Gm(x,t)+(y-x)\cdot\n_x\Gm(x,t)-C\,r^2\biggl(\,\sup_{\cK_{r/2}(x,t)}f+r\sup_{Q_1}f\,\biggr)
\end{split}\end{equation*} for any $(y,s)\in
K_{r/2}(x,t)$ such that $s\leq t$. 
And 
\begin{equation*}\begin{split}
\Gm(y,s)&\leq \Gm(y,t)\\
&\le\Gm(x,t)+(y-x)\cdot\n_x\Gm(x,t)
\end{split}\end{equation*}
for the concavity of $\Gamma$.

(2.) Now we assume $s>t$.
\begin{equation*}\begin{split}
\Gm(y,s)&\geq \Gm(y,t)\\
&\ge\Gm(x,t)+(y-x)\cdot\n_x\Gm(x,t)-Cf(x,t)r^2\\
\end{split}\end{equation*} for any $(y,s)\in
\cK_{r/2}(x,t)$. 
And \begin{equation*}\begin{split}
\Gm(y,s)&\leq \Gm(y,t)+(s-t)\partial_t\Gm(y,t)+o(|s-t|)\\
&\le\Gm(x,t)+(y-x)\cdot\n_x\Gm(x,t)\\
&\quad+(s-t)\bigl((1-c\,r)\sup_{y\in
B_{r/2}(x)\cap\cC}\pa_t\Gm(y,t)+c\, r\sup_{Q_1}\pa_t\Gm\bigr)\\
&\le\Gm(x,t)+(y-x)\cdot\n_x\Gm(x,t)+C\,r^2\biggl(\,\sup_{\cK_{r/2}(x,t)}f+r\sup_{Q_1}f\,\biggr)
\end{split}\end{equation*} for any $(y,s)\in
\cK_{r/2}(x,t)$ such that $s>t$. 
\end{proof}

\begin{cor}\label{cor-abp-3}
For any $\ep>0$, there is a constant $C>0$ such that for any
function $u$ with the same hypothesis as Lemma \ref{lem-abp-k} and
for each $(x,t)\in\cC(u,\Gm,Q_1)$, there is some $r>0$ and
$r\in(0,2\rho_0 2^{-\f{1}{2-\sm}})$ such that
\begin{equation*}\begin{split} &\f{|\{y\in S_r(x):u(y,t)<u(x,t)+(y-x)\cdot\n_x\Gm(x,t)-C\xi_r(x,t)\}|}{|S_r(x)|}\le\ep,\\
&|p(y,s)-\Gm(y,s)|\le C\xi_r(x,t)
\end{split}\end{equation*} for any $(y,s)\in \cK_{r/2}(x,t)$, where
$p(y,s)=\Gm(x,t)+(y-x)\cdot\n_x\Gm(x,t)$ and
$\xi_r(x,t)=\,r^2\left(\sup_{\cK_{r/2}(x,t)}f+r\sup_{Q_1}
f\right)$.\end{cor}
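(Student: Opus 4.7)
The plan is to apply Lemma~\ref{lem-abp-k} at a carefully chosen value of $M$ to produce simultaneously a dyadic scale $r=r_k$ and a measure estimate on $\mu^-$, translate this into the one-sided statement about $u$ required in the corollary by using the two defining properties of the concave envelope $\Gm$ at a contact point, and finally invoke Corollary~\ref{cor-abp-q} to upgrade the resulting bound on the time slice $\{t\}$ to the full rectangle $\cK_{r/2}(x,t)$.

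First, I would apply Lemma~\ref{lem-abp-k} with $\vr=2\rho_0$ and $M=C_0\,f(x,t)/\ep$, where $C_0$ denotes the universal constant from that lemma. This produces an index $k\in\BN\cup\{0\}$ and a scale $r:=r_k\in\bigl(0,\,2\rho_0\,2^{-1/(2-\sm)}\bigr)$ for which $|\{y\in R_k:\mu^-(u,x,y,t)\ge Mr_k^2\}|\le\ep|R_k|$. Since $(x,t)\in\cC(u,\Gm,Q_1)$, we have $\Gm(x,t)=u(x,t)$ and $\Gm$ admits $p(\cdot,t)$ as a spatial tangent hyperplane from above at $x$, so $u(x\pm y,t)\le u(x,t)\pm y\cdot\n_x\Gm(x,t)$. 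Writing
\[
\mu^-(u,x,y,t)=[u(x,t)-u(x+y,t)+y\cdot\n_x\Gm(x,t)]+[u(x,t)-u(x-y,t)-y\cdot\n_x\Gm(x,t)]
\]
as a sum of two non-negative terms, one sees that any $y$ with $u(x+y,t)-u(x,t)-y\cdot\n_x\Gm(x,t)<-A$ automatically satisfies $\mu^-(u,x,y,t)>A$. Because $\xi_r(x,t)\ge f(x,t)r^2$ (using $(x,t)\in\cK_{r/2}(x,t)$), the choice $C=C_0/\ep$ gives $C\xi_r(x,t)\ge Mr_k^2$, so the bad set in the first conclusion is contained in $\{y\in R_k:\mu^-\ge Mr_k^2\}$, and the identity $|R_k|=|S_r(x)|$ delivers the first estimate.

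For the second estimate, $\Gm\ge u$ implies
\[
\{y\in S_r(x):\Gm(y,t)<p(y,t)-C\xi_r(x,t)\}\subseteq\{y\in S_r(x):u(y,t)<p(y,t)-C\xi_r(x,t)\},
\]
so the left-hand set likewise has measure at most $\ep|S_r(x)|$. This is precisely the hypothesis of Lemma~\ref{lem-abp-2} (with threshold $C\xi_r$ in place of the typical $Cf(x,t)r^2$); combined with the concavity bound $\Gm(y,t)\le p(y,t)$ it yields $|\Gm(y,t)-p(y,t)|\le C\xi_r(x,t)$ on $B_{r/2}(x)$. Corollary~\ref{cor-abp-q}, whose measure hypothesis we have just verified, then extends this time-slice bound to the full rectangle $\cK_{r/2}(x,t)$, completing the second estimate.

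The main obstacle is the translation from the two-sided $\mu^-$ estimate of Lemma~\ref{lem-abp-k} to the one-sided statement about $u$, which rests on the splitting of $\mu^-$ via the tangent hyperplane together with the elementary inequality $\xi_r\ge f(x,t)r^2$; once the scale $r$ and the constant $C=C_0/\ep$ are fixed, the remainder is a direct assembly of Lemma~\ref{lem-abp-2} and Corollary~\ref{cor-abp-q} applied to $\Gm$.
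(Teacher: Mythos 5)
Your argument is correct and is precisely the assembly the paper intends (the corollary is stated without proof as an immediate consequence of Lemma \ref{lem-abp-k}, Lemma \ref{lem-abp-2} and Corollary \ref{cor-abp-q}): choosing $M=C_0f(x,t)/\ep$ in Lemma \ref{lem-abp-k}, converting the $\mu^-$ bound to the one-sided bound on $u$ via the tangent-plane splitting at the contact point, and then feeding the resulting measure estimate (which passes to $\Gm$ since $\Gm\ge u$) into Lemma \ref{lem-abp-2} and Corollary \ref{cor-abp-q} is exactly the intended route. The only cosmetic point is that your use of $\xi_r(x,t)\ge f(x,t)r^2$, via $(x,t)\in\cK_{r/2}(x,t)$, is the right way to reconcile the threshold $M r_k^2$ with the threshold $C\xi_r(x,t)$ appearing in the statement.
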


Using Tso's argument on \cite{D} and \cite{T}, we easily obtain the
following lemma.

\begin{lemma}\label{lem-tso} There is a universal constant $C_n>0$ such that for any
function $u$ with the same hypothesis as Lemma \ref{lem-abp-k},
\begin{equation*}\bigl(\sup_{Q_1}u^+\bigr)^{n+1}\le C_n\,|\fN_{\Gm}(Q_1)|.\end{equation*}\end{lemma}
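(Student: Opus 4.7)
The plan is to apply the parabolic A-B-P estimate of Lemma \ref{lem-normal-map}(ii)(c) directly to the concave envelope $\Gm$ on $Q_2$, then localize the resulting normal-map bound to $Q_1$ using a contact-set argument.

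First I would reduce to the case $M:=\sup_{Q_1}u^+>0$. Since $u\le 0$ on $\partial_p^*Q_{1/2}$, one has $u^+\equiv 0$ outside $\overline{Q_{1/2}}$, so $M$ is attained at some $(x_0,t_0)\in\overline{Q_{1/2}}$ and $\Gm(x_0,t_0)\ge M$ because $\Gm\ge u^+$ on $Q_2$. By construction, $\Gm$ vanishes on $\partial_p^*Q_2\supset\partial_pQ_2$, is concave in $x$, and non-decreasing in $t$; hence by monotonicity in $t$,
\[
M\le\Gm(x_0,t_0)\le\sup_{Q_2}\Gm=\max_{x\in\overline B_2}\Gm(x,0).
\]

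Next I would apply Lemma \ref{lem-normal-map}(ii)(c) with $v=\Gm$, $r=2$, and center $(0,0)$. Since $\Gm$ is only Lipschitz, I would first mollify $\Gm$ spatially to obtain $\Gm_\vep\in C^{2,1}$ that are concave in $x$ and non-decreasing in $t$ with $\Gm_\vep\downarrow\Gm$, and pass to the limit using the weak convergence of Monge--Amp\`ere measures cited from Chapter VII of \cite{D}. This yields $M\le 2C\,|\fN_\Gm(Q_2)|^{1/(n+1)}$ for some universal $C>0$. Finally, outside $\overline{Q_{1/2}}$ the vanishing of $u^+$ forces $\Gm$ to be ruled, i.e.\ affine along certain space-time directions interpolating between the ``cap'' over $\overline{Q_{1/2}}$ and $0$ on $\partial_p^*Q_2$. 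Consequently the density $\partial_t\Gm\cdot\det(D^2\Gm)^-$ of the normal-map measure (Lemma \ref{lem-normal-map}(ii)(b)) vanishes off $\overline{Q_{1/2}}\subset Q_1$, giving $|\fN_\Gm(Q_2)|=|\fN_\Gm(Q_1)|$ and hence $M^{n+1}\le(2C)^{n+1}|\fN_\Gm(Q_1)|=:C_n|\fN_\Gm(Q_1)|$.

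The main obstacle is the approximation step: producing parabolic-concave mollifications of $\Gm$ whose parabolic Monge--Amp\`ere measures converge weakly to that of $\Gm$, so that Lemma \ref{lem-normal-map}(ii)(c) can be legitimately invoked. An alternative Tso-style route avoiding mollification is to construct directly a piecewise-affine parabolic cone with vertex $(x_0,t_0,M)$ supported in $Q_2$, explicitly compute its normal-map image as an $(n{+}1)$-dimensional region of volume comparable to $M^{n+1}$, and transfer this image into $\fN_\Gm(Q_1)$ by the supporting-hyperplane comparison outlined in the proof sketch of Lemma \ref{lem-normal-map}(ii)(c).
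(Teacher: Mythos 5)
Your proposal is correct and follows essentially the route the paper intends: the paper's one-line proof simply invokes Tso's cone/normal-map comparison, i.e.\ Lemma \ref{lem-normal-map}(ii)(c), which is exactly what you rely on (indeed your ``alternative route'' reproduces verbatim the paper's sketch of the proof of (ii)(c)). The only additional ingredient in your primary route --- that $|\fN_{\Gm}(Q_2)|$ is carried by the contact set inside $Q_1$, since any supporting hyperplane of $\Gm$ touching at a non-contact point also touches at a contact point in $\overline{Q_{1/2}}$ with the same $(p,h)$ --- is the standard envelope fact, and the smoothing/approximation issues you flag are precisely those the paper itself defers to Theorem 22, Chapter VII of \cite{D}.
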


\begin{cor}\label{cor-abp-n-upper}
For any $\ep>0$, there is a universal constant $C>0$ such that for
any function $u$ with the same hypothesis as Lemma \ref{lem-abp-k}
and for each $(x,t)\in\cC(u,\Gm,Q_1)$, there is a
$r\in(0,2\rho_0 2^{-\f{1}{2-\sm}})$ such that
\begin{equation*}\bigl|\fN_{\Gm}(\cK_{r/2}(x,t))\bigr|\le C\,\biggl(\,\sup_{\cK_{r/2}(x,t)}f+r\sup_{Q_1} f\,\biggr)^{n+1}\,|\cK_{r/2}(x,t)
\cap\cC(u,\Gm,Q_1)|.\end{equation*} \end{cor}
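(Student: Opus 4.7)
The plan is to combine Corollary \ref{cor-abp-3} (closeness of $\Gm$ to its spatial tangent hyperplane on $\cK_{r/2}(x,t)$) with the integral representation of the normal map from Lemma \ref{lem-normal-map} (ii-b) and the pointwise $t$-derivative bound from Lemma \ref{lem-abp-k}. First, for each $(x,t)\in\cC(u,\Gm,Q_1)$, apply Corollary \ref{cor-abp-3} with the chosen $\ep$ to obtain $r\in(0,2\rho_0 2^{-1/(2-\sm)})$ such that $|p(y,s)-\Gm(y,s)|\le C\,\xi_r(x,t)$ on $\cK_{r/2}(x,t)$, where $p(y,s)=\Gm(x,t)+(y-x)\cdot\n_x\Gm(x,t)$ is the spatial affine support of $\Gm$ at $(x,t)$.

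The key observation is that at fixed $s$, the function $y\mapsto p(y,s)-\Gm(y,s)$ is a nonnegative convex function (since $\Gm(\cdot,s)$ is concave and $p$ is linear in $y$) bounded by $2C\xi_r(x,t)$ on the spatial slice. Standard convex-function Hessian bounds (obtained by estimating the second difference quotient on balls of radius $\sim r$) imply that, at a.e.\ interior point $(y,s)\in\cK_{r/2}(x,t)$ where $\Gm(\cdot,s)$ is $C^{1,1}$ (which holds on $\cC$ by the $C^{1,1}$-by-below property), every eigenvalue of $-D^2_x\Gm(y,s)\ge 0$ is bounded by $C\,\xi_r(x,t)/r^2$; AM-GM then yields
\[
|\det D^2_x\Gm(y,s)|\le C\bigl(\xi_r(x,t)/r^2\bigr)^n=C\Bigl(\,\sup_{\cK_{r/2}(x,t)}f+r\sup_{Q_1}f\,\Bigr)^n.
\]
Next, Lemma \ref{lem-abp-k} gives $0\le\pa_t\Gm(y,s)\le Cf(y,s)\le C\sup_{\cK_{r/2}(x,t)}f$ at every $(y,s)\in\cK_{r/2}(x,t)\cap\cC(u,\Gm,Q_1)$.

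Finally, invoke Lemma \ref{lem-normal-map} (ii-b), after the standard smooth, strictly concave approximation of $\Gm$ and passage to the limit by the weak stability of Monge--Amp\`ere measures cited from Chapter VII of \cite{D}, to write
\[
\bigl|\fN_\Gm(\cK_{r/2}(x,t))\bigr|=\int_{\cK_{r/2}(x,t)}\pa_t\Gm(y,s)\,|\det D^2_x\Gm(y,s)|\,dy\,ds.
\]
Because the spatial Monge--Amp\`ere measure of the concave envelope is supported on the contact set at each time slice, the right-hand side equals the integral over $\cK_{r/2}(x,t)\cap\cC(u,\Gm,Q_1)$. Inserting the pointwise bounds derived above produces
\[
\bigl|\fN_\Gm(\cK_{r/2}(x,t))\bigr|\le C\Bigl(\,\sup_{\cK_{r/2}(x,t)}f+r\sup_{Q_1}f\,\Bigr)^{n+1}\bigl|\cK_{r/2}(x,t)\cap\cC(u,\Gm,Q_1)\bigr|,
\]
which is the asserted inequality.

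The main obstacle I expect is the rigorous derivation of the pointwise Hessian bound $|\det D^2_x\Gm|\le C(\xi_r/r^2)^n$ on the contact set from the $L^\infty$ closeness bound $|p-\Gm|\le C\xi_r$. Formally, this is the second difference quotient calculation, but one must approximate $\Gm$ by smooth strictly concave $\Gm_k$, control both the eigenvalue estimate and the integral identity from Lemma \ref{lem-normal-map} (ii-b) through the approximation, and handle the asymmetry between $s\le t$ and $s>t$ inside $\cK_{r/2}(x,t)$, where the sign of $p-\Gm$ can flip but $|p-\Gm|\le C\xi_r$ nevertheless gives the convex-function bound on $p+C\xi_r-\Gm$. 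The remaining ingredients---the $t$-derivative bound and the concentration of the Monge--Amp\`ere measure on the contact set---are direct consequences of Lemma \ref{lem-abp-k} and classical concave-envelope theory.
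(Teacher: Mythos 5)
Your architecture is the same as the paper's: fix a contact point, take the radius $r$ from Corollary \ref{cor-abp-3}, represent $\fN_\Gm$ via Lemma \ref{lem-normal-map}(ii-b) as an integral of $\pa_t\Gm\,\det[D^2_x\Gm]^-$ concentrated on the contact set, bound $\pa_t\Gm$ by $Cf$ from Lemma \ref{lem-abp-k}, and bound the determinant using the closeness of $\Gm$ to its spatial tangent plane. So far this tracks the paper's (very terse) proof exactly.

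The step you single out as the main obstacle is, however, justified incorrectly, and it is a genuine gap. An $L^\infty$ trap $p-C\xi_r\le\Gm(\cdot,s)\le p$ on $B_{r/2}(x)$ does \emph{not} give an a.e.\ pointwise eigenvalue bound $-D^2_x\Gm\le C\xi_r r^{-2}$: the second difference quotient estimate you invoke reads $\bigl(\Gm(y+he,s)+\Gm(y-he,s)-2\Gm(y,s)\bigr)/h^2\ge -2C\xi_r/h^2$, which degenerates as $h\to 0$ and therefore does not pass to the Alexandrov Hessian. Concretely, $y\mapsto -\ep\sqrt{\dt^2+|y|^2}$ is concave and trapped in a band of width $\ep(\dt+r)$, yet has Hessian of size $\ep/\dt$ at the origin; so a bounded concave function can have arbitrarily large curvature at interior points, including points of twice differentiability. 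What the trap actually yields is the measure-theoretic statement that the spatial gradient image $\n_x\Gm(B_{r/2}(x)\times\{s\})$ lies in a ball of radius $C\xi_r/r$ and the intercepts range over an interval of length $C\xi_r$; this bounds $|\fN_\Gm(\cK_{r/2}(x,t))|$ by $C(\sup f+r\sup f)^{n+1}|\cK_{r/2}(x,t)|$, i.e.\ with the full cube measure on the right, not $|\cK_{r/2}(x,t)\cap\cC(u,\Gm,Q_1)|$. To get the contact-set measure one needs either a true pointwise Hessian bound on $\cC$ --- which requires a quadratic trap at \emph{every} scale $h\le r/2$ with error $C(\sup f)h^2$, in the spirit of Lemma 3.5 of \cite{CC}, whereas Corollary \ref{cor-abp-3} supplies only the single scale $r$ --- or a positive-density estimate for $\cC$ inside $\cK_{r/2}(x,t)$ (this is how \cite{CS} handle the analogous point, by introducing an approximate contact set). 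To be fair, the paper's own one-line proof is silent on exactly the same issue; but since you explicitly attempt to close this step, the argument you give for it does not work as stated.
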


\pf  Fix any $(x,t)\in\cC(u,\Gm,Q_1)$. $\partial_t\Gamma (y,s)\leq
Cf(y,s)$ for $(y,s)\in \cK_{r/2}(x,t) \cap\cC(u,\Gm,Q_1)$. And from
Corollary \ref{cor-abp-3} and  Lemma \ref{lem-normal-map}, we
conclude that
\begin{equation*}\begin{split}|\fN_{\Gm}(\cK_{r/2}(x,t))|&\le C\int_{\cK_{r/2}(x,t)
\cap\cC(u,\Gm,Q_1)}\pa_t\Gm(y,s)\det[D^2_x\Gm(y,s)]^-\,dy\,ds\\
&\le C\,\biggl(\,\sup_{\cK_{r/2}(x,t)}f+r\sup_{Q_1}
f\,\biggr)^{n+1}\,|\cK_{r/2}(x,t)
\cap\cC(u,\Gm,Q_1)|.\end{split}\end{equation*} Hence we complete the
proof. \qed

We obtain a nonlocal version of Alexandroff-Bakelman-Pucci estimate
in the following theorem as

\begin{thm}\label{thm-abp} Let $u$ and $\Gm$ be functions as in
Lemma \ref{lem-abp-k}. Then there exist a finite family 
$\{K_k\}$ of pairwise disjoint $(n+1)$-dimensional
cubes with sidelength $r_k/2$ such that

$(a)$ $\cC(u,\Gm,Q_1)\subset\bigcup_k K_k$,

$(b)$ $\cC(u,\Gm,Q_1)\cap K_k\neq\phi$ for any $k$,
$(c)$ $r_k\le 2\rho_0 2^{-\f{1}{2-\sm}}$ for any $k$,

$(d)$ $|\fN_{\Gm}(K_k)|\le C\,\bigl(\sup_{\overline
K_k}f+r_k\sup_{Q_1} f\bigr)^{n+1}$

$\qquad\qquad\qquad\qquad\qquad\quad\qquad\qquad\qquad\times|\cC(u,\Gm,Q_1)\cap
K_k|$

$(e)$ $\bigl(\sup_{Q_1}u^+\bigr)^{n+1}\le C\,|\fN_{\Gm}(Q_1)|$

\noindent where the constants $C>0$ depends on $n,\Ld$ and $\ld$ $($
but not on $\sm$$)$.\end{thm}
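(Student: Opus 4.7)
The plan is as follows. Part (e) is immediate: it is exactly the global Alexandroff-type bound of Lemma \ref{lem-tso}, applied to $u$ and its concave envelope $\Gamma$. No further work is needed for this statement.

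For (a)--(d) I would construct the family $\{K_k\}$ by a parabolic Calder\'on--Zygmund style dyadic decomposition driven by the \emph{localized} ABP estimate of Corollary \ref{cor-abp-n-upper}. Concretely, fix an initial scale $s_0=\rho_0\,2^{-1/(2-\sm)}$ and tile a neighborhood of $\overline{Q_1}$ by finitely many $(n+1)$-dimensional cubes of sidelength $s_0$. Then I would run a dyadic refinement: at each stage, for every current cube $K$ of sidelength $r/2$ that meets $\cC(u,\Gamma,Q_1)$, test whether
\begin{equation*}
|\fN_{\Gamma}(K)| \le C\bigl(\sup_{\overline K} f + r\sup_{Q_1} f\bigr)^{n+1}\,|\cC(u,\Gamma,Q_1)\cap K|
\end{equation*}
holds with the universal constant $C$ from Corollary \ref{cor-abp-n-upper}. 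If yes, retain $K$ in the final family with $r_k:=r$; if no, split $K$ dyadically into $2^{n+1}$ equal subcubes and repeat. Cubes disjoint from the contact set are discarded at each stage. Since dyadic refinement produces disjoint cubes and every retained cube meets $\cC(u,\Gamma,Q_1)$, this immediately delivers (a), (b), (c) and (d), provided the procedure terminates.

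To see that it does, fix any $(x,t)\in\cC(u,\Gamma,Q_1)$. Corollary \ref{cor-abp-n-upper} supplies a radius $r(x,t)\in\bigl(0,\,2\rho_0\,2^{-1/(2-\sm)}\bigr)$ for which the localized estimate holds on the rectangle $\cK_{r(x,t)/2}(x,t)$. As the refinement proceeds, once a dyadic cube $K$ containing $(x,t)$ becomes small enough to sit inside (a fixed enlargement of) $\cK_{r(x,t)/2}(x,t)$, the local bound transfers from $\cK_{r(x,t)/2}(x,t)$ to $K$ with a universal constant, because the supremum of $f$ on a smaller set is no larger and twice the sidelength of $K$ stays below $r(x,t)$. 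Thus the subdivision halts at every contact point. Finiteness of $\{K_k\}$ then follows from compactness of $\overline{\cC(u,\Gamma,Q_1)}\subset\overline{Q_{1/2}}$ combined with the fact that the terminal scale is locally bounded below near each retained cube, so only finitely many dyadic cubes survive.

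The main obstacle is to verify that the constant in (d) is genuinely independent of $\sm\in(0,2)$. This traces back to the $\sm$-independence already built into Lemma \ref{lem-abp-k}, where the uniform boundedness of $(2-\sm)/(1-2^{-(2-\sm)})$ as $\sm\to 2^-$ is essential, and then propagates through Corollaries \ref{cor-abp-3} and \ref{cor-abp-n-upper}. A secondary but delicate technicality is the anisotropy of the auxiliary rectangles $\cK_r(x,t)$, which have spatial sidelength $2r$ but temporal extent $2r^2$; matching these with isotropic $(n+1)$-dimensional cubes of sidelength $r_k/2$ while keeping the $\sup_{\overline{K_k}} f$ and the measure terms under uniform control is exactly the locality-in-time vs.\ nonlocality-in-space mismatch highlighted in the introduction, and is where the parabolic argument genuinely diverges from the elliptic version of \cite{CS}.
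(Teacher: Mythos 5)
Your proposal is correct and follows essentially the same route as the paper: part (e) is Lemma \ref{lem-tso}, and parts (a)--(d) come from a stopping-time dyadic subdivision whose stopping rule is the localized estimate of Corollary \ref{cor-abp-n-upper}, with termination/finiteness proved by the same compactness-and-contradiction argument at a limiting contact point $(x_0,t_0)$ covered by some $\cK_{r/2}(x_0,t_0)$. The paper's own proof is just a compressed version of this (citing the elliptic construction of \cite{CS}), and it glosses over the same transfer step from $\cK_{r/2}(x_0,t_0)$ to the small dyadic cube that you do.
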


\pf First we make dyadic disjoint $(n+1)$-dimensional cubes as the elliptic case, \cite{CS, KL1,KL2}. It follows from Lemma \ref{lem-tso}, Corollary
\ref{cor-abp-n-upper} with the same reason as \cite{CS, KL1,KL2}. Otherwise, there is a sequence of contact points, $\{(x_j,t_j)\}$ which belong to  dyadic cubes, $K_{k_j}$  with lengths converging to zero i.e. $\{(x_j,t_j)\} \subset Q_1\cap\cC$, $(x_j,t_j)\in K_{k_j}$  and $r_{k_j}\rightarrow 0$. Since $Q_1$ is compact, a subsequence of  $\{(x_j,t_j)\}$ converges to a point $(x_0,t_0)\in Q_1\cap\cC$. We will use the same notation for the subsequence of $\{(x_j,t_j)\}$ . On the other hand there is a cube $\cK_{r/2}(x_0,t_0)$ satisfying the conditions (a,b,c) and there is a large $N>0$ such that $Q_{k_j}\subset \cK_{r/2}(x_0,t_0)$ for $i>N$, which is a contradiction.
\qed

\rk It follows from Theorem \ref{thm-abp} we have that
\begin{equation*}\begin{split}\bigl(\sup_{Q_1}u^+\bigr)^{n+1}&\le\sum_k|\fN_{\Gm}(K_k)|\\
& \le C\sum_k\biggl(\,\sup_{\overline
K_k}f+r_k\sup_{Q_1} f\biggr)^{n+1}|\cC(u,\Gm,Q_1)\cap
K_k|.\end{split}\end{equation*} As $\sm\to 2$, the
cube covering of $\cC(u,\Gm,Q_1)$ is getting close to the contact
set $\cC(u,\Gm,Q_1)$ and so the above becomes the following estimate
$$\sup_{Q_1}u^+\le
C\biggl(\int_{\cC(u,\Gm,Q_1)}[f(y,s)]^{n+1}\,dy\,ds\biggr)^{1/(n+1)}.$$
Our estimates remain uniform as the index $\sm$ of the operator is
getting close to $2$. Therefore this implies that our A-B-P estimate
can be regarded as a natural extension of that for parabolic partial
differential equations.

\section{Decay Estimate of Upper Level Sets}\label{sec-decay}

In this section, we are going to show the geometric decay rate of
the upper level set of nonnegative solution $u$. The key Lemma
\ref{lem-decay-key} says that if  a nonnegative function $u$ has a
value smaller than one in $\rK^+_{3 r_0}$ then the lower level set
$\{u\leq M\}\cap\rK^-_{r_0}$ has at least uniformly positive amount of
measure $\nu |\rK^-_{r_0}|$ which will be proven through ABP
estimate. But the assumption of ABP estimate on a subsolution
requires its special shape: it should be nonpositive  $\partial^*_p Q$ and
positive at some interior point. So we are going to construct a
special function $\Psi$ so that $\Psi-u$ meets the requirement of
ABP estimate.

\subsection{Special functions}

The construction of the special function is based on the idea in
\cite{CS, KL1, KL2, W1}. Nontrivial finer computation has been done to detect the influence of values along the time.

\begin{lemma}\label{lem-barrier-1}
Let $Q=B_1\times (0,1]$ and $0<r<1/(2\sqrt{n})$. Then there exist
some $\sm^*\in (1,2)$  and a subsolution $\psi(x,t)$ such that for
any $\sm\in (\sm^*,2)$,
\begin{equation}
\begin{cases}
\psi_t(x,t)\leq \cM^-_{\fL} \psi(x,t),\quad (x,t)\in  Q\s \rK^-_{r/2}\\
\psi(x,t)\geq 1\quad\text{in $\rK^+_{3r}$}\\
\psi(x,t)\leq 0\quad\text{on $\partial^*_p Q$}.
\end{cases}
\end{equation}
\end{lemma}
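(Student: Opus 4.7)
The plan is to construct $\psi$ explicitly as a product $\psi(x,t)=\phi(t)\,\eta(x)$, where $\eta$ is a spatial barrier modeled on the elliptic construction of \cite{CS,KL1,KL2} and $\phi$ is a one-dimensional profile in $t$. By linearity of $\cM^-_\fL$ in its argument, the subsolution inequality reduces to the pointwise condition
\[
\phi'(t)\,\eta(x) \le \phi(t)\,\cM^-_\fL \eta(x) \quad \text{on } Q\setminus\rK^-_{r/2},
\]
so the problem splits into a spatial estimate for $\cM^-_\fL\eta$ plus a one-variable check on $\phi$.

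First I would build $\eta$ as a radial, compactly supported profile of the form $\eta(x)=c\min\{M,(|x|^{-p}-1)_+\}$ with small $p>0$ and suitable constants $c,M>0$. Then $\supp\eta\subset\overline{B_1}$, $\eta$ equals the plateau value $cM$ on a neighborhood of the origin containing $(-r/2,r/2)^n$, and, after choosing $cM\ge 2$, one has $\eta\ge 2$ on $(-3r,3r)^n\cap B_1$. The spatial estimate in \cite{CS,KL1,KL2} gives
\[
\cM^-_\fL\eta(x)\ge c_0>0 \qquad\text{for every }x\in B_1\setminus(-r/2,r/2)^n,
\]
with $c_0$ uniform in $\sigma$ for $\sigma$ in a left neighborhood of $2$; the uniformity rests on the normalization $(2-\sigma)$ in \eqref{eq-kernel-bound}, which absorbs the divergence of $\int_0^1 s^{1-\sigma}\,ds$ as $\sigma\uparrow 2$.

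The two ``boundary'' properties are then immediate: $\phi(0)=0$ together with $\eta\equiv 0$ outside $B_1$ gives $\psi\le 0$ on $\partial^*_p Q$, and $\phi(t)\ge 1/2$ for $t\ge r^\sigma$ together with $\eta\ge 2$ on the plateau gives $\psi\ge 1$ on $\rK^+_{3r}$. The subsolution inequality on the ``exterior'' piece $\{x\in B_1\setminus(-r/2,r/2)^n\}$ is handled by the lower bound $c_0$: choosing $\phi$ so that $\phi'(t)\,M\le c_0\,\phi(t)/2$ on this set suffices, and this is compatible with an initial rapid rise of $\phi$ from $0$ to a value $\ge 1$ on $[0,(r/2)^\sigma]$.

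The main obstacle --- which is the parabolic analogue of the time-delay phenomenon emphasized in the introduction --- is the remaining piece $\{x\in(-r/2,r/2)^n,\ t>(r/2)^\sigma\}$. On the plateau $\eta$ attains its maximum, so the kernel only sees the decay of $\eta$ outside the plateau and $\cM^-_\fL\eta(x)<0$ there. A direct calculation gives the upper bound $|\cM^-_\fL\eta(x)|\le C(2-\sigma)\,cM\,r^{-\sigma}$ on the plateau (thanks once again to the $(2-\sigma)$ normalization). To absorb this, I would let $\phi$ decay mildly (e.g.\ exponentially) on the bounded window $[(r/2)^\sigma,(3^\sigma+2)r^\sigma]$ with rate $\beta=C(2-\sigma)\,r^{-\sigma}$; the resulting multiplicative factor over the window is $\exp(-\beta\,r^\sigma(3^\sigma+1-2^{-\sigma}))$, which stays $\ge 1/2$ because $(2-\sigma)\bigl(3^\sigma+1-2^{-\sigma}\bigr)\to 0$ as $\sigma\uparrow 2$. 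This is precisely what forces the choice of $\sigma^*\in(1,2)$, and once $\sigma^*$ is fixed, $\phi$ and $\eta$ together satisfy all three required properties.
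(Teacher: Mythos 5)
There is a genuine gap, and it sits exactly at the ``time-delay'' difficulty this lemma is supposed to overcome. Your separated ansatz $\psi(x,t)=\phi(t)\eta(x)$ reduces the subsolution inequality (via positive homogeneity of $\cM^-_{\fL}$ --- it is not linear, but for a positive scalar factor $\phi(t)$ this does give $\cM^-_{\fL}[\phi\eta]=\phi\,\cM^-_{\fL}\eta$) to $\phi'(t)\eta(x)\le\phi(t)\,\cM^-_{\fL}\eta(x)$, and this must hold on all of $Q\s\rK^-_{r/2}$. The excluded set $\rK^-_{r/2}=(-r/2,r/2)^n\times(0,(r/2)^\sigma]$ removes only the small spatial cube at small times; the annular region $B_1\s(-r/2,r/2)^n$ is \emph{not} excluded for small $t$. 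On that region $\eta>0$ and $\cM^-_{\fL}\eta(x)/\eta(x)$ is bounded above by some finite constant $c_1$, so the inequality forces $\phi'(t)\le c_1\phi(t)$ for all $t\in(0,1]$. Combined with $\phi(0)\le 0$ (forced by $\psi\le 0$ on $\BR^n\times\{0\}\subset\partial^*_pQ$ together with $\eta>0$ on the plateau), Gronwall gives $\phi\le 0$ for all $t$, so $\psi$ can never reach the value $1$ on $\rK^+_{3r}$. In particular your sentence claiming that $\phi' M\le c_0\phi/2$ is ``compatible with an initial rapid rise of $\phi$ from $0$ to a value $\ge 1$'' is exactly what that differential inequality rules out: no choice of $\phi$ and $\eta$ in product form can satisfy all three requirements simultaneously.

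The paper avoids this by taking a genuinely non-separable profile modeled on the heat kernel, $f(x,t)=e^{-\beta t}h(x,t)^{\gamma}$ with $h(x,t)=\min\{2^n,(4\pi t)^{-n/\sigma}\exp(-\alpha|x|^{\sigma}/t)\}$. At a point of the annulus at small time, $f_t$ is indeed large, but so is the nonlocal term, because the operator integrates over the neighborhood of the origin where $f$ is already large --- this is precisely the mechanism that a product ansatz decouples and loses. The inequality is then checked by self-similar scaling to $x=e_n$ and a fourth-order Taylor expansion of $e^{-\tilde\alpha\gamma|e_n\pm y|^{\sigma}}$, and the remaining small-time singularity near the spatial origin is removed by the truncation $\psi=\min(\max(f-\zeta,0),At)$, whose bad set is contained in the excluded region $\rK^-_{r/2}$. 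Your treatment of the plateau and of the $\sigma\to 2$ bookkeeping via the $(2-\sigma)$ normalization is reasonable, but the small-time regime requires a profile in which information propagates outward from the origin, not a tensor product.
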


\pf We set $$h(x,t)=\min\left\{2^n,\frac{1}{(4\pi
t)^{n/\sigma}}\exp\left(-\alpha\frac{|x|^\sigma}{t}\right)\right\},\,\,x\in\BR^n,t\in
I,$$ and $f(x,t)=e^{-\beta t}h(x,t)^{\gamma}$ for
$\ap,\bt,\gm>0$. If $|x|^{\sigma}\ge t$ and $0<t\leq
(2^{-\sigma}e^{-\alpha\sigma/n})/4\pi$, then we consider the
function $g(y,s)=|x|^n f(x,t)$ where $x=|x|y$ and $t=|x|^{\sm}s$.
Then $g(y,s)=e^{-\beta \eta^{\sigma}s}h(y,s)^{\gamma}$ for $\eta=|x|$ and we note that
$$\cL f(x,t)=\ds\f{1}{|x|^{n+\sm}}\cL
g\biggl(\f{x}{|x|},\f{t}{|x|^{\sm}}\biggr)\,\,\text{ and
}\,\,f_t(x,t)=\f{1}{|x|^{n+\sm}}g_t\biggl(\f{x}{|x|},\f{t}{|x|^{\sm}}\biggr).$$
If we choose the normalization $y\in S^{n-1}$, then it is
enough to show that there is some $\sm^*\in(1,2)$ so that
\begin{equation}\label{eq-7.1}
\cM^-_{\fL}g(e_n,s)\ge g_s(e_n,s)\,\,\text{ on $Q\s
\rK^-_{r/2}$}
\end{equation}
for $x=e_n=(0,0,\cdots,0,1)\in\BR^n$; for, the above inequality
follows by scaling and rotation for every other $x$ with $|x|=1$.
Then we have that, $\tilde{\beta}=\beta \eta^{\sm}$
\begin{equation}\label{eq-7.2}\begin{split}
g_s= e^{-\tilde{\beta}
t}h^{\gamma}\biggl(-\tilde{\beta}+\gamma\alpha\frac{|x|^\sigma}{s^2}-\frac{\gamma
n}{\sigma s}\biggr)\,,
\,\,g_t(e_n,s)=\f{e^{-\tilde{\beta} t}e^{-\gm\tilde{\ap}}}{(4\pi s)^{n\gm/\sm}}\biggl(-\tilde{\beta}+\frac{\gm\tilde{\ap}}{s^2}-\f{\gm
n}{\sm s}\biggr)\,.\end{split}\end{equation} By the Taylor's
expansion, we obtain that, for $\tilde{\alpha}=\frac{\alpha}{t}$,
\begin{equation}\label{eq-7.3}\begin{split}
&e^{-\tilde{\ap}\gm|e_n\pm y|^{\sm}}=e^{-\tilde{\ap}\gm}\mp\sm\tilde{\ap}\gm
e^{-\tilde{\ap}\gm}y_n+\f{1}{2}\tilde{\ap}\gm\sm\,e^{-\tilde{\ap}\gm}\bigl\{\bigl(\tilde{\ap}\gm\sm+(2-\sm)\bigr)y_n^2-|y|^2\bigr\}\\
&\quad\mp\f{1}{6}\tilde{\ap}\gm\sm\,e^{-\tilde{\ap}\gm}\biggl\{\bigl[\tilde{\ap}\gm\sm\bigl(\tilde{\ap}\gm\sm+3(2-\sm)\bigr)+(2-\sm)(4-\sm)\bigr]y_n^3\\
&\quad\qquad\qquad\qquad\qquad\qquad\qquad\qquad\quad-3\bigl(\tilde{\ap}\gm\sm+(2-\sm)\bigr)y_n|y|^2\biggr\}\\
&\quad+\f{1}{24}\tilde{\ap}\gm\sm(2-\sm)\,e^{-\tilde{\ap}\gm}\biggl\{2\tilde{\ap}\gm\sm\bigl(3\tilde{\ap}\gm\sm+2(4-\sm)\bigr)+(4-\sm)(6-\sm)\biggr\}y_n^4\\
&\quad+\f{1}{4}\tilde{\ap}\gm\sm(2-\sm)\,e^{-\tilde{\ap}\gm}\biggl\{\tilde{\ap}\gm\sm\bigl(\tilde{\ap}\gm\sm+2(2-\sm)\bigr)+(2-\sm)(4-\sm)\biggr\}y_n^2|y|^2\\
&\quad+\f{1}{24}\tilde{\ap}^4\gm^4\sm^4\,e^{-\tilde{\ap}\gm}y_n^4+\f{1}{8}\tilde{\ap}\gm\sm(2-\sm)\,e^{-\tilde{\ap}\gm}|y|^4+o(|y|^4).
\end{split}\end{equation}
If we denote by $\sm_k=\int_{S^{n-1}}\theta_n^k\,d\sm(\theta)$ for
$k\in\BN$, then we may now choose some large enough $\gm>1$ so that
\begin{equation}\label{eq-7.4}
I(\sm_2,\tilde{\ap},\gm,\sm)\fd\bigl(\tilde{\ap}\gm\sm+(2-\sm)\bigr)\sm_2-\om_n>0
\end{equation} for any
$\sm\in(0,2)$, where $\om_n$ denote the surface measure of $S^{n-1}$
and $r>0$ is the constant to be given just below. Since the $4^{th}$
order term of \eqref{eq-7.3} is positive, by \eqref{eq-7.4} there is
a sufficiently small $r\in(0,1/2)$ (and fix) so that
\begin{equation}\label{eq-7.5}\begin{split}&(4\pi t)^{n\gm/\sm}\mu(g,e_n,y,s)\ge\tilde{\ap}\gm\sm\,
e^{-\tilde{\beta}s-\tilde{\ap}\gm}\bigl\{\bigl(\tilde{\ap}\gm\sm+(2-\sm)\bigr)y_n^2-|y|^2\bigr\}\\
&\quad+\f{1}{24}\tilde{\ap}\gm\sm(2-\sm)\,e^{-\tilde{\beta}s-\tilde{\ap}\gm}\biggl\{2\tilde{\ap}\gm\sm\bigl(3\tilde{\ap}\gm\sm+2(4-\sm)\bigr)+(4-\sm)(6-\sm)\biggr\}y_n^4\\
&\quad+\f{1}{4}\tilde{\ap}\gm\sm(2-\sm)\,e^{-\tilde{\beta}s-\tilde{\ap}\gm}\biggl\{\tilde{\ap}\gm\sm\bigl(\tilde{\ap}\gm\sm+2(2-\sm)\bigr)+(2-\sm)(4-\sm)\biggr\}y_n^2|y|^2\\
&\quad+\f{1}{24}\tilde{\ap}^4\gm^4\sm^4\,e^{-\tilde{\beta}s-\tilde{\ap}\gm}y_n^4+\f{1}{8}\tilde{\ap}\gm\sm(2-\sm)\,e^{-\tilde{\beta}s-\tilde{\ap}\gm}|y|^4>0
\end{split}\end{equation} for any $y\in B_r$. Then for a small $\tau>0$ which will be chosen later,  we have the
estimate,  by \eqref{eq-7.5},

\begin{equation}\label{eq-7.6}\begin{split}
&(2-\sm)\ld(4\pi t)^{n\gm/\sm}\int_{B_r\backslash B_{\tau} }\f{\mu(g,e_n,y,t)}{|y|^{n+\sm}}\,dy
\ge\f{\ld\tilde{\ap}\gm\sm}{e^{\tilde{\beta}s+\tilde{\ap}\gm}}\,I(\sm_2,\tilde{\ap},\gm,\sm)\,(r^{2-\sm}-\tau^{2-\sm})\\
&\quad+\f{\ld}{24}\f{\sm_4}{e^{\tilde{\beta}s+\tilde{\ap}\gm}}\biggl[\tilde{\ap}\gm\sm(2-\sm)\biggl\{2\tilde{\ap}\gm\sm\bigl(3\tilde{\ap}\gm\sm+2(4-\sm)\bigr)+(4-\sm)(6-\sm)\biggr\}\\
&\quad\qquad\qquad\qquad\qquad\qquad\qquad\qquad\qquad\quad+\tilde{\ap}^4\gm^4\sm^4\biggr]\,\f{2-\sm}{4-\sm}\,(r^{4-\sm}-\tau^{4-\sm})\\
&\quad+\f{\ld}{4}\f{\sm_2\tilde{\ap}\gm\sm}{e^{\tilde{\beta}s+\tilde{\ap}\gm}}\biggl\{\tilde{\ap}\gm\sm\bigl(\tilde{\ap}\gm\sm+2(2-\sm)\bigr)+(2-\sm)(4-\sm)\biggr\}
\f{(2-\sm)^2}{4-\sm}\,(r^{4-\sm}-\tau^{4-\sm})\\
&\quad+\f{\ld\om_n}{8}\f{\tilde{\ap}\gm\sm}{e^{\tilde{\beta}s+\tilde{\ap}\gm}}\,\f{(2-\sm)^2}{4-\sm}(r^{4-\sm}-\tau^{4-\sm})\\
&\quad\fd\f{\ld\tilde{\ap}\gm\sm}{2e^{\tilde{\beta}s+\tilde{\ap}\gm}}\,I(\sm_2,\tilde{\ap},\gm,\sm)\,r^{2-\sm}
+e^{-\tilde{\beta}-\tilde{\alpha}\gamma}a(\ld,\sm_2,\sm_4,\tilde{\ap},\tilde{\beta},\gm,\sm)\,\f{2-\sm}{4-\sm}\,r^{4-\sm}
\end{split}\end{equation} where $$\lim_{\sm\to
2^-}a(\ld,\sm_2,\sm_4,\tilde{\ap},\tilde{\beta},\gm,\sm)=\f{\ld}{24}\,\sm_4(8+16\tilde{\ap}^4\gm^4)>0.$$
Since $\mu^-(g,e_n,y,s)\le \frac{2^{n+2}}{(4\pi s)^{n \gamma/\sigma}} e^{-\tilde{\beta}s-\tilde{\alpha}\gamma}$ on $\BR^n$, it follows
from \eqref{eq-7.2}, \eqref{eq-7.4}, \eqref{eq-7.5} and
\eqref{eq-7.6} that
\begin{equation*}\begin{split}
&\cM_{\fL}\,g(e_n,s)-g_t(e_n,s)\\
&\ge(2-\sm)\ld\int_{\BR^n}\f{\mu^+(g,e_n,y,s)}{|y|^{n+\sm}}\,dy-(2-\sm)\Ld\int_{\BR^n}\f{\mu^-(g,e_n,y,s)}{|y|^{n+\sm}}\,dy-g_t(e_n,t)\\
&\ge(2-\sm)\ld\int_{B_r}\f{\mu(g,e_n,y,s)}{|y|^{n+\sm}}\,dy-(2-\sm)\Ld\int_{\BR^n\s
B_r}\f{\mu^-(g,e_n,y,s)}{|y|^{n+\sm}}\,dy-g_t(e_n,t)\\
&\ge \frac{e^{-\tilde{\beta}s-\tilde{\ap}\gm}}{(4\pi s)^{n\gm/\sm}}\left[\ld\tilde{\ap}\gm\sm\,I(\sm_2,\tilde{\ap},\gm,\sm)\,r^{2-\sm}
+a(\ld,\sm_2,\sm_4,\tilde{\ap},\tilde{\beta},\gm,\sm)\,\f{2-\sm}{4-\sm}\,r^{4-\sm}\right.\\
&\qquad\qquad\qquad\qquad\qquad\quad\left. -2^{n+2}\Ld\om_n\f{2-\sm}{\sm}r^{-\sm}\right]-g_t(e_n,s).
\end{split}\end{equation*}
Thus we may take some $\sm^*\in(1,2)$ close enough to $2$ in the
above and some sufficiently small $\tilde{\beta}\in(0,1)$ with $\tilde{\ap}=n/\sm$ so
that $\cM_{\fL}\,g(e_n,s)-g_t(e_n,s)\ge 0$ for any
$\sm\in(\sm^*,2)$. To complete the proof, we take
$\psi(x,t)=\min(\max(f(x,t)-\zeta,0), At)$ with a small $\zeta>0$ such that 
$\supp [\max(f(x,t)-\zeta,0)]\subset Q$ and a large $A>0$ such that
$\{(x,t): \,\max(f(x,t)-\zeta,0)\geq At\}\subset B_{\tau}\times [0,\tau^{\sigma}]$ for $0<\tau<<r$.
Now $\psi(x,t)$ which satisfies (4.1.1).  \qed

\begin{cor}\label{cor-barrier-2}
Let $Q=B_1\times (0,1]$ and $0<r<1/(2\sqrt{n})$. Given
$\sm_0\in(0,2)$, there is some very small $\dt\in(0,1)$ and a
subsolution $\psi^{\dt}(x,t)$ such that for any $\sm\in (\sm_0,2)$,
\begin{equation}
\begin{cases}
\psi^{\dt}_t(x,t)\leq \cM^-_{\fL} \psi^{\dt}(x,t),\quad (x,t)\in Q\s \rK^-_{r/2}\\
\psi^{\dt}(x,t)\geq 1\quad\text{in $\rK^+_{3r}$}\\
\psi^{\dt}(x,t)\leq 0\quad\text{on $\partial^*_p Q$}.
\end{cases}
\end{equation}\end{cor}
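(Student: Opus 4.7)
The task is to extend Lemma \ref{lem-barrier-1}---which produces a subsolution $\psi$ only in the window $\sm\in(\sm^*,2)$ with $\sm^*$ close to $2$---to a family $\psi^\dt$ valid uniformly for every $\sm\in(\sm_0,2)$ with arbitrary $\sm_0\in(0,2)$. Inspecting the proof of Lemma \ref{lem-barrier-1}, the restriction $\sm>\sm^*$ enters only at the final balancing step, where the local positive contribution $\ld\tilde{\ap}\gm\sm\,I(\sm_2,\tilde{\ap},\gm,\sm)\,r^{2-\sm}$ from the Taylor expansion inside $B_r$ must dominate the spatial tail loss $2^{n+2}\Ld\om_n(2-\sm)r^{-\sm}/\sm$. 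For $\sm$ near $2$ both terms carry the compensating factor $(2-\sm)$, and the ratio $r^{2-\sm}/r^{-\sm}=r^{2}$ suffices; for $\sm$ bounded below $\sm^*$ the tail is no longer absorbed.

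The plan is to re-run the same construction with a small parameter $\dt$ that amplifies the local term. Fixing $\sm_0\in(0,2)$, I would take $\dt\in(0,1)$ small and set $f_\dt(x,t)=e^{-\bt t}h(x,t)^{\gm/\dt}$, then repeat the Taylor expansion \eqref{eq-7.3} with the exponent $\gm$ replaced by $\gm/\dt$. The local contribution then scales like $(\gm/\dt)^{2}\,r^{2-\sm}$, while the tail remains bounded by a universal constant times $(2-\sm)\,r^{-\sm}$. Choosing $\dt$ sufficiently small, depending only on $\sm_0$, $r$, $n$, $\ld$ and $\Ld$, forces the local term to dominate uniformly for every $\sm\in(\sm_0,2)$ and yields $\cM^-_{\fL}g_\dt-\pa_t g_\dt\ge 0$. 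Setting $\psi^\dt(x,t)=\min(\max(f_\dt(x,t)-\zt,0),A\,t)$ with suitable $\zt,A>0$ then gives $\psi^\dt_t\le\cM^-_{\fL}\psi^\dt$ on $Q\s\rK^-_{r/2}$, as required.

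The boundary conditions $\psi^\dt\le 0$ on $\pa^*_p Q$ and the smoothing by $A\,t$ are handled by the truncation parameters exactly as in the proof of Lemma \ref{lem-barrier-1}. The hard part will be verifying the lower bound $\psi^\dt\ge 1$ on $\rK^+_{3r}$ after the exponent is enlarged to $\gm/\dt$: this enlargement concentrates $h^{\gm/\dt}$ sharply near the time axis $t=0$, whereas $\rK^+_{3r}$ lies at positive times $t\gc r^\sm$. The quantitative point to check is that, on $\rK^+_{3r}$, the exponent $-\ap(\gm/\dt)|x|^\sm/t$ stays bounded below by a $\sm$-uniform constant once $r$ is chosen small, so that $h^{\gm/\dt}\ge c>0$ there uniformly in $\sm\in(\sm_0,2)$; balancing this lower bound against the smallness of $\dt$ required for the subsolution inequality is what ultimately fixes the admissible $\dt$ and $r$.
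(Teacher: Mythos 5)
Your reduction of the problem to ``make the local Taylor term beat the tail'' correctly identifies where Lemma \ref{lem-barrier-1} uses $\sm>\sm^*$, but the proposed fix has a genuine gap and is not the paper's route. The local contribution $\tilde{\ap}\gm\sm\,I(\sm_2,\tilde{\ap},\gm,\sm)\,r^{2-\sm}$ is only available on a ball $B_r$ where the expansion \eqref{eq-7.3} is actually controlled, i.e.\ where the quadratic term dominates the fourth-order terms and the remainder; since the $k$-th derivative of $y\mapsto e^{-\tilde{\ap}\gm|e_n+y|^{\sm}}$ near $y=0$ grows like $(\tilde{\ap}\gm)^k e^{-\tilde{\ap}\gm}$, this forces $r\lesssim(\tilde{\ap}\gm)^{-1}$. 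Replacing $\gm$ by $\gm/\dt$ therefore shrinks the admissible radius to $r\sim c\,\dt/(\tilde{\ap}\gm)$, and the local-to-tail ratio becomes, up to fixed constants, $(\gm/\dt)^{2}r^{2}/(2-\sm)\sim c^{2}/(2-\sm)$: the amplification of the exponent is exactly cancelled by the loss of radius, and you are left with the same restriction ``$\sm$ close to $2$'' you started with. This is not an accident of the computation: the second-order Taylor mechanism is intrinsically a near-Laplacian mechanism and cannot yield a barrier uniformly down to small $\sm_0$. (You also flag but do not resolve the lower bound $\psi^{\dt}\ge 1$ on $\rK^+_{3r}$, which with exponent $\gm/\dt$ would in fact degenerate as $\dt\to 0$.)

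The paper's proof uses a genuinely nonlocal mechanism for $\sm_0<\sm\le\sm^*$. It raises the truncation level of $h$ from $2^{n}$ to $\dt^{-n}$, so that $g_{\dt}$ has a much higher and steeper peak near the origin, and then splits $\cM^-_{\fL}g_{\dt}(e_n,s)=\cJ_1+\cJ_2$. The negative part $\cJ_2$ is bounded below by $-c_0$ times $g_{\dt}(e_n,s)$ with $c_0$ depending only on $\sm_0,\Ld,n$, while the positive part $\cJ_1$ is estimated by integrating $\mu^{+}\gtrsim e^{-\tilde{\ap}\gm(1-\dt/2)^{\sm}}$ over an annulus of radius comparable to $\dt$ against the kernel, producing a factor $\dt^{-\sm}=(\gm\sm)^{\sm}$ after the choice $\dt=1/(\gm\sm)$; taking $\gm$ large then makes $\cJ_1$ dominate uniformly for $\sm\in(\sm_0,\sm^*]$. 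In short, the gain comes from a short jump toward the peak seen through the singular kernel, not from convexity of the profile; that is the idea your proposal is missing.
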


\pf Let $\sm^*\in (1,2)$ be the number of Lemma \ref{lem-barrier-1}.
Without loss of generality, we may assume that $\sm_0<\sm^*$. Lemma
\ref{lem-barrier-1} implies that the result of our corollary always
holds for $\sm\in(\sm^*,2)$, when $\dt=1/2$. We set
$$h_{\dt}(x,t)=\min\left\{\dt^{-n},\frac{1}{(4\pi
t)^{n/\sigma}}\exp\left(-\alpha\frac{|x|^\sigma}{t}\right)\right\},\,\,x\in\BR^n,t\in 
I,$$ and $f_{\dt}(x,t)=e^{-\tilde{\beta}
t}h_{\dt}(x,t)^{\gamma}$ for $\ap,\bt,\gm>0$. If
$|x|^{\sigma}\ge t$ and $0<t\leq
(\dt^{\sigma}e^{-\alpha\sigma/n})/4\pi$, then we consider the
function $g_{\dt}(y,s)=|x|^n f_{\dt}(x,t)$ where $x=|x|y$ and
$t=|x|^{\sm}s$. Set $\mu=|x|$. Then $g_{\dt}(y,s)=e^{-\beta \mu^{\sm} s
}h_{\dt}(y,s)^{\gamma}$ and we note that
$$\cL f_{\dt}(x,t)=\ds\f{1}{|x|^{n+\sm}}\cL
g_{\dt}\biggl(\f{x}{|x|},\f{t}{|x|^{\sm}}\biggr)\,\,\text{ and
}\,\,f_{\dt, t}(x,t)=\f{1}{|x|^{n+\sm}}g_{\dt,t}\biggl(\f{x}{|x|},\f{t}{|x|^{\sm}}\biggr).$$
If $\dt<1/2$, then the result still holds for $\sm\in(\sm^*,2)$
because $\mu(f_{\dt},x,y,t)\ge\mu(f_{1/2},x,y,t)$ for any
$x\in\BR^n$ and $t\in I$.

Now we let $x=e_n$ as in the proof of Lemma
\ref{lem-barrier-1}. Assume that $\sm_0<\sm\le\sm^*$. Then we write
\begin{equation*}
\begin{split}
\cM^-_{\fL}\,g_{\dt}(e_n,1)&=(2-\sm)\ld\int_{\BR^n}\f{\mu^+(g_{\dt},e_n,y,s)}{|y|^{n+\sm}}\,dy
-(2-\sm)\Ld\int_{\BR^n}\f{\mu^-(g_{\dt},e_n,y,s)}{|y|^{n+\sm}}\,dy\\
&\fd\cJ_1 g_{\dt}(e_n,s)+\cJ_2
g_{\dt}(e_n,s).
\end{split}
\end{equation*}
 If we take some $h>0$ and
$\dt\in(0,1)$ small enough so that $\mu^-(g_{\dt},e_n,y,s)=0$ for
any $y\in B_{1+h}$, from \eqref{eq-7.3} and simple geometric
observation it is easy to check that there is some $c>0$ not
depending on $\ap,\bt,\gm$ and $\sm$ such that, for $\tilde{\ap}=\frac{\ap}{t}$,
 $$\mu^-(g_{\dt},e_n,y,s)\le c\,\f{2 e^{-\bt\mu^{\sm} s-\tilde{\ap}\gm}}{(4\pi s)^{\gm n/\sm}}$$ for any $y\in B_{1+h}^c$. Thus we
 have that 
\begin{equation*}
\begin{split}
-\cJ_2 g_{\dt}(e_n,s)&=(2-\sm)\Ld\int_{|y|\ge
 1+h}\f{\mu^-(g_{\dt},e_n,y,s)}{|y|^{n+\sm}}\,dy\\
 &\le(2-\sm_0)\Ld\f{2 e^{-\bt\mu^{\sm} s-\tilde{\ap}\gm}}{(4\pi s)^{\gm n/\sm}}\int_{|y|\ge
 1+h}\f{c}{|y|^{n+\sm_0}}\,dy.
\end{split}
\end{equation*}
 Since $\sm_0\in(0,2)$, we see that
 $\ds\cJ_2 g_{\dt}(e_n,s)\ge-c_0\f{e^{-\bt\mu^{\sm} s-\tilde{\ap}\gm}}{(4\pi s)^{\gm n/\sm}}$ for a constant $c_0>0$ depending only on
 $\sm_0,\Ld$ and the dimension $n$. On the other hand, from the geometric observation
 we see that 
\begin{equation*}\begin{split}
\mu^+(g_{\dt},e_n,y,s)&=\mu(g_{\dt},e_n,y,s)+\mu(g_{\dt},e_n,-y,s)\\&\ge\mu(g_{\dt},e_n,y,s)\ge 0
\end{split}
\end{equation*}
 for any $y\in B_1$. Since the following inequality
\begin{equation}
\begin{split}
\mu(g_{\dt},e_n,y,s)&=\f{e^{-\bt \mu^{\sm} s}}{(4\pi s)^{\gm
 n/\sm}}\bigl[e^{-\tilde{\ap}\gm|e_n+y|^{\sm}}-e^{-\tilde{\ap}\gm}\bigl(1-\tilde{\ap}\gm\sm
 y_n\chi_{B_1}(y)\bigr)\bigr]\\
&\ge\f{e^{-\bt}\mu^{\sm} s}{(4\pi s)^{\gm
 n/\sm}}\bigl[e^{-\tilde{\ap}\gm|e_n+y|^{\sm}}+e^{-\tilde{\ap}\gm}(\dt\tilde{\ap}\gm\sm-1)\bigr]\\
&\ge\f{e^{-\bt}\mu^{\sm} s}{(4\pi s)^{\gm
 n/\sm}}\bigl[e^{-\tilde{\ap}\gm (1-\delta/2)^{\sm}}+e^{-\tilde{\ap}\gm}(\dt\tilde{\ap}\gm\sm-1)\bigr]\\
&\ge\f{e^{-\bt}\mu^{\sm} s}{2(4\pi s)^{\gm
 n/\sm}}\bigl[e^{-\tilde{\ap}\gm (1-\delta/2)^{\sm}}\bigr]
\end{split}
\end{equation}
 holds for any $y$ with
 $\dt/2<|y-5\dt/2|<3\dt/2$ and $y\cdot e_n>\frac12 |y|$, if we set $\dt=1/(\gm\sm)$ then we have that
 \begin{equation*}
\begin{split}
\cJ_1 g_{\dt}(e_n,s)&\ge\f{(2-\sm)\ld e^{-\bt\mu^{\sm} s}\mu^{\sm} s}{(4\pi s)^{\gm
 n/\sm}}\int_{\f{\dt}{2}<|y-\f{5\dt}{2}|<\f{3\dt}{2}}\f{e^{-\tilde{\ap}\gm|e_n+y|^{\sm}}}{|y|^{n+\sm}}\,dy\\
 &\ge\f{(2-\sm)\ld}{(4\pi )^{\gm
 n/\sm}( s)^{\gm
 n/\sm -1}}\f{e^{-\bt \mu^{\sm} s-\tilde{\ap}\gm(1-\dt/2)^{\sm}}}{(4\dt)^{n+\sm}}\biggl[\biggl(\f{3\dt}{2}\biggr)^n-\biggl(\f{\dt}{2}\biggr)^n\biggr]\\
 &=(2-\sm)\ld\,\f{e^{-\bt \mu^{\sm} s-\tilde{\ap}\gm(1-\dt/2)^{\sm}}}{(4\pi )^{\gm
 n/\sm}( s)^{\gm
 n/\sm -1}}\f{1}{4^{n+\sm}\dt^{\sm}}\f{3^n
 -1}{2^n}.
\end{split}
\end{equation*}
If we select some sufficiently large $\gm>1$ so that
 $\dt=1/(\gm\sm)$ is very small and $\ds\cJ_1
g_{\dt}(e_n,s)>c_0\,\f{e^{-\bt\mu^{\sm} s-\tilde{\ap}\gm}}{(4\pi )^{\gm
 n/\sm}( s)^{\gm
 n/\sm -1}}$, then
we can complete the proof by taking
 $\psi^{\dt}(x,t)=\min(\max(f_{\dt}(x,t)-\eta,0),At)$ for a small $\eta>0$ and a large $A>0$ as Lemma \ref{lem-barrier-1}.\qed

\begin{lemma}\label{lem-barrier-3} Let $Q=B_1\times(0,1]$, $\pa^*_p
Q=(\BR^n\times [0,1])\s Q$ and $r\in(0,1/(3\sqrt n))$. Given any
$\sm_0\in(0,2)$, there exists a function $\Psi\in\rB(\BR^n\times [0,1])$
such that

$(a)$ $\Psi$ is continuous on $\BR^n\times [0,1]$, $(b)$ $\Psi\le 0$ on
$\pa_p Q$, $(c)$ $\Psi>2$ on $\rK^+_{3r}$,

$(d)$ $\Psi\le M$ on $\BR^n\times [0,1]$ for some $M>1$, $(e)$
$\cM^-_{\fL}\Psi$ is continuous on $Q$,

$(f)$ $\cM^-_{\fL}\Psi-\Psi_t>-\psi$ on $Q$ where $\psi$
is a positive bounded function on $\BR^n\times [0,1]$ which is supported
on $\overline {\rK_{r/2}^-}\,$, for any $\sm\in(\sm_0,2)$.
\end{lemma}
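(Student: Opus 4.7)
The plan is to build $\Psi$ by rescaling the subsolution $\psi^{\delta}$ produced in Corollary \ref{cor-barrier-2}, and then to absorb the defect of the equation on $\overline{\rK^-_{r/2}}$ into the right-hand side $\psi$. Fix $\sigma_0 \in (0,2)$ and let $\psi^{\delta}$ be the function given by Corollary \ref{cor-barrier-2} for this $\sigma_0$. Since $\psi^{\delta} \ge 1$ on $\rK^+_{3r}$, I will choose a scaling factor $c_0 > 2$ and set
\begin{equation*}
\Psi(x,t) := c_0 \, \psi^{\delta}(x,t) \quad \text{on } \BR^n \times [0,1].
\end{equation*}
By construction $\psi^{\delta}(x,t) = \min(\max(f_{\delta}(x,t)-\eta,0), A t)$, so $\Psi$ is continuous and Lipschitz on $\BR^n \times [0,1]$, vanishes on $\partial_p^* Q \supset \partial_p Q$ (giving properties (a), (b)), satisfies $\Psi > 2$ on $\rK^+_{3r}$ (property (c)), and is bounded above by $M := c_0 \|\psi^{\delta}\|_{\infty}$ (property (d)).

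Next, on $Q \setminus \rK^-_{r/2}$ the subsolution property of $\psi^{\delta}$ gives, by positive homogeneity of $\cM^-_{\fL}$,
\begin{equation*}
\cM^-_{\fL}\Psi(x,t) - \Psi_t(x,t) \ge 0 \quad \text{on } Q \setminus \overline{\rK^-_{r/2}},
\end{equation*}
so (f) is automatic there for any nonnegative $\psi$. On the set $\overline{\rK^-_{r/2}}$, the quantity $\cM^-_{\fL}\Psi - \Psi_t$ is bounded because $\Psi$ is bounded, Lipschitz, and piecewise $C^{1,1}_x$ in the interior of the supports of its constituents. Letting
\begin{equation*}
\psi(x,t) := \big(\cM^-_{\fL}\Psi(x,t) - \Psi_t(x,t)\big)^{-} \chi_{\overline{\rK^-_{r/2}}}(x,t) + 1 \cdot \chi_{\overline{\rK^-_{r/2}}}(x,t),
\end{equation*}
one gets a bounded nonnegative function supported on $\overline{\rK^-_{r/2}}$ with $\cM^-_{\fL}\Psi - \Psi_t > -\psi$ on all of $Q$, delivering (f). The bound on $\psi$ is uniform in $\sigma \in (\sigma_0, 2)$ because the kernels of $\fL$ are uniformly bounded above by $(2-\sigma_0)\Lambda |y|^{-n-\sigma_0}$ near infinity and $\Psi$ is smooth enough near its gradient-continuity set.

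The main obstacle is verifying property (e), namely the continuity of $\cM^-_{\fL}\Psi$ on $Q$. Because $\psi^{\delta}$ is defined as a nested $\min/\max$ of smooth functions, $\Psi$ is only Lipschitz, not $C^2$, and the nonlocal integrand $\mu(\Psi,x,y,t)$ has kinks where the min/max switches branches. To handle this, I would check that $\Psi \in C^{1,1}_{x,-}$ uniformly (as $f_{\delta}$ is smooth and the outer truncations with $0$ and $At$ preserve semiconcavity), so that the integral defining $\cM^-_{\fL}\Psi$ makes sense pointwise. Continuity in $(x,t)$ then follows by dominated convergence: split the integral into $B_{\rho}$ and its complement, estimate the inner piece using the uniform quadratic bound provided by $\rC^{1,1}_{x,-}$ regularity together with $(2-\sigma)\int_{B_{\rho}} |y|^{2-n-\sigma}\,dy \to 0$ as $\rho \to 0$, and pass to the limit in the outer piece using the uniform Lipschitz continuity of $\Psi$ in $x$ and $t$. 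A small mollification of $\Psi$ (preserving (a)--(d) and the subsolution property up to a vanishing error) can be used if the direct argument is not clean enough.

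Finally, I would verify that the scaling factor $c_0$, the constant $M$, and the $L^{\infty}$ bound on $\psi$ can be chosen independently of $\sigma \in (\sigma_0, 2)$, exactly as in Corollary \ref{cor-barrier-2}, which is what the statement requires.
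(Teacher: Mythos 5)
Your proposal is correct and follows essentially the same route as the paper: set $\Psi=c\,\psi^{\dt}$ with $\psi^{\dt}$ from Corollary \ref{cor-barrier-2}, read off $(a)$--$(d)$ from the construction, and obtain $(f)$ from the subsolution inequality off $\overline{\rK^-_{r/2}}$ while absorbing the bounded defect on $\overline{\rK^-_{r/2}}$ into $\psi$. You are in fact more careful than the paper on point $(e)$, where the paper simply asserts $\Psi\in\rC^{1,1}_x(Q)$ despite the kinks introduced by the $\min/\max$ truncation; your dominated-convergence/mollification remark addresses exactly the detail the paper glosses over.
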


\pf We consider the function $\Psi=c\,\psi^{\dt}$ for $c>0$. And we
choose some constants $c>0$ and $\dt>0$ so that $\Psi>2$ on
$Q^+_{3r}$ and $\Psi\le M$ for some $M>1$. Since
$\Psi\in\rC_x^{1,1}(Q)$, we see that $\cM^-_{\fL}\Psi$ is continuous
on $Q$. Also by Corollary \ref{cor-barrier-2} we see that
$\cM^-_{\fL}\Psi-\Psi_t\ge 0$ on $(\BR^n\times I)\s \rK_{r/2}^-$.
Hence we complete the proof. \qed

\subsection{Key Lemmas}\label{sec-decay-key}

Now we are going to show a decay estimate of the upper level set at past time depending on the future value , which will be a key step to have geometric decay rate of the  upper level set in dyadic rectangles.

\begin{lemma}\label{lem-decay-key} Let $\sm_0\in (0,2)$ and $r_0=1/(9\sqrt{n}).$  If $\,\sm\in
(\sm_0,2)$ , then there exist some constants $\vep_0>0$, $\nu\in
(0,1)$ and $M>1$ $($depending only on $\sm_0,\ld,\Ld$ and the
dimension $n$$)$ for which if $\,u\in\rB(\BR^n\times I)$ is a
viscosity supersolution to $\cM^-_{\fL} u-u_t\le\vep_0$  on $Q$ such
that $u\ge 0$ on $\BR^n\times I$ and $\inf_{\rK^+_{3r_0}}u\le 1$,
then
 $\,|\{u\le M\}\cap \rK^-_{r_0}|\ge\nu |\rK^-_{r_0}|$
\end{lemma}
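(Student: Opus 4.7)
The plan is to apply the parabolic A-B-P estimate (Theorem \ref{thm-abp}) to the auxiliary function $v=\Psi-u$, where $\Psi$ is the barrier produced by Lemma \ref{lem-barrier-3} with the choice $r=r_0=1/(9\sqrt n)$ (admissible since $r_0<1/(3\sqrt n)$). Recall that $\Psi$ satisfies $\Psi\le 0$ on $\pa_p Q$, $\Psi>2$ on $\rK^+_{3r_0}$, $\Psi\le M$ on $\BR^n\times[0,1]$, and $\cM^-_{\fL}\Psi-\Psi_t\ge -\psi$ with $\psi\ge 0$ bounded and $\supp\psi\subset\overline{\rK^-_{r_0/2}}$.

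First I would show that $v$ is a viscosity subsolution of $\cM^+_{\fL}v-v_t\ge -(\psi+\vep_0)$ on $Q$. Using the elementary inequality $\cM^+_{\fL}(A-B)\ge\cM^-_{\fL}A-\cM^-_{\fL}B$ together with $\cM^-_{\fL}u-u_t\le\vep_0$ and property $(f)$ of $\Psi$, the wanted inequality holds pointwise in the sense of the classical operators; passing to the viscosity sense is handled exactly as in Theorem \ref{thm-2.0.4} by subtracting the $\rC_x^{1,1}$ barrier $\Psi$ from the viscosity supersolution $u$. The boundary behavior is also straightforward: since $\Psi\le 0$ off $Q$ and $u\ge 0$, we have $v\le 0$ on $\pa_p^* Q_{1/2}$; on the other hand, the hypothesis $\inf_{\rK^+_{3r_0}}u\le 1$ combined with $\Psi>2$ there produces a point of $\rK^+_{3r_0}\subset Q_1$ at which $v\ge 1$, so $\sup_{Q_1}v^+\ge 1$.

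Next I would invoke Theorem \ref{thm-abp} with $f=\psi+\vep_0$. This yields a family of dyadic rectangles $\{K_k\}$, each meeting the contact set $\cC(v,\Gm,Q_1)$, such that
\[
1\le\bigl(\sup_{Q_1}v^+\bigr)^{n+1}\le C\sum_k\Bigl(\sup_{\overline{K_k}}f+r_k\sup_{Q_1}f\Bigr)^{n+1}|\cC(v,\Gm,Q_1)\cap K_k|.
\]
Split the sum into cubes $K_k$ that meet $\overline{\rK^-_{r_0/2}}$ and those that do not. On the latter, $\sup_{\overline{K_k}}\psi=0$, so the contribution is bounded by a universal constant times $\vep_0^{n+1}|Q_1|$, which I choose $\vep_0$ small enough to make at most $1/2$. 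The remaining cubes are confined to a fixed neighborhood of $\overline{\rK^-_{r_0/2}}$ which, together with the diameter bound $r_k\le 2\rho_0 2^{-1/(2-\sm)}$, can be arranged to lie inside $\rK^-_{r_0}$. Using that $\|\psi\|_\infty$ is universal, this forces
\[
|\cC(v,\Gm,Q_1)\cap\rK^-_{r_0}|\ge c>0
\]
for some universal constant $c$ depending only on $\sm_0,\ld,\Ld,n$. Finally, on $\cC(v,\Gm,Q_1)$ the envelope touches $v$ from above, so $v\ge 0$ there, i.e.\ $u\le\Psi\le M$; therefore the contact set is contained in $\{u\le M\}$, and setting $\nu=c/|\rK^-_{r_0}|$ concludes the proof.

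The main technical obstacle I anticipate is the bookkeeping in the A-B-P cube sum: unlike the elliptic case, the cubes $K_k$ here extend into the future and one must carefully quantify how close to $\supp\psi\subset\overline{\rK^-_{r_0/2}}$ they can be while still meeting the contact set, so as to localize them inside $\rK^-_{r_0}$. This is essentially the \emph{time-delay} phenomenon highlighted in the introduction, and it is exactly where the interplay between the local $\pa_t$ and the nonlocal $\cM^\pm_{\fL}$ shows up; the choice $r_0=1/(9\sqrt n)$ (with the factor of $9$ rather than $2\sqrt n$ or the like) appears to be tuned precisely to absorb the corresponding constants.
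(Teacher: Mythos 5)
Your proposal follows essentially the same route as the paper: form $v=\Psi-u$ with the barrier of Lemma \ref{lem-barrier-3}, verify that $v$ is a subsolution of $\cM^+_{\fL}v-v_t\ge-(\psi+\vep_0)$ with $v\le 0$ on $\pa^*_pQ$ and $\sup v>1$, apply the parabolic A-B-P estimate of Theorem \ref{thm-abp} with $f=\psi+\vep_0$, split the cube sum according to whether a cube meets $\supp\psi\subset\overline{\rK^-_{r_0/2}}$, and conclude via $K_j\cap\cC(v,\Gm,Q)\subset\{u\le\Psi\}\subset\{u\le M\}$ with the cubes localized in $\rK^-_{r_0}$. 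The one point to tighten is your bound for the cubes missing $\supp\psi$: their contribution also carries the term $r_k\sup_{Q_1}f\ge r_k\|\psi\|_{L^\iy}$, so it is made small not by the choice of $\vep_0$ alone but by the smallness of the cube diameters (the paper takes $r_j<\tfrac{r_0}{10^n}\,2^{-1/(2-\sm)}$), which is the same diameter bound you already invoke for the localization step.
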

\pf We consider the function $v:=\Psi-u$ where $\Psi$ is the special
function constructed in Lemma \ref{lem-barrier-3}. Then we easily
see that $v$ is upper semicontinuous on $\overline Q$ and $v$ is not
positive on $\partial^*_p Q$. Moreover, $v$ is a viscosity subsolution
to $\cM^+_{\fL} v-v_t\ge\cM^-_{\fL}\Psi-\Psi_t -(\cM^-_{\fL}
u-u_t)\ge-(\psi+\vep_0)$ on $Q$. So we want to apply Theorem
\ref{thm-abp}
 to $v$. Let $\Gm$ be the concave envelope of $v$ in
$Q$. Since $\inf_{\rK^+_{3r_0}}u\le 1$ and
$\inf_{\rK^+_{3r_0}}\Psi>2$ , we easily see that $M_0:=\sup_{
\rK^+_{3r_0}}v=v(x_0)>1$ for some $x_0\in \rK^+_{3r_0}$. We also
observe as shown in \cite{CC} that
\begin{equation}\label{eq-lem-decay-key-0}
\bigl|\n\Gm\bigl(Q\s\cC(v,\Gm,Q),t\bigr)\bigr|=0
\end{equation}
for each $t$. Let $\{K_j\}$ be the family of cubes given by Theorem
\ref{thm-abp}  with $0<r_j<\frac{r_0}{10^n}2^{-\f{1}{2-\sm}}$. Then
it follows from \eqref{eq-lem-decay-key-0} and Theorem \ref{thm-abp}
that
\begin{equation}\label{eq-decay-key-1}\begin{split}1&<\bigl(\sup_{Q}v\bigr)^{n+1}\le
C\,\int_{\cC(v,\Gm,Q) }\partial_t\Gm (y,s)\det[D^2\Gm(y,s)]^-\,dyds\\
&\qquad\le C\biggl(\sum_j\biggl(\sup_{\overline
K_j}(\psi+\vep_0)^{n+1} +r_j\sup_{Q}(\psi+\vep_0)^{n+1} \biggr)|K_j\cap\cC(v,\Gm,Q)|\biggr)\\
&\qquad\le C\vep_0+ C\biggl(\sum_j(\sup_{\overline K_j}\psi)^{n+1}
|K_j\cap\cC(v,\Gm,Q)|\biggr)+\frac14
\end{split}\end{equation} for a snall $r_0$ and some universal constant $C>0$.
 If we choose $\vep_0$ small enough, the
above inequality \eqref{eq-decay-key-1} implies that
$$\f{1}{2^{1/(n+1)}} \le C\biggl(\sum_j(\sup_{\overline
K_j}\psi)^{n+1} |K_j\cap\cC(v,\Gm,Q)|\biggr)^{1/(n+1)}.$$ We recall
from the proof of Lemma \ref{lem-barrier-3} that $\psi$ is supported
on $\overline{ \rK^-_{r_0/2}}$ and bounded on $\BR^n\times I$. Thus
the above inequality becomes $$\f{1}{2} \le
C\biggl(\,\sum_{K_j\cap\overline{\rK^-_{r_0/2}}\neq\phi}
|K_j\cap\cC(v,\Gm,Q)|\,\biggr),$$ which provides a lower bound for
the sum of the volumes of the cones $K_j$ intersecting $\overline{
\rK^-_{r_0/2}}$ as follows;
\begin{equation}\label{eq-decay-key-2}\sum_{K_j\cap
\overline{ \rK^-_{r_0/2}}\neq\phi} |K_j\cap\cC(v,\Gm,Q)|\ge \nu
|\rK^-_{r_0}|.
\end{equation}
Since
$\diam(K_j)\le\frac{r_0}{10^n}2^{-\f{1}{2-\sm}}\le\frac{r_0}{10^n}$
for any $\sm\in(\sm_0,2)$, each cone $K_j$ is contained in
$\rK^-_{r_0}$ for any $K_j$ with $K_j\cap \rK^-_{r_0/2}\neq\phi$.
Thus by Lemma \ref{lem-barrier-3} we have that
$$K_j\cap\cC(v,\Gm,Q)\subset \{v\geq 0\}=\{u\leq \Psi\}\subset \{u\leq M\}$$
for all $j$ with $K_j\cap\overline{ \rK^-_{r_0/2}}\neq\phi$. Hence
we conclude $\,|\{u\le M\}\cap \rK^-_{r_0}|\ge\nu
|\rK^-_{r_0}|$.\qed

\begin{definition} $(i)$ We say that $u$ has a tangent paraboloid of aperture $h>0$ below at
$(x_0,t_0)$ if there is a quadratic polynomial $$P(x,t)=c+b_i
(x^i-x_0^i)+h(-\frac12 |x-x_0|^2+(t_0-t))$$ such that
\begin{equation}
\begin{cases}
u(x,t)\geq P(x,t)\,\text{in $Q\cap \{t\leq t_0\}$}\\
u(x_0,t_0)=P(x_0,t_0).
\end{cases}
\end{equation}

$(ii)$ We denote by $\cG^u_h$  the set of points where $u$ has
global tangent paraboloid of aperture $h>0$ from below.
          And we set $\cB^u_h=K_1\s \cG^u_h$.
\end{definition}
From the same reason as \cite{W1}, we will have the following corollary.
\begin{cor} \label{cor-decay-key}
Under the same condition as Lemma \ref{lem-decay-key}, there exist
some constants $\vep_0>0$, $\nu\in (0,1)$ and $M>1$ $($depending
only on $\sm_0,\ld,\Ld$ and the dimension $n$$)$ for which if
$\,u\in\rB(\BR^n\times I)$ is a viscosity supersolution to
$\cM^-_{\fL} u-u_t\le\vep_0$  on $Q$ such that $u\ge 0$ on
$\BR^n\times I$ and $\inf_{\rK^+_{3r_0}}u\le 1$, or
$\rK^+_{3r_0}\cap \cG^u_1\neq \emptyset$,  then
 $\,|\cG^u_M\cap \rK^-_{r_0}|\ge\nu |\rK^-_{r_0}|$
\end{cor}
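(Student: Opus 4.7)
The plan is to refine the proof of Lemma \ref{lem-decay-key} in two ways. \emph{First}, we upgrade the pointwise conclusion $\{u\le M\}$ to the shape-based conclusion $\cG^u_M$ by examining the contact points of $v=\Psi-u$ with its concave envelope $\Gm$ more carefully. \emph{Second}, we verify that the alternative hypothesis $\rK^+_{3r_0}\cap\cG^u_1\neq\emptyset$ reduces to the first via a paraboloid-subtraction trick, so both inputs trigger the same ABP mechanism.

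For the upgrade, let $(x_*,t_*)\in\cC(v,\Gm,Q)$ be an arbitrary contact point. Then $v(x_*,t_*)=\Gm(x_*,t_*)$ and $v\le\Gm$ on $Q$, so $u=\Psi-v\ge\Psi-\Gm$ on $Q$ with equality at $(x_*,t_*)$. I will convert this contact relation into a tangent-paraboloid bound from below for $u$. The three ingredients are: (i) $\Psi\in\rC^{1,1}_x\cap\rC^{0,1}_t(\overline Q)$ by its explicit construction in Lemma \ref{lem-barrier-3}, so for $t\le t_*$ one has $\Psi(x,t)\ge\Psi(x_*,t_*)+\nabla_x\Psi(x_*,t_*)\cdot(x-x_*)-C|x-x_*|^2-L(t_*-t)$; (ii) $\Gm$ is concave in $x$ and nondecreasing in $t$, yielding $\Gm(x,t)\le\Gm(x,t_*)\le\Gm(x_*,t_*)+p\cdot(x-x_*)$ for a supergradient $p$ at $(x_*,t_*)$ and $t\le t_*$; (iii) the Taylor-type expansion at contact points from Corollary \ref{cor-abp-q} together with the time control $\partial_t\Gm\le C(\psi+\vep_0)$ from Lemma \ref{lem-abp-k} and Corollary \ref{cor-abp-t}. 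Subtracting (ii) from (i) and absorbing all quadratic/linear error terms into a single universal aperture $M$ produces a tangent quadratic of precisely the form used to define $\cG^u_M$; hence $(x_*,t_*)\in\cG^u_M$. Since the proof of Lemma \ref{lem-decay-key} (via \eqref{eq-decay-key-1}--\eqref{eq-decay-key-2} and the diameter bound on the dyadic cubes $K_j$) already supplies $|\cC(v,\Gm,Q)\cap\rK^-_{r_0/2}|\ge\nu|\rK^-_{r_0}|$, the desired $|\cG^u_M\cap\rK^-_{r_0}|\ge\nu|\rK^-_{r_0}|$ will follow at once.

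For the second alternative, suppose $(x_1,t_1)\in\rK^+_{3r_0}\cap\cG^u_1$, certified by a quadratic $P_1$ of aperture $1$. Set $\tilde u=u-P_1$. Then $\tilde u\ge 0$ on $Q\cap\{t\le t_1\}$ with $\tilde u(x_1,t_1)=0$, and because $P_1$ is a universal smooth quadratic, $\cM^\pm_{\fL}P_1$ and $\partial_t P_1$ are bounded by universal constants. Consequently $\tilde u$ still satisfies $\cM^-_{\fL}\tilde u-\tilde u_t\le\vep_0+C_1$ in the viscosity sense, while $\inf_{\rK^+_{3r_0}}\tilde u\le 0\le 1$. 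The first alternative therefore applies to $\tilde u$ (after shrinking $\vep_0$), giving a measure estimate for $\cG^{\tilde u}_M$; adding $P_1$ back only enlarges apertures by a universal additive constant, yielding $|\cG^u_{M'}\cap\rK^-_{r_0}|\ge\nu|\rK^-_{r_0}|$ for a possibly larger universal $M'$.

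The main obstacle will be the first step: verifying that the contact relation $u\ge\Psi-\Gm$ at $(x_*,t_*)$ converts, via the contact-point regularity of $\Gm$ provided by Corollary \ref{cor-abp-q} and the built-in regularity of $\Psi$, into precisely the tangent-paraboloid form demanded by the definition of $\cG^u_M$. The careful bookkeeping of spatial $\rC^{1,1}$ and temporal Lipschitz estimates is where the local-in-time versus nonlocal-in-space tension in the parabolic setting must be reconciled cleanly; this is the parabolic analogue of the observation made in \cite{CS, KL1, KL2} that contact points in the elliptic ABP argument automatically enjoy $\rC^{1,1}$-tangency. The second alternative, by comparison, is a largely formal reduction once one recognizes that adding a fixed smooth quadratic to a solution preserves the operator class at the cost of universal constants.
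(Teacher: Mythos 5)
The paper offers no proof of this corollary beyond the pointer ``from the same reason as \cite{W1},'' so there is no detailed argument to compare against; judged on its own terms, your first step is the right reconstruction of the intended route. At a contact point $(x_*,t_*)$ of $v=\Psi-u$ with its concave envelope one has $u=\Psi-v\ge \Psi-\Gm$ with equality at $(x_*,t_*)$, the monotonicity and concavity of $\Gm$ give $\Gm(y,s)\le\Gm(x_*,t_*)+p\cdot(y-x_*)$ for $s\le t_*$, and the asserted $\rC^{1,1}_x$-regularity and Lipschitz-in-$t$ lower bound for $\Psi$ turn this into a global tangent paraboloid of universal aperture, so $\cC(v,\Gm,Q)\cap\rK^-_{r_0}\subset\cG^u_M$ and \eqref{eq-decay-key-2} finishes the job. (Corollary \ref{cor-abp-q} is not needed here; concavity plus monotonicity of $\Gm$ suffice. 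The only soft spot is that the semiconvexity of $\Psi$ rests on the paper's unproved assertion that $\min(\max(f_\dt-\eta,0),At)$ is $\rC^{1,1}_x$.)

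The second step contains a genuine gap. You claim that because $P_1$ is a smooth quadratic, $\cM^{\pm}_{\fL}P_1$ is bounded by a universal constant. This is false for the nonlocal operators of this paper: the second-order increment of a global paraboloid of aperture $1$ is $\mu(P_1,x,y,t)=-|y|^2$, so
$\cM^{+}_{\fL}(-P_1)(x,t)=(2-\sm)\Ld\int_{\BR^n}|y|^{2-n-\sm}\,dy=+\iy$
for every $\sm<2$, the divergence coming from the tail $|y|\to\iy$. Concretely, $\tilde u=u-P_1$ grows like $+\tfrac12|y|^2$ at infinity, its positive part dominates the kernel at infinity, and one gets $\cM^-_{\fL}\tilde u=+\iy$, which certainly does not satisfy $\cM^-_{\fL}\tilde u-\tilde u_t\le \vep_0+C_1$. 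In addition, $\tilde u\ge 0$ is known only on $Q\cap\{t\le t_1\}$ (the definition of $\cG^u_1$ gives nothing outside $Q$ or for $t>t_1$, and $P_1$ may well be positive just outside $Q$), so the global nonnegativity hypothesis of Lemma \ref{lem-decay-key} also fails for $\tilde u$. These are precisely the two points at which the local argument of \cite{W1} does not transfer verbatim to the nonlocal setting. A correct treatment must truncate: e.g.\ replace $P_1$ by $P_1\chi_{Q}$ (or $\min(P_1,0)$ outside a fixed ball) before subtracting, and then estimate the error this introduces in $\cM^-_{\fL}$ --- the far-field contribution is controlled by $(2-\sm)\Ld\int_{|y|\ge c}\bigl(u(x+y,t)+|P_1(x+y,t)|\bigr)|y|^{-n-\sm}dy$, which is finite and universal only after one secures a universal bound on the coefficients $c,b$ of $P_1$ (available here because $u\ge 0$ on $\BR^n\times I$ and $P_1\le u$ on $Q\cap\{t\le t_1\}$ force $b$ and $c$ to be controlled). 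None of this bookkeeping is in your proposal, and without it the reduction of the second alternative to the first does not go through.
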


\subsection{ Nonlocal Parabolic Calder\'on-Zygmund decomposition}\label{sec-decay-cz}

The parabolic version of Calder\'on-Zygmund decomposition has been
introduced at \cite{W1}. The main difference between elliptic and
parabolic version lies in the fact that the parabolic version
requires a time interval for some information to propagate along the
time through space-time scale which is invariant under the parabolic
equation, while the elliptic version have no such time delay since
the stay state describes the behavior of solution after infinite
time. To detect the influence of the  time variable, for a given
cube $\rK$, two different associated sets will be introduced: the
time elongation and the expansion of $\rK$ along time.
\begin{definition}
For a cube $\rK=(-r,r)^n\times (0,r^{\sm}]+(x,t)$ in
space-time variable, set $l(\rK)$ to be the length of $\rK$ in the
time variable.

$(i)$ The elongation $\overline{\rK}^m$ of $\rK$ along time in $m$
steps is defined by
                    $$ \overline{\rK}^m=\bigcup_{i=0}^m \biggl((-r,r)^n\times  \Big(\frac{3^{\sigma i}-1}{3^{\sigma }-1}
                     r^{\sigma},\frac{3^{\sigma (i+1)}-1}{3^{\sigma }-1}r^{\sigma}\Big]\biggr)+(x,t)$$

$(ii)$ The expansion $\tilde{\rK}^m$ of $\rK$
 along time in $m$  steps is defined by
                     $$\tilde{\rK}^m=\bigcup_{i=0}^m\biggl((-3^ir,3^ir)^n\times  \Big(\frac{3^{\sigma i}-1}{3^{\sigma }-1}
                     r^{\sigma},\frac{3^{\sigma (i+1)}-1}{3^{\sigma }-1}r^{\sigma}\Big]\biggr)+(x,t).$$
\end{definition}

From the definition we have $\overline{\rK}^m\subset\tilde{\rK}^m$.
We divide $\rK^-_1$ step by step. First notice that there is an
increasing sequence of rational numbers $\frac{p_m}{m}$ such that
$\frac{p_m}{m}\rightarrow\sigma$. Set $N_m=2^{p_m}\times 2^{m}$ for
$0<\sigma\leq 1$,  and $2^{p_m}\times 2^{p_m+m(m-p_m)}$ for
$1<\sigma<2$.  After $m^{th}$ step, we obtain cubes $\rK_r$ of the
form $\rK_r=\rK^-_r(x,t)$. Then we split $\rK_r$ into $N_m$ cubes by
dividing the time interval into $2^{m}$ subintervals for
$0<\sigma\leq 1$ and $ 2^{p_m+m(m-p_m)}$ subsintervals for
$1<\sigma<2,$ and the rectangle in $x$-variable into equal $2^{np_m}
$-subcubes. We do the same splitting step with each one of these
$N_m$ cubes and we continue this process. The cubes obtained in this
way are called {\it dyadic cubes.} If $\rK$ and $\overline {\rK}$
are two dyadic cubes, then we say that $\overline{\rK}$ is the {\it
predecessor} of $\rK$ if $\rK$ is one of $N$ cubes obtained from
splitting $\overline{\rK}$. Now we have a parabolic version of
Calder\'on-Zygmund decomposition.

\begin{lemma}[Lemma 3.23, \cite{W1}]\label{lem-cz}
Let $\cA\subset \rK^-_1=(-1,1)^n\times (0,1]$ be a measurable set.
For $\delta\in(0,1)$, we set
$$\cA^m_{\delta}=\cup\{\overline{\rK}^m:|\rK\cap \cA|\geq \delta|\rK|,\,
\rK\,\text{dyadic cubes}\}\cap\{|x^i|\leq 1\}.$$ Then we have that
$$|\cA_{\delta}^m|\geq \frac{m}{(m+1)\delta}|\cA|.$$
\end{lemma}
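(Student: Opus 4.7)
This is a parabolic Calder\'on--Zygmund covering lemma: ``bad'' dyadic cubes (those with $\cA$-density $\geq\delta$) are stretched forward in time through the elongations $\overline{\rK}^m$ to absorb the time-delay intrinsic to the nonlocal parabolic operator. The plan is to (i) extract a pairwise disjoint family of maximal bad cubes, (ii) prove a multiplicity estimate on their elongations, and (iii) convert the resulting volume bound into the claimed fraction of $|\cA|$ via the density constraint.

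For (i), apply the Lebesgue differentiation theorem along the parabolic dyadic basis: a.e.\ $(x,t)\in\cA$ is a density point, hence lies in some bad dyadic cube, and selecting maximal such cubes yields a pairwise disjoint family $\cF$ with $\cA\subset\bigcup_{\rK\in\cF}\rK$ up to a null set; the density inequality $|\rK\cap\cA|\geq\delta|\rK|$ summed over the disjoint family $\cF$ gives $\sum_{\rK\in\cF}|\rK|\leq|\cA|/\delta$. For (ii), each $\overline{\rK}^m$ is a disjoint union of $m+1$ time-stacked rectangles $\rK^{(0)},\dots,\rK^{(m)}$ of volumes $3^{i\sm}|\rK|$ sharing the spatial base of $\rK$. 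The key combinatorial claim is that every point of $\{|x^i|\leq 1\}$ is contained in at most $m+1$ of the elongations $\{\overline{\rK}^m:\rK\in\cF\}$: any containing $\rK$ has its time-base in one of $m+1$ canonical slabs below the point, and the disjointness of $\cF$, together with the parabolic dyadic geometry calibrated so that $p_m/m\to\sm$, pins down at most one contributing $\rK$ per slab. Step (iii) combines multiplicity $\leq m+1$ with the geometric-series volume identity:
\[
|\cA^m_\delta|\;\geq\;\frac{1}{m+1}\sum_{\rK\in\cF}|\overline{\rK}^m|\;=\;\frac{1}{m+1}\sum_{\rK\in\cF}|\rK|\cdot\frac{3^{(m+1)\sm}-1}{3^\sm-1}.
\]
After discarding the $i=0$ level (already accounted for by the trivial inclusion $\cA\subset\cA^m_\delta$ from Step (i)) and reapplying the multiplicity bound to the $m$ genuinely new levels $i=1,\dots,m$, a careful pigeonhole on the geometric sum converts this into the factor $\frac{m}{m+1}$ times the bound $\sum|\rK|=|\cA|/\delta$ (saturated by critical cubes with equality in the density condition), producing the stated inequality $|\cA^m_\delta|\geq\frac{m}{(m+1)\delta}|\cA|$.

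\textbf{Main obstacle.} The delicate part is the multiplicity bound in step (ii). Elongations of two disjoint cubes in $\cF$ at \emph{different} spatial scales can nest non-trivially --- a small cube's elongation may extend upward into the spatial slab of a larger cube one step above in time --- so the overlap count $m+1$ is not a priori clear. It depends crucially on the specific parabolic dyadic geometry introduced immediately above (the $N_m$-adic subdivision and aspect ratio $p_m/m\to\sm$), designed so that the $m+1$ elongation time-slabs align with dyadic time-scales. Sharpening the constant from a crude overlap bound to exactly $\frac{m}{(m+1)\delta}$ --- which is what makes this a sharp covering tool in later H\"older estimates --- is precisely the technical content of Wang's original argument \cite{W1}.
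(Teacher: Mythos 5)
Your proposal does not prove the lemma; the two load-bearing steps both fail. (a) The multiplicity claim of step (ii) --- that each point of $\{|x^i|\le 1\}$ meets at most $m+1$ of the elongations $\overline{\rK}^m$, $\rK\in\cF$ --- is false even within a single spatial column at a single dyadic scale: the elongation of a cube of time-length $r^{\sm}$ has time-length $\frac{3^{\sm(m+1)}-1}{3^{\sm}-1}\,r^{\sm}$, which exceeds $(m+1)r^{\sm}$ for every $\sm\in(0,2)$, so a point can lie in the elongations of roughly $\frac{3^{\sm(m+1)}-1}{3^{\sm}-1}$ pairwise disjoint same-scale cubes stacked below it in time; and once different scales are mixed (a small cube sitting in time above a larger, disjoint cube whose spatial shadow contains it) the overlap count is not bounded by any function of $m$ alone. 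Disjointness and maximality of $\cF$ do not rule this out, and you yourself defer exactly this point to Wang, so the core of the argument is asserted rather than proved. (b) Step (iii) then runs the key inequality backwards: from $|\cA^m_{\dt}|\ge c\sum_{\rK\in\cF}|\rK|$ you would need a \emph{lower} bound $\sum_{\rK\in\cF}|\rK|\gtrsim|\cA|/\dt$, but the density condition on maximal bad cubes gives only the upper bound $\sum|\rK|\le|\cA|/\dt$ together with the trivial $\sum|\rK|\ge|\cA|$; maximal bad cubes can have $\cA$-density far above $\dt$ (close to $1$ when $\cA$ is itself a union of dyadic cubes), so the ``saturation at equality'' you invoke does not occur and no factor $1/\dt$ can be extracted from this family.

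The genuine mechanism in Calder\'on--Zygmund/stacked covering lemmas of this type is the stopping-time information your plan never uses: for a maximal bad cube the predecessor is \emph{not} bad, i.e.\ has $\cA$-density $<\dt$, and it is by covering $\cA$ with sets of density at most $\dt$ (suitably propagated forward in time, which is where the stacks and the loss $\frac{m}{m+1}$ enter) that one bounds $|\cA|$ by $\dt$ times a volume contained in $\cA^m_{\dt}$. Two further cautions: the paper itself offers no proof to compare against (the lemma is quoted verbatim as Lemma 3.23 of \cite{W1}); and, as quoted, the statement requires an implicit smallness/normalization hypothesis --- for $\cA=\rK^-_1$ every dyadic cube is bad for every $\dt$, so $\cA^m_{\dt}$ is a fixed set of measure at most $2^n\frac{3^{\sm(m+1)}-1}{3^{\sm}-1}$, while $\frac{m}{(m+1)\dt}|\cA|\to\infty$ as $\dt\to 0$ --- which is an independent sign that no soft covering argument can establish the literal statement, and that any correct write-up must track the hypotheses under which Wang's argument operates.
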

We need the following version of Calderon-Zygmund decomposition in
the following form.

\begin{cor}\label{cor-cz}
Let $\fB\subset \rK^-_1=(-1,1)^n\times (0,1]$ be a measurable set.
For $\delta\in(0,1)$ and any dyadic cube $\rK=\rK^-_r(x,t)$, we set
$\overline{\rK}_*^m=(x+(-r,r)^n)\times
(t,t+\f{3^{\sm(m+1)}-1}{3^{\sm}-1}r^\sm]$. Let
$$\fB^m_{\delta}=\cup\{\overline{\rK}_*^m:|\rK\cap \fB|\geq \delta|\rK|,\,
\rK\,\text{dyadic cubes}\}\cap\{|x^i|\leq 1\}.$$ Then we have that
$$|\fB_{\delta}^m|\geq \frac{m}{(m+1)\delta}|\fB|.$$
\end{cor}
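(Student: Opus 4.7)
The plan is to derive Corollary \ref{cor-cz} as a direct reformulation of Lemma \ref{lem-cz}, the point being that the set $\overline{\rK}^m_*$ written out in the corollary is merely an explicit version of the elongation $\overline{\rK}^m$ that appears in the lemma, with the telescoping union collapsed into a single rectangle.

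First I would verify this identification. Fix a dyadic cube $\rK = \rK^-_r(x,t) = (x+(-r,r)^n)\times(t,t+r^\sigma]$, and consider the consecutive time intervals
$$I_i = \left(\frac{3^{\sigma i}-1}{3^\sigma-1}\,r^\sigma,\ \frac{3^{\sigma(i+1)}-1}{3^\sigma-1}\,r^\sigma\right],\qquad i=0,1,\dots,m,$$
that appear in the definition of $\overline{\rK}^m$. Since the right endpoint of $I_i$ equals the left endpoint of $I_{i+1}$, the union $\bigcup_{i=0}^m I_i$ telescopes to the single interval $\left(0,\ \tfrac{3^{\sigma(m+1)}-1}{3^\sigma-1}\,r^\sigma\right]$. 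Because the spatial factor $(-r,r)^n$ appears identically in every summand of the union defining $\overline{\rK}^m$, translating by $(x,t)$ gives
$$\overline{\rK}^m = (x+(-r,r)^n)\times\left(t,\ t+\tfrac{3^{\sigma(m+1)}-1}{3^\sigma-1}\,r^\sigma\right] = \overline{\rK}^m_*.$$

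With this identification, the set $\fB^m_\delta$ appearing in the corollary is precisely the set $\cA^m_\delta$ of Lemma \ref{lem-cz} applied to $\cA := \fB$, and the indexing collection of dyadic cubes is the same on both sides. The conclusion $|\fB^m_\delta|\geq\frac{m}{(m+1)\delta}|\fB|$ is therefore nothing other than the inequality supplied by Lemma \ref{lem-cz}. Since all of the nontrivial content — namely the parabolic stacking/covering argument that produces the constant $\frac{m}{(m+1)\delta}$ — is already contained in Lemma \ref{lem-cz}, I do not foresee any substantive obstacle; the proof of the corollary consists only of the notational matching described above. The advantage of stating it in the $\overline{\rK}^m_*$ form is that later applications (for propagating the density estimates from Lemma \ref{lem-decay-key}) need an explicit single-rectangle description of the forward time elongation, rather than a union of sub-cubes.
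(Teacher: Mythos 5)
Your proposal is correct and is exactly the paper's route: the authors derive Corollary \ref{cor-cz} from Lemma \ref{lem-cz} by the single observation that $\overline{\rK}^m_*=\overline{\rK}^m$ for every dyadic cube $\rK=\rK^-_r(x,t)$, which is precisely the telescoping of the time intervals $\bigl(\tfrac{3^{\sigma i}-1}{3^{\sigma}-1}r^{\sigma},\tfrac{3^{\sigma(i+1)}-1}{3^{\sigma}-1}r^{\sigma}\bigr]$ that you verify. Nothing further is needed.
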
 We note that $\overline{\rK}^m_*=\overline{\rK}^m$ for any
dyadic cube $\rK=\rK^-_r(x,t)$.

\subsection{Decay estimate through  iterations}\label{sec-decay-it}
The following lemma is a consequence of Lemma \ref{lem-decay-key}
and Lemma \ref{lem-cz}.

\begin{lemma}\label{lem-decay} Given $\sm_0\in (0,2)$, let $\,\sm\in
(\sm_0,2)$. Let $\vep_0, r_0$ be the constants in Lemma
\ref{lem-decay-key}. If $u\in\rB(\BR^n\times I)$ is a viscosity
supersolution to $\cM^-_{\fL}u-u_t\le\vep_0$  on $Q$ such that $u\ge
0$ on $\BR^n\times I$ and $\inf_{\rK^+_{3r_0}}u\le 1$, then there
are universal constants $C>0$ and $\vep_*>0$ such that
$$\bigl|\cB^u_s\cap \rK^-_{r_0}\bigr|\le C\,s^{-\vep_*}|\rK^-_{r_0}|,\,\forall\,s>0.$$
\end{lemma}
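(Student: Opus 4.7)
The plan is to iterate the one-step decay from Corollary \ref{cor-decay-key} via the parabolic Calder\'on--Zygmund decomposition (Corollary \ref{cor-cz}) to obtain geometric decay of the discrete level sets $A_k := \cB^u_{M^k} \cap \rK^-_{r_0}$ in $k$, and then convert the resulting estimate into a power-law in $s$. Here $M > 1$ and $\nu \in (0,1)$ are the universal constants supplied by Corollary \ref{cor-decay-key}. The hypothesis $\inf_{\rK^+_{3r_0}} u \leq 1$ combined with that corollary gives the base estimate $|A_1| \leq (1-\nu)|\rK^-_{r_0}|$.

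For the inductive step, apply Corollary \ref{cor-cz} with $\fB := A_{k+1}$, $\delta := 1-\nu$, and a parameter $m$ to be chosen, obtaining $|\fB^m_{1-\nu}| \geq \frac{m}{(m+1)(1-\nu)}|A_{k+1}|$. The crucial claim is the containment $\fB^m_{1-\nu} \subset A_k$. Granting this, we get $|A_{k+1}| \leq \rho |A_k|$ with $\rho := (1-\nu)(1 + 1/m)$, and picking $m > (1-\nu)/\nu$ makes $\rho \in (0,1)$; iteration then yields $|A_k| \leq (1-\nu)\rho^{k-1}|\rK^-_{r_0}|$. The passage to the continuous statement is standard: for $s > 0$, choose $k$ with $M^k \leq s < M^{k+1}$, use monotonicity $\cB^u_s \subset \cB^u_{M^k}$ of the level sets in the aperture parameter, and set $\vep_* := -\log\rho/\log M > 0$; small $s$ is absorbed by enlarging $C$.

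The key containment $\fB^m_{1-\nu} \subset A_k$ is verified as follows. Let $\rK = \rK^-_r(x_0, t_0)$ be a dyadic cube with $|\rK \cap A_{k+1}| \geq (1-\nu)|\rK|$, equivalently $|\rK \cap \cG^u_{M^{k+1}}| \leq \nu |\rK|$. Rescale via $\bar u(y, s) := M^{-k}u(x_0 + ry, t_0 + r^\sigma s)$; parabolic scale-invariance of the class $\fL$ ensures $\bar u$ is still a viscosity supersolution of $\cM^-_\fL \bar u - \bar u_t \leq \vep_0$, and $\cG^{\bar u}_1$ pulls back to $\cG^u_{M^{k+1}}$. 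Applying the contrapositive of Corollary \ref{cor-decay-key} to $\bar u$ forces $\rK^+_{3r}(x_0, t_0) \cap \cG^u_{M^k} = \emptyset$, so $\rK^+_{3r}(x_0, t_0) \subset \cB^u_{M^k}$. The $i = 1$ slice of the elongation $\overline{\rK}^m_*$ sits inside $\rK^+_{3r}(x_0, t_0)$ (same spatial footprint $(-r,r)^n \subset (-3r,3r)^n$, and time window $(t_0 + r^\sigma, t_0 + (1+3^\sigma)r^\sigma] \subset (t_0 + r^\sigma, t_0 + (2+3^\sigma)r^\sigma]$), and the later slices $i = 2, \ldots, m$ are handled by repeating the contrapositive at successively larger dyadic scales $3^{i-1}r$, using the nesting $\cG^u_{M^{k+1}} \subset \cG^u_{M^k}$ to propagate the emptiness of the good set forward along the tube.

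The main obstacle will be precisely this chaining step: one must match the elongation $\overline{\rK}^m_*$, a thin tube in time of fixed spatial width $2r$, against a sequence of forward cubes $\rK^+_{3r_i}$ of progressively larger radii $r_i = 3^{i-1}r$, checking at each scale that the density hypothesis of Corollary \ref{cor-decay-key} is preserved so that its contrapositive can be reapplied. This is the parabolic analog of the expansion-of-cubes argument in the elliptic theory \cite{CS, KL1} and leans on the parabolic CZ machinery of \cite{W1}; once it is in place, the remainder of the argument is the routine geometric iteration outlined above.
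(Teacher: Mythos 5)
Your overall architecture (one-step lemma plus parabolic Calder\'on--Zygmund plus geometric iteration plus conversion to a power law) is the same as the paper's, but the inductive claim you iterate is too strong, and the chaining you sketch for it does not close. You set $A_k=\cB^u_{M^k}\cap\rK^-_{r_0}$ and claim that if a dyadic cube $\rK=\rK^-_r(x_0,t_0)$ satisfies $|\rK\cap\cG^u_{M^{k+1}}|\le\nu|\rK|$, then the whole elongation $\overline{\rK}^m_*$ lies in $\cB^u_{M^k}$. The contrapositive of Corollary \ref{cor-decay-key}, applied at scale $r$ with normalization $M^{-k}$, buys exactly one time step: $\rK^+_{3r}(x_0,t_0)\cap\cG^u_{M^k}=\emptyset$, which covers only the first slice of the tube. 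To treat the second slice you must apply the corollary to a base cube of size about $3r$ sitting over the first slice; the only density information available there is the emptiness of $\cG^u_{M^k}$ you just produced, so the normalization is forced to be $M^{-(k-1)}$ and the conclusion is that the second slice avoids $\cG^u_{M^{k-1}}$, not $\cG^u_{M^k}$. Each forward step in time costs a full factor of $M$ in aperture (this is precisely the time-delay effect the paper is organized around), so after $j$ slices you only get containment in $\cB^u_{M^{k-j+1}}$, and the claimed containment $\fB^m_{1-\nu}\subset A_k$ does not follow; indeed it cannot be derived from the key lemma, whose backward form only says that a good point of aperture $h$ in slice $j$ forces $|\rK\cap\cG^u_{M^{j}h}|\ge\nu|\rK|$, which for $j\ge 2$ is strictly weaker than what your claim needs. (Your auxiliary nesting $\cG^u_{M^{k+1}}\subset\cG^u_{M^k}$ is also backwards: since a larger aperture is a weaker touching condition, $\cG^u_h$ increases in $h$; that is exactly what your own use of $\cB^u_s\subset\cB^u_{M^k}$ for $s\ge M^k$ requires.)

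The paper repairs exactly this by building the loss into the bookkeeping: it iterates along the subsequence of apertures $M^{\beta k}$ with $\beta=m+1$, proving $|\cB^u_{M^{\beta k}}\cap\rK^-_{r_0}|\le(1-\nu/2)^k|\rK^-_{r_0}|$, and in the inductive step it argues by contradiction in the opposite direction from yours: if some point of $\overline{\rK}^m_*$, say in slice $j\le m$, belonged to $\cG^u_{M^{\beta k}}$, then applying Lemma \ref{lem-decay-key} (via Corollary \ref{cor-decay-key}) backward in time $j$ times, rescaled to the expanding regions $\tilde{\rK}^{j-i}\s\tilde{\rK}^{j-i-1}$ and with the aperture multiplied by $M$ at each step, gives $|\cG^u_{M^{j}M^{\beta k}}\cap\rK|\ge\nu|\rK|$; since $M^{j}M^{\beta k}\le M^{\beta(k+1)}$ for $\beta=m+1$, this contradicts the density hypothesis $|\rK\cap\fB|\ge(1-\nu)|\rK|$. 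The final conversion to $|\cB^u_s\cap\rK^-_{r_0}|\le C\,s^{-\vep_*}|\rK^-_{r_0}|$ then goes through as you describe, only with $\vep_*$ determined by $1-\nu/2=(M^{\beta})^{-\vep_*}$ rather than by $M$ alone. So the fix is to replace $A_k=\cB^u_{M^k}$ by $\cB^u_{M^{(m+1)k}}$ and to run the chain backward from the hypothetical good point in the elongation rather than forward from the density hypothesis; as written, your inductive step has a genuine gap.
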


\pf (i) First, we shall prove that there is $\beta\ge 1$ such that
 \begin{equation}\label{eq-decay-1}
 \bigl|\cB^u_{M^{\beta k}}\cap
\rK^-_{r_0}\bigr|\le(1-\nu/2)^k|\rK^-_{r_0}|,\,\forall\,k\in\BN,\end{equation}
where $\nu>0$ is the constant as in Lemma \ref{lem-decay-key}. We
will prove it by induction. The case $k=1$ has been proved at Lemma
\ref{lem-decay-key} for $\beta\geq 1$. Assume that the result
\eqref{eq-decay-1} holds for $k$ ($k\ge 1$). Set  $\fB=\cB^u_{
M^{\beta (k+1)}}\cap \rK^-_{r_0}$. We choose a large $m$ such that
$1>1-\nu/2>\frac{(m+1)(1-\nu)}{m}>0$ and consider $\fB^m_{\delta}$
at Corollary \ref{cor-cz} with $\delta=1-\nu$. We claim that
\begin{equation}\label{eq-decay-2}
\fB^m_{\delta}\subset \cB^u_{M^{\beta k}}\cap \rK^-_{r_0}
\end{equation}
 for a uniform constant $\beta\ge 1$ independent of $k$.
 If the claim is true and if $|\fB|>\left(1-\nu/2\right)^{k+1}|\rK^-_{r_0}|$ ,
 then we have that
 \begin{equation*}
 |\cB^u_{M^{\beta k}}\cap \rK^-_{r_0}|\geq |\fB^m_{\delta}|\geq  \frac{m}{(m+1)\delta}|\fB|
>\frac{\left(1-\nu/2\right)^{k+1}}{1-\nu/2}|\rK^-_{r_0}|=(1-\nu/2)^k|\rK^-_{r_0}|,
 \end{equation*}
which is a contradiction against the assumption. Therefore we have
that $|\cB^u_{ M^{\beta (k+1)}}\cap \rK^-_{r_0}|=|\fB|\leq
\left(1-\nu\right)^{k+1}|\rK^-_{r_0}|.$

(ii)  Now we are going to show the claim. If the claim is not rue,
there is $(x_0,t_0)\in  \fB^m_{\delta}\backslash\Big(\cB^u_{M^{\beta
k}}\cap \rK^-_{r_0}\Big) ,$ which implies that there is a dyadic
cube $\rK=\rK^-_r$ such that $(x_0,t_0)\in{\overline \rK}^m_*$ and
\begin{equation}\label{eq-decay-3}
|\fB\cap \rK|\geq \delta |\rK|
\end{equation}
and $(x_0,t_0)\in \cG^u_{M^{\beta k}}\cap
\Big(\overline{\rK}^j_*\s\overline{\rK}_*^{j-1}\Big)=\cG^u_{M^{\beta
k}}\cap \Big(\overline{\rK}^j_*\s\tilde{\rK}^{j-1}\Big)$ for some
$1\leq j\leq m$. By the definition of an expansion  $\tilde{\rK}^j$
of $\rK$ along the time, we have $(x_0,t_0)\in
\overline{\rK}^j_*\subset\tilde{\rK}^j$.

(iii) Now we are going to apply Lemma \ref{lem-decay-key} backward
from $j$ to $1$ after scaling to show that
\begin{equation}\label{eq-decay-4}
|\cG^u_{M^iM^{\beta k}}\cap \tilde{\rK}^{j-i}|\geq \nu
3^{(j-i)(n+\sm)}|\rK|
\end{equation}
for $i=1,\cdots,j$. If \eqref{eq-decay-4} is true, then since
$\tilde{\rK}^0=\rK$ we have
 \begin{equation*}
|\rK\s\fB |=|\cG^u_{M^{\beta (k+1)}}\cap \rK|>|\cG^u_{M^jM^{\beta
k}}\cap \tilde{\rK}^0|\geq \nu |\rK|
\end{equation*}
for $\beta=m+1$ independent of $k$. Now  we have a contradiction
against \eqref{eq-decay-3} since  \begin{equation*} |\rK\cap
\fB|<(1- \nu) |\rK|=\delta |\rK|
\end{equation*}
and then the claim \eqref{eq-decay-1}.

(iv) Now we are going to show \eqref{eq-decay-4}. Assume that
\begin{equation}\begin{split}\label{eq-decay-5}
&\cG^u_{M^{i-1}M^{\beta k}}\cap \left(\tilde{\rK}^{j-i+1}\backslash
\tilde{\rK}^{j-i}\right)\\&\qquad=\cG^u_{M^{i-1}M^{\beta k}}\cap
\left[\bigl(\tilde{\rK}^{j-i+1}\s\tilde{\rK}^{j-i-1}\bigr)\s\bigl(\tilde{\rK}^{j-i}\s\tilde{\rK}^{j-i-1}\bigr)\right]\neq\emptyset,
\end{split}\end{equation}
which is true for $i=1$. Then there is some $
(y_0,s_0)\in\tilde{\rK}^{j-i+1}\s\tilde{\rK}^{j-i-1}$ such that
 $\tilde{\rK}^{j-i}\s\tilde{\rK}^{j-i-1}=\rK^-_{3^{j-i}r}(y_0,s_0)$.
Now we consider the transformation
$$x=y_0+(3^{j-i}r)z,\,\,t=s_0+(3^{j-i}r)^{\sm}\tau$$ and the function
$v(z,\tau)=u(x,t)/M^{\beta k+i-1}$ for $(z,\tau)\in \rK_1^-$. Notice
that $\tilde{\rK}^{j-i+1}\backslash \tilde{\rK}^{j-i-1}$ and $
\tilde{\rK}^{j-i}\backslash \tilde{\rK}^{j-i-1}$ will be transformed
to $\rK^+_{3r}$ and $\rK^-_r$ under the transformation.
To complete the proof, it now remains to show that $v$ satisfies the
hypothesis of Corollary \ref{cor-decay-key}. We now take any
$\vp\in\rC^2_{Q}(v;z,\tau)^-$. If we set
$\psi(z,\tau)=\vp(x,t)/M^{\beta k+i-1}$, then we observe that
$$\vp\in\rC^2_{Q}(v;z,\tau)^-\,\,\Leftrightarrow\,\,\psi\in
\rC^2_{Q_{3^{j-i}r}(y_0,s_0)}(u;x,t)^-.$$ Now we have that
\begin{equation*}\begin{split}&\cM^-_{\fL}v(z,\tau)-\vp_\tau(z,\tau)\le\cL
v(z,\tau)-\vp_\tau(z,\tau)\\
&\qquad=\frac{1}{ (3^{j-i}r )^{\sigma}M^{\beta k+i-1}}\left(\int_{\BR^n}\mu (u,x,t,y)K(y)\,dy-\psi_t(x,t)\right)\\
&\qquad=\frac{1}{ (3^{j-i}r )^{\sigma}M^{\beta k+i-1}}\left(\cL
u(x,t)-\psi_t(x,t)\right)
\end{split}\end{equation*} for any $\cL\in\fL$. Since
$Q_{3^{j-i}r}(y_0,s_0)\subset Q$, this implies that
$$\cM^-_{\fL}v(z,\tau)-\vp_\tau(z,\tau)\le\frac{1}{ (3^{j-i}r
)^{\sigma}M^{\beta
k+i-1}}\bigl(\cM^-_{\fL}u(x,t)-\psi_t(x,t)\bigr)\le\vep_0.$$ Also it
is obvious that $v\ge 0$ on $\BR^n$, and thus we see from
\eqref{eq-decay-5} that  $\rK^+_{3r}\cap \cG^v_1\neq \emptyset$. By
Corollary \ref{cor-decay-key}, we obtain that
$|\cG^v_M\cap\rK_r^-|\ge\nu|\rK_r^-|$. We note that
$$|\cG^v_M\cap\rK_r^-|\ge\nu|\rK_r^-|\,\,\Leftrightarrow\,\,\bigl|\cG^u_{M^i
M^{\bt
k}}\cap(\tilde{\rK}^{j-i}\s\tilde{\rK}^{j-i-1})\bigr|\ge\nu\bigl|\tilde{\rK}^{j-i}\s\tilde{\rK}^{j-i-1}\bigr|.$$
Since
$\tilde{\rK}^{j-i}\s\tilde{\rK}^{j-i-1}=\rK^-_{3^{j-i}r}(y_0,s_0)$,
this implies that $$\bigl|\cG^u_{M^i M^{\bt
k}}\cap\tilde{\rK}^{j-i}\bigr|\ge\nu\bigl|\rK^-_{3^{j-i}r}(y_0,s_0)\bigr|=\nu
3^{(j-i)(n+\sm)}|\rK|.$$

(v)  Finally the result follows immediately from \eqref{eq-decay-1}
by taking $C=(1-\nu)^{-1}$ and $\vep_*>0$ so that $1-\nu=(M^{\beta
})^{-\vep_*}$. Hence we complete the proof. \qed

By a standard covering argument we obtain the following theorem.

\begin{thm}\label{thm-decay} For any $\sm_0\in (0,2)$, let
$\sm\in(\sm_0,2)$ be given. If $u\in\rB(\BR^n\times I)$ is a
viscosity supersolution to $\cM^-_{\fL} u-u_t\le \vep_0$ with
$\,\sm\in (\sm_0,2)$ on $\rQ_2$ such that $u\ge 0$ on $\BR^n\times
I$ and $u(0,0)\le 1$ where $\vep_0$ is the constant given in Lemma
\ref{lem-decay-key}, then there are universal constants $C>0$ and
$\vep_*>0$ such that
$$\bigl|\{u>s\}\cap\rQ_1\bigr|\le C\,s^{-\vep_*} |\rQ_1|,\,\forall\,s>0.$$\end{thm}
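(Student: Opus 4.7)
The plan is to pass from the local decay on a single small past cube (Lemma~\ref{lem-decay}) to a global decay on $\rQ_1$ via a finite covering argument that uses the parabolic translation and scaling invariance of the operator $\cM^-_{\fL}-\partial_t$ together with the seed $u(0,0)\le 1$.

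First I would record a shift-and-scale form of Lemma~\ref{lem-decay}: for any $(x_0,t_0)\in\BR^n\times I$ and $\rho>0$ with $\rK^-_{\rho r_0}(x_0,t_0)\cup\rK^+_{3\rho r_0}(x_0,t_0)\subset\rQ_2$, the rescaled function $v(y,\tau):=u(x_0+\rho y,\,t_0+\rho^{\sigma}\tau)$ is a non-negative viscosity supersolution of $\cM^-_{\fL}v-v_{\tau}\le\vep_0$ on $Q=B_1\times(0,1]$. Whenever one can check $\inf_{\rK^+_{3\rho r_0}(x_0,t_0)}u\le 1$, Lemma~\ref{lem-decay} applied to $v$, combined with the standard inclusion $\{u>s\}\subset\cB^u_{cs}$ for a universal $c>0$, yields
$$\bigl|\{u>s\}\cap\rK^-_{\rho r_0}(x_0,t_0)\bigr|\le Cs^{-\vep_*}\bigl|\rK^-_{\rho r_0}(x_0,t_0)\bigr|,\qquad s>0.$$

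Next I would choose a finite family $\{\rK^-_{\rho_j r_0}(x_j,t_j)\}_{j=1}^{N}$ of rescaled past cubes whose union covers $\overline{\rQ_1}$ and which, together with their future cubes $\rK^+_{3\rho_j r_0}(x_j,t_j)$, are contained in $\rQ_2$. For each $j$ the hypothesis $\inf_{\rK^+_{3\rho_j r_0}(x_j,t_j)}u\le h_j$, with $h_j$ universally controlled, must be verified. This is where the single seed $u(0,0)\le 1$ enters: starting from $(0,0)$ I would iterate Lemma~\ref{lem-decay-key}, each step converting a point with $u$-value $\le M^{k}$ into a positive-measure set of points with $u$-value $\le M^{k+1}$ inside an adjacent parabolic cube; combined with the parabolic Calder\'on--Zygmund decomposition of Corollary~\ref{cor-cz} to tile $\rQ_2$ with dyadic cubes, the iteration terminates in universally many steps. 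Summing the local estimates then gives
$$|\{u>s\}\cap\rQ_1|\le\sum_{j=1}^{N}\bigl|\{u>s\}\cap\rK^-_{\rho_j r_0}(x_j,t_j)\bigr|\le Cs^{-\vep_*}|\rQ_1|$$
after absorbing the universal $N$ and the heights $h_j$ into $C$.

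The main obstacle will be the geometric and temporal construction of the covering together with its chain. In the parabolic setting, the future cube of Lemma~\ref{lem-decay} lies strictly later in time than the past cube of its conclusion, so $u(0,0)\le 1$ directly witnesses the hypothesis only at cubes whose future cube contains $(0,0)$, which essentially fails for every cube lying inside $\rQ_1$. Propagating the smallness forward through a chain of rescaled applications of Lemma~\ref{lem-decay-key}, while keeping both the number of chain steps and the final heights $h_j$ universally bounded independently of $\sm\in(\sm_0,2)$, is the crux; it is precisely where the $\sm$-independent constants in Lemma~\ref{lem-decay-key} and the tight quantitative form of Corollary~\ref{cor-cz} must be used in order to obtain a constant $C$ and exponent $\vep_*$ that do not degenerate as $\sm\to 2$.
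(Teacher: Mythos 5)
Your overall plan --- rescale Lemma \ref{lem-decay}, cover $\rQ_1$ by finitely many small past cubes, and feed the hypothesis $\inf_{\rK^+_{3\rho_j r_0}(x_j,t_j)}u\le h_j$ at each cube by a chain of applications of Lemma \ref{lem-decay-key} starting from the seed --- is exactly what the paper's one-sentence ``standard covering argument'' must mean, and you have correctly isolated the crux. But your resolution of the crux fails: you propose to propagate the smallness \emph{forward} in time from $(0,0)$, whereas Lemma \ref{lem-decay-key} and Corollary \ref{cor-decay-key} only propagate \emph{backward}. Smallness of $u$ somewhere on the \emph{later} cube $\rK^+_{3r}$ yields a positive-measure set of controlled points on the \emph{earlier} cube $\rK^-_r$; this direction is forced by the construction (the barrier $\psi$ is a forward heat-kernel-type subsolution sourced in $\rK^-_{r/2}$ and large on the later set $\rK^+_{3r}$, and the A-B-P estimate localizes the contact set of $\Psi-u$ to where the source sits). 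Starting from a seed at the initial time $t=0$, every application of the lemma produces information at times $\le 0$ and never reaches the positive times filling $\rQ_1=B_1\times(0,1]$. Indeed no argument can, if the statement is read literally: $u(x,t)=N\max(t,0)$ is a nonnegative classical (hence viscosity) supersolution of $\cM^-_{\fL}u-u_t\le\vep_0$ on $\rQ_2$ with $u(0,0)=0\le 1$, yet $\bigl|\{u>s\}\cap\rQ_1\bigr|=(1-s/N)\,|\rQ_1|\to|\rQ_1|$ as $N\to\iy$ for each fixed $s$.

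The intended reading --- the one the paper's own applications require, e.g.\ in the proof of Theorem \ref{thm-harnack-bd} the seed $(\check x,\check t)$ sits at the \emph{top} of the cube $Q_{\dt r}(\check x,\check t)$ on which the level sets are estimated, consistent with the corresponding statement in \cite{W1} --- is that the decay region lies in the \emph{past} of the seed point. Once the time direction is fixed, your chain should run backward from the seed: one application of Corollary \ref{cor-decay-key} at scale $\rho$ centered at $(x_j,t_j)$ converts a single point of $\cG^u_{M^{k}}$ lying in $\rK^+_{3\rho r_0}(x_j,t_j)$ into $|\cG^u_{M^{k+1}}\cap\rK^-_{\rho r_0}(x_j,t_j)|\ge\nu|\rK^-_{\rho r_0}|$, and since the hypothesis cube is three times wider than the conclusion cube the chain can move sideways as well as backward, so a fixed finite cover of $\rQ_1$ is reached in a number of steps depending only on $n$ and $r_0$ (hence uniform in $\sm\in(\sm_0,2)$, because $r_0,M,\nu$ are); the accumulated powers of $M$ are absorbed into $C$ and $\vep_*$ exactly as in step (v) of the proof of Lemma \ref{lem-decay}. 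Two smaller points: the Calder\'on--Zygmund decomposition of Corollary \ref{cor-cz} is internal to Lemma \ref{lem-decay} and plays no role in the covering itself, and the inclusion $\{u>s\}\subset\cB^u_{cs}$ that you invoke to pass from $\cB^u_s$ to the level set $\{u>s\}$ needs a line of justification, since Lemma \ref{lem-decay} is stated for the former while the theorem is stated for the latter.
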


\begin{thm}\label{thm-harnack-weak} Given $\sm_0\in (0,2)$, let
$\sm\in(\sm_0,2)$, and let $(x_0,t_0)\in\BR^n\times I$ and $r\in
(0,2]$. If $\,u\in\rB(\BR^n\times I)$ is a viscosity supersolution
to $\cM^-_{\fL} u-u_t\le c_0$ on $\rQ_{2r}(x_0,t_0)$ such that $u\ge
0$ on $\BR^n\times I$, then there are universal constants $\vep_*>0$
and $C>0$ such that
$$\bigl|\{u>s\}\cap\rQ_r(x_0,t_0)\bigr|\le C\,r^{n+\sm}\bigl(u(x_0,t_0)+c_0\,r^{\sm}\bigr)^{\vep_*}
s^{-\vep_*},\,\forall\,s>0.$$\end{thm}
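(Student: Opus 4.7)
\medskip

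The plan is to derive this as a direct scaling corollary of Theorem \ref{thm-decay}. The only subtleties are choosing the normalizing constant so that both the upper bound $\tilde u(0,0)\le 1$ and the right-hand side bound $\mathcal M^-_{\fL}\tilde u-\tilde u_t\le\vep_0$ hold simultaneously after rescaling, and verifying that $\mathcal M^-_{\fL}$ is homogeneous of order $\sigma$ under the parabolic scaling $(x,t)\mapsto(x_0+rx,t_0+r^\sigma t)$.

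First I would set
$$
\alpha=\frac{\vep_0}{\vep_0\,u(x_0,t_0)+c_0 r^\sigma},\qquad \tilde u(x,t)=\alpha\,u(x_0+rx,\,t_0+r^\sigma t).
$$
Then $\tilde u\ge 0$ on $\BR^n\times I$ since $u\ge 0$, and $\tilde u(0,0)=\alpha\,u(x_0,t_0)\le 1$ by the choice of $\alpha$. The map $(x,t)\mapsto(x_0+rx,t_0+r^\sigma t)$ sends $\rQ_2$ precisely to $\rQ_{2r}(x_0,t_0)$, which is the region where $u$ is assumed to be a viscosity supersolution of $\mathcal M^-_{\fL} u-u_t\le c_0$.

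Next I would record the scaling identity
$$
\mathcal M^-_{\fL}\tilde u(x,t)-\tilde u_t(x,t)\;=\;\alpha\,r^\sigma\,\bigl[\mathcal M^-_{\fL} u-u_t\bigr](x_0+rx,\,t_0+r^\sigma t).
$$
This follows because the kernel bounds \eqref{eq-kernel-bound} are invariant under the substitution $y\mapsto ry$ (the $r^\sigma$-factors coming from $|y|^{n+\sigma}$ and the Jacobian $r^n$ combine correctly), so the class $\fL$ is preserved. Pulling the viscosity inequality back through the change of variables by transporting smooth test functions $\varphi\in\rC^2_{Q_r(x,t)}(u;\cdot)^-$ to the rescaled test functions is routine. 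Combined with $\alpha r^\sigma c_0\le\vep_0$, this gives $\mathcal M^-_{\fL}\tilde u-\tilde u_t\le \vep_0$ on $\rQ_2$ in the viscosity sense.

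Theorem \ref{thm-decay} now applies to $\tilde u$, yielding
$$
\bigl|\{\tilde u>\tau\}\cap\rQ_1\bigr|\le C\,\tau^{-\vep_*}|\rQ_1|,\qquad\tau>0.
$$
Undoing the rescaling introduces a Jacobian $r^{n+\sigma}$ in Lebesgue measure, and substituting $\tau=\alpha s$ gives
$$
\bigl|\{u>s\}\cap\rQ_r(x_0,t_0)\bigr|\le C\,r^{n+\sigma}\alpha^{-\vep_*}\,s^{-\vep_*}\le C'\,r^{n+\sigma}\bigl(u(x_0,t_0)+c_0 r^\sigma\bigr)^{\vep_*}s^{-\vep_*},
$$
with the new constant $C'$ absorbing $\vep_0^{-\vep_*}$ and remaining universal. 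The potential obstacle is ensuring that all constants, including $\vep_*$ and $C$, remain uniform as $\sigma\to 2^-$; but Theorem \ref{thm-decay} already supplies uniformity for $\sigma\in(\sigma_0,2)$, so this reduces to bookkeeping rather than a genuine difficulty.
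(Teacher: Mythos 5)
Your proposal is correct and follows essentially the same route as the paper: the paper likewise rescales via $v(z,\tau)=u(rz+x_0,r^{\sm}\tau+t_0)/q$ with $q=u(x_0,t_0)+c_0r^{\sm}/\vep_0$ (your $\alpha=1/q$), transfers the viscosity inequality through test functions to get $\cM^-_{\fL}v-v_\tau\le\vep_0$ on $\rQ_2$, and then applies Theorem \ref{thm-decay} and undoes the scaling. The only cosmetic difference is that you make the Jacobian/level-substitution bookkeeping explicit, which the paper leaves implicit.
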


\pf Let $(x_0,t_0)\in\BR^n\times I$ and set
$v(z,\tau)=u(rz+x_0,r^{\sm}\tau+t_0)/q$ for $(z,\tau)\in\rQ_{2}$
where $q=u(x_0,t_0)+c_0 r^{\sm}/\vep_0.$ Take any $\vp\in
\rC^2_{\rQ_{2}}(v;z,\tau)^-$. If we set
$\psi=q\,\vp(\f{\,\cdot\,-x_0}{r},\f{\,\cdot\,-t_0}{r^{\sm}})$, then
we see that
$\psi\in\rC^2_{\rQ_{2r}(x_0,t_0)}(u;rz+x_0,r^{\sm}\tau+t_0)$. Thus
by the change of variables $x=rz+x_0$ and $r^{\sm}\tau+t_0$, we have
that
\begin{equation*}\begin{split}&\cM^-_{\fL}v(z,\tau)-\vp_{\tau}(z,\tau)\le\cL
v(z,\tau)-\vp_{\tau}(z,\tau)\\
&\quad=\f{r^{\sm}}{q}\biggl(\int_{\BR^n}\mu(u,rz+x_0,y,r^{\sm}\tau+t_0)\,K(y)\,dy-\psi_t(rz+x_0,r^{\sm}\tau+t_0)\biggr)\\
&\quad:=\f{r^{\sm}}{q}\bigl(\cL
u(x,t)-\psi_t(x,t)\bigr)\end{split}\end{equation*} for any
$\cL\in\fL_0$. Taking the infimum of the right-hand side in the
above inequality, we get that
\begin{equation*}\cM^-_{\fL}v(z,\tau)-\vp_{\tau}(z,\tau)\le\f{r^{\sm}}{q}\biggl(\cM^-_{\fL}
u(x,t)-\psi_t(x,t)\biggr)\le \vep_0.\end{equation*} Thus we have
that $\cM^-_{\fL}v-\vp_{\tau}\le\vep_0$ on $\rQ_{2}$. Applying
Theorem \ref{thm-decay} to the function $v$, we complete the proof.
\qed

\section{Regularity Theory}
\label{sec-regularity}

\subsection{ H\"older estimates}\label{sec-regularity-harnack} In this subsection, we
obtain H\"older regularity result. The following technical lemma is
very useful in proving it. As in \cite{CS,KL1}, its proof can be
derived from Theorem \ref{thm-harnack-weak}.

In this subsection, we are going to take a notation
$\bQ_r\fd\rQ_r\cup Q_r=B_r\times(-r^{\sm},r^{\sm}]$ and
$\bQ_r(x_0,t_0)=\bQ_r+(x_0,t_0)$ for any $r>0$ and
$(x_0,t_0)\in\BR^n\times I$.

\begin{lemma}\label{lem-holder} For $\sm_0\in (0,2)$, let $\sm\in
(\sm_0,2)$ be given. If $u$ is a bounded function with $|u|\le 1/2$
on $\BR^n\times I$ such that
$$\cM_{\fL}^- u-u_t\le\vep_0\,\,\,\text{ and }\,\,\,\cM_{\fL}^+ u-u_t\ge-\vep_0\,\,\text{ on $\bQ_2$ }$$
in the viscosity sense
where $\vep_0>0$ is some sufficiently small constant, then there is
some universal constant $\ap>0$ $($depending only on $\ld,\Ld,n$ and
$\sm_0$$)$ such that $u\in\rC^{\ap}$ at the origin. More precisely,
$$|u(x,t)-u(0,0)|\le C\,(|x|^{\sm}+|t|)^{\ap/\sm}$$ for some universal
constant $C>0$ depending only on $\ap$.\end{lemma}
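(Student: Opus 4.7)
The plan is to prove H\"older regularity at the origin by a standard oscillation decay argument driven by the weak Harnack inequality (Theorem \ref{thm-harnack-weak}). Fix a universal ratio $r\in(0,1)$ (to be chosen small) and construct inductively monotone sequences $\{m_k\}$, $\{M_k\}$ with
\begin{equation*}
m_k\le u\le M_k\text{ on }\bQ_{r^k},\quad M_k-m_k=\rho^k,
\end{equation*}
for some universal $\rho\in(0,1)$. Setting $\ap=\log\rho/\log r$ then yields $|u(x,t)-u(0,0)|\le C(|x|^{\sm}+|t|)^{\ap/\sm}$, which is the claim. The base case $k=0$ is $m_0=-1/2,\ M_0=1/2$ from $|u|\le 1/2$.

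For the inductive step, I would rescale to $\bQ_1$ via
\begin{equation*}
v(x,t)=\f{2}{\rho^k}\Bigl(u(r^kx,r^{k\sm}t)-\f{M_k+m_k}{2}\Bigr),
\end{equation*}
so that $|v|\le 1$ on $\bQ_1$, while the induction hypothesis applied at each previous scale $j\in\{0,\dots,k\}$ gives $|v(x,t)|\le C\rho^{-j}$ on $\bQ_{r^{-j}}$, i.e.\ the polynomial growth $|v(x,t)|\le C(1+(|x|^{\sm}+|t|)^{\ap/\sm})$ on $\BR^n\times I$. Parabolic scaling, together with the invariance of $\fL$ under the scaling $y\mapsto r^k y$, shows
\begin{equation*}
\cM^-_{\fL}v-v_t\le (r^{\sm}/\rho)^k\vep_0\le\vep_0,\quad \cM^+_{\fL}v-v_t\ge-\vep_0\quad\text{on }\bQ_1,
\end{equation*}
as long as $\rho\ge r^{\sm}$, equivalently $\ap\le\sm$. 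Applying a measure dichotomy, WLOG $|\{v\ge 0\}\cap\bQ_{r/2}|\ge\f12|\bQ_{r/2}|$ (the opposite case being symmetric under $v\mapsto -v$). Define the globally nonnegative auxiliary function $w=\max(1+v,0)$, which equals $1+v$ on $\bQ_1$ and satisfies $|\{w\ge 1\}\cap\bQ_{r/2}|\ge\f12|\bQ_{r/2}|$. Comparing operators gives $\cM^-_{\fL}w-w_t\le\vep_0+C_{\ap}$ on $\bQ_{1/2}$, where $C_{\ap}$ accounts for the truncation error supported in $\{v<-1\}\subset\bQ_1^c$. Applying Theorem \ref{thm-harnack-weak} to $w$ at centers whose past cylinders $\rQ_{r'}$ lie in $\bQ_{1/2}$ and see the mass of $\{w\ge 1\}$ yields a pointwise lower bound $w\ge c_*>0$ on a sub-cylinder $\bQ_{r'}$, with universal $c_*,r'$. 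Translating back gives $m_{k+1}=m_k+\f{c_*}{2}(M_k-m_k)$, $M_{k+1}=M_k$, so $M_{k+1}-m_{k+1}=(1-c_*/2)\rho^k$; choosing $\rho=1-c_*/2$ and $r\le r'$ closes the induction.

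The main obstacle will be the uniform control of the exterior correction $C_{\ap}$, which arises because $w$ differs from $1+v$ on $\{v<-1\}\subset\bQ_1^c$. Using the polynomial growth of $v$ one gets
\begin{equation*}
C_{\ap}\ls(2-\sm)\Ld\int_{|y|\ge 1}\f{|y|^{\ap}+1}{|y|^{n+\sm}}\,dy\ls\f{2-\sm}{\sm-\ap},
\end{equation*}
which is uniformly bounded for $\sm\in(\sm_0,2)$ only if $\ap$ is kept strictly below $\sm$ for every admissible $\sm$. This forces the choice $\ap<\sm_0$, after which $\rho$, $r$, $c_*$, and the final constant $C$ depend only on $n,\ld,\Ld,\sm_0$, preserving uniformity as $\sm\to 2$. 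A secondary technical point is that Theorem \ref{thm-harnack-weak} is stated for past cylinders $\rQ_r(x_0,t_0)$, whereas oscillations are measured on the two-sided $\bQ_r$; the weak Harnack must therefore be invoked at a center $(x_0,t_0)$ whose past cone both lies in the supersolution domain $\bQ_{1/2}$ and captures the mass of $\{w\ge 1\}$, an adjustment analogous to the one carried out in the elliptic analogues \cite{CS,KL1}.
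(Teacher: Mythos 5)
Your proposal follows essentially the same route as the paper's proof: a diminishing-oscillation induction on space--time cylinders, rescaling to unit scale, truncating the negative part of the rescaled function (which is supported outside the unit cylinder thanks to the inductive growth bound), estimating the resulting nonlocal error by a tail integral that forces $\ap<\sm_0$, and invoking the weak Harnack inequality (Corollary \ref{cor-harnack-weak}) together with the measure dichotomy to raise the lower bound and close the induction. The one point you should tighten is the treatment of the exterior correction $C_{\ap}$: uniform boundedness alone does not suffice, because the weak Harnack returns $w(x_0,t_0)\ge (c')^{1/\vep_*}-(\vep_0+C_{\ap})(r')^{\sm}$ and this must be strictly positive with a universal margin; you must use the freedom in the scale ratio (your small $r$, the paper's large $N$ in \eqref{eq-5.2.1}, which makes the tail contribution itself $\le\vep_0/2$) to beat $C_{\ap}$, and then verify a posteriori that the resulting exponent $\ap=\log\rho/\log r$ is still below $\sm_0$, which holds since $\ap\to 0$ as $r\to 0$.
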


Lemma \ref{lem-holder} and a simple rescaling argument give the
following theorem 4.19  in \cite {W1} and theorems  as in \cite{CS,
KL1, KL2} .
\begin{thm}\label{thm-holder} For any $\sm_0\in (0,2)$, let $\sm\in
(\sm_0,2)$ be given. If $u$ is a bounded function on $\BR^n\times I$
such that
$$\cM_{\fL}^- u-u_t\le C_0\,\,\,\text{ and }\,\,\,\cM_{\fL}^+ u-u_t\ge -C_0\,\,\text{
on $\bQ_2$}$$ in the viscosity sense, then there is some constant
$\ap>0$ $($depending only on $\ld,\Ld,n$ and $\sm_0$$)$ such that
$$\|u\|_{\rC^{\ap}(\bQ_{1/2})}\le
C\bigl(\,\|u\|_{L^{\iy}(\BR^n\times I)}+C_0\bigr)$$ where $C>0$ is
some universal constant depending only on $\ap$.\end{thm}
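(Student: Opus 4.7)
The plan is to deduce this theorem from Lemma \ref{lem-holder} by a standard normalization-plus-translation-plus-rescaling argument, so the entire content reduces to checking that the scaled function falls into the hypotheses of Lemma \ref{lem-holder} at the origin.

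First I would fix any base point $(x_0,t_0)\in\bQ_{1/2}$ and set
\begin{equation*}
M=\|u\|_{L^\iy(\BR^n\times I)}+C_0/\vep_0,\qquad \rho=1/4,
\end{equation*}
where $\vep_0>0$ is the small constant from Lemma \ref{lem-holder}. I would then define the rescaled function
\begin{equation*}
v(x,t)=\frac{1}{2M}\,u(x_0+\rho x,\,t_0+\rho^{\sm} t),\qquad (x,t)\in\BR^n\times I,
\end{equation*}
after a harmless time shift so that the relevant past is still contained in $I$. By construction $|v|\le 1/2$ on all of $\BR^n\times I$. Because $(x_0,t_0)\in\bQ_{1/2}$ and $\rho=1/4$, the image $(x_0,t_0)+\rho\,\bQ_2$ sits inside $\bQ_2$, so $v$ is the scaled version of $u$ over a region where the original Pucci inequalities hold.

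Next I would verify that $v$ inherits the Pucci inequalities with a small right-hand side. The class $\fL$ is invariant under the scaling $y\mapsto \rho y$, since the kernel bound $(2-\sm)\ld|y|^{-n-\sm}\le K(y,t)\le(2-\sm)\Ld|y|^{-n-\sm}$ is preserved, and the scaling produces a factor $\rho^{\sm}$ that matches the factor $\rho^{\sm}$ coming from the time derivative. Concretely, for any test function $\vp\in\rC^2_{\bQ_2}(v;x,t)^{\pm}$, pulling back to $u$ yields a test function in $\rC^2_{\bQ_{2\rho}(x_0,t_0)}(u;\cdot)^{\pm}$, and one obtains
\begin{equation*}
\cM^{\pm}_{\fL}v(x,t)-v_t(x,t)=\frac{\rho^{\sm}}{2M}\bigl(\cM^{\pm}_{\fL}u(\tilde x,\tilde t)-u_t(\tilde x,\tilde t)\bigr),
\end{equation*}
with $(\tilde x,\tilde t)=(x_0+\rho x,t_0+\rho^\sm t)$. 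Hence $\cM^-_{\fL}v-v_t\le \rho^{\sm}C_0/(2M)\le\vep_0$ and $\cM^+_{\fL}v-v_t\ge-\vep_0$ in the viscosity sense on $\bQ_2$, exactly the hypotheses of Lemma \ref{lem-holder}.

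Applying Lemma \ref{lem-holder} to $v$ at the origin gives a universal $\ap\in(0,\sm_0)$ and a universal $C>0$ with
\begin{equation*}
|v(x,t)-v(0,0)|\le C\bigl(|x|^{\sm}+|t|\bigr)^{\ap/\sm}.
\end{equation*}
Unraveling the scaling with $y=x_0+\rho x$, $s=t_0+\rho^\sm t$ produces
\begin{equation*}
|u(y,s)-u(x_0,t_0)|\le \frac{2MC}{\rho^{\ap}}\bigl(|y-x_0|^{\sm}+|s-t_0|\bigr)^{\ap/\sm}\le C'\bigl(\|u\|_{L^\iy(\BR^n\times I)}+C_0\bigr)\bigl(|y-x_0|^{\sm}+|s-t_0|\bigr)^{\ap/\sm}
\end{equation*}
with $C'$ universal. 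Since $(x_0,t_0)\in\bQ_{1/2}$ was arbitrary and the constants are uniform, taking the supremum over base points and adding the trivial $L^\iy$ bound gives the claimed $\|u\|_{\rC^{\ap}(\bQ_{1/2})}\le C(\|u\|_{L^\iy(\BR^n\times I)}+C_0)$.

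The only subtle point, and the step I would be most careful about, is confirming the scaling invariance of the class $\fL$ together with the compatibility of the parabolic cylinder $\bQ_2$: one must check that the translated time interval still lies in $I$ (which is why $I$ was chosen large, with $\tau_1<-100$, $\tau_2>100$ in the setup), so that the rescaled Pucci inequality really holds over the full $\bQ_2$ needed to invoke Lemma \ref{lem-holder}. Once this bookkeeping is in place, the proof reduces to a direct rescaling of the pointwise estimate.
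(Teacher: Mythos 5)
Your argument is correct and is essentially the paper's own (only sketched) proof: the paper simply states that Theorem \ref{thm-holder} follows from Lemma \ref{lem-holder} by a simple rescaling argument, and you have supplied exactly that normalization, translation, and scaling, using the scale invariance of the class $\fL$ and the choice $M=\|u\|_{L^{\iy}(\BR^n\times I)}+C_0/\vep_0$ to land in the hypotheses of the lemma. The only bookkeeping refinement is that with $\rho=1/4$ the translated-and-scaled cylinder $(x_0,t_0)+\rho\,\bQ_2$ stays inside $\bQ_2$ in the time direction only when $\sm\ge 1/2$ (one needs $2^{-\sm}+(2\rho)^{\sm}\le 2^{\sm}$), so for small $\sm_0$ you should take $\rho$ smaller, depending only on $\sm_0$, which is harmless since the resulting constant is allowed to depend on $\sm_0$ through $\ap$.
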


Corollary \ref{cor-harnack-weak} can be shown by the same way as
Theorem \ref{thm-harnack-weak}.

\begin{cor}\label{cor-harnack-weak} Given $\sm_0\in (0,2)$, let
$\sm\in(\sm_0,2)$, and let $(x_0,t_0)\in\BR^n\times I$ and $r\in
(0,2]$. If $\,u\in\rB(\BR^n\times I)$ is a viscosity supersolution
to $\cM^-_{\fL} u-u_t\le c_0$ on $\bQ_{2r}(x_0,t_0)$ such that $u\ge
0$ on $\BR^n\times I$, then there are universal constants $\vep_*>0$
and $C>0$ such that
$$\bigl|\{u>s\}\cap\bQ_r(x_0,t_0)\bigr|\le C\,r^{n+\sm}\bigl(u(x_0,t_0)+c_0\,r^{\sm}\bigr)^{\vep_*}
s^{-\vep_*},\,\forall\,s>0.$$\end{cor}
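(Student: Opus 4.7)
The plan is to mimic the proof of Theorem \ref{thm-harnack-weak} verbatim, with the single modification that the reference cube $\rQ_1$ is replaced throughout by $\bQ_1=\rQ_1\cup Q_1$. First set $q=u(x_0,t_0)+c_0 r^{\sm}/\vep_0$ and define $v(z,\tau)=u(rz+x_0,r^{\sm}\tau+t_0)/q$ for $(z,\tau)\in\bQ_2$; then $v\ge 0$ on $\BR^n\times I$ and $v(0,0)\le 1$. For any test function $\vp\in\rC^2_{\bQ_2}(v;z,\tau)^-$, its pullback $\psi(x,t)=q\,\vp\bigl((x-x_0)/r,(t-t_0)/r^{\sm}\bigr)$ lies in $\rC^2_{\bQ_{2r}(x_0,t_0)}(u;x,t)^-$, and the kernel scaling computation carried out in the proof of Theorem \ref{thm-harnack-weak} yields
\begin{equation*}
\cM^-_{\fL} v(z,\tau)-\vp_\tau(z,\tau)\le \f{r^{\sm}}{q}\bigl(\cM^-_{\fL} u(x,t)-\psi_t(x,t)\bigr)\le \f{c_0\,r^{\sm}}{q}\le \vep_0,
\end{equation*}
the final step using our choice of $q$. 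Hence $v$ is a viscosity supersolution to $\cM^-_{\fL} v-v_\tau\le\vep_0$ on $\bQ_2$.

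The second step is to apply a $\bQ_1$-analog of Theorem \ref{thm-decay}: any nonnegative viscosity supersolution $v$ of $\cM^-_{\fL} v-v_\tau\le\vep_0$ on $\bQ_2$ with $v(0,0)\le 1$ satisfies $|\{v>s\}\cap\bQ_1|\le C\,s^{-\vep_*}|\bQ_1|$. On the future half $\rQ_1\subset\bQ_1$ this is Theorem \ref{thm-decay} itself. On the past half $Q_1=B_1\times(-1,0]$ the same standard covering argument goes through once one observes that for each $(\xi,\tau)$ with $\tau\le 0$ one can locate an intermediate vertex $(y,s)$ and a radius $r_0'$ such that $(0,0)\in\rK^+_{3r_0'}(y,s)$ and $(\xi,\tau)\in\rK^-_{r_0'}(y,s)$; the hypothesis $v(0,0)\le 1$ then forces $\inf_{\rK^+_{3r_0'}(y,s)} v\le 1$, and Lemma \ref{lem-decay} delivers the geometric decay on $\rK^-_{r_0'}(y,s)$ after translation and rescaling. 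A Vitali-type selection of such boxes covers $Q_1$ and assembles into the claimed decay.

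Finally, undoing the change of variables $x=rz+x_0$, $t=r^{\sm}\tau+t_0$ (Jacobian $r^{n+\sm}$) converts the decay estimate on $\bQ_1$ into one on $\bQ_r(x_0,t_0)$, while $\{v>s\}$ becomes $\{u>qs\}$; inserting $q=u(x_0,t_0)+c_0r^{\sm}/\vep_0$ and absorbing universal constants produces the stated bound. The principal obstacle is the past-half covering: because $\tau$ can range up to $-1$ while $(0,0)$ must still sit inside $\rK^+_{3r_0'}(y,s)$, the triple $(y,s,r_0')$ must be chosen as a function of $(\xi,\tau)$, so a Vitali selection with varying radii is needed rather than a uniform dyadic grid. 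All other steps are literal transcriptions of those appearing in the proof of Theorem \ref{thm-harnack-weak}.
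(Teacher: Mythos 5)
Your overall route is the one the paper intends: the paper's entire proof of this corollary is the remark that it ``can be shown by the same way as Theorem \ref{thm-harnack-weak}'', and your rescaling step (the choice $q=u(x_0,t_0)+c_0r^{\sm}/\vep_0$, the pullback of test functions, and the computation giving $\cM^-_{\fL}v-\vp_\tau\le\vep_0$ on $\bQ_2$) is a verbatim transcription of that proof, as is the final change of variables. You are also right that the only genuinely new content is a decay estimate on the doubled cube $\bQ_1=B_1\times(-1,1]$ rather than on $\rQ_1=B_1\times(0,1]$, and your geometry for the past half $Q_1=B_1\times(-1,0]$ is the correct one: the reference point $(0,0)$ must sit in the \emph{later} box $\rK^+_{3r_0'}(y,s)$ and the target point $(\xi,\tau)$ in the earlier box $\rK^-_{r_0'}(y,s)$, since Lemma \ref{lem-decay-key} propagates control backward in time.

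The one concrete weak point is the single-step Vitali covering with varying radii. To reach $(\xi,\tau)$ with $\tau$ near $-1$ or $|\xi|$ near $1$ you are forced to take $r_0'$ bounded below by a constant (you need $(3^{\sm}+2)(r_0')^{\sm}>|\tau|$ and $4r_0'>|\xi_i|$), whereas Lemma \ref{lem-decay} is normalized with $r_0=1/(9\sqrt{n})$ and requires the supersolution inequality on all of $B_1\times(0,1]$; after translating to $(y,s)$ and dilating by $r_0'/r_0\approx 3\sqrt{n}$, you would need the equation on a cube of spatial radius about $3\sqrt{n}$, which is not contained in $\bQ_2$ where the hypothesis holds. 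So the outermost boxes of your selection cannot be handled by a single application of the lemma. The repair is not a new idea but a chaining one: keep the boxes at a fixed small radius and iterate Corollary \ref{cor-decay-key} along a chain of overlapping boxes running backward in time (exactly the mechanism of steps (iii)--(iv) in the proof of Lemma \ref{lem-decay}, via the expansions $\tilde{\rK}^{j}$), accepting that $M$ becomes $M^{j}$ along the chain, which only changes the universal constants $C$ and $\vep_*$. This is precisely what the ``standard covering argument'' invoked for Theorem \ref{thm-decay} must do; once you replace the one-step covering by this chaining, your argument is complete at the same level of detail as the paper's.
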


{\it $[$Proof of Lemma \ref{lem-holder}$]$ } We take any
$\ap\in(0,\sm_0)$ and choose some $N\ge 1$ so large that $
2^{1-\sm_0 N}2^{-k(\sm_0-\ap)N}\le 1/2$ and
\begin{equation}\label{eq-5.2.1}(2-\sm_0)\Ld\om_n\biggl(\f{2^{\sm_0+1}}{\sm_0-\ap}\,2^{-(N-1)(\sm_0-\ap)}
+\f{|\tau_1|\vee\tau_2}{\sm_0}\,2^{-(N-1)\sm_0}\biggr)\le\vep_0/2.
\end{equation} We may regard $u$ as a function on $\BR^n\times (-\infty,\tau_2)$
by setting $u=u\chi_{\BR^n\times I}$. Then it is enough to show that
there is a nondecreasing sequence $\{n_k\}_{k\in\BN\cup\{0\}}$ and a
nonincreasing sequence $\{N_k\}_{k\in\BN\cup\{0\}}$ such that
$N_k-n_k=2^{-\ap kN}$ and $n_k\le u\le N_k$ in $\rQ_{2^{-kN}}$. This
implies that the theorem holds with $C=2^{\ap N}$; for, if
$2^{-(k+1)N}\le(|x|^{\sm}+|t|)^{1/\sm}\le 2^{-kN}$ for
$k\in\BN\cup\{0\}$, then we have that
$$|u(x,t)-u(0,0)|\le\f{N_k-n_k}{2^{-\ap(k+1)N}}\cdot 2^{-\ap(k+1)N}\le
2^{\ap N}(|x|^{\sm}+|t|)^{1/\sm}.$$ We now construct $n_k$ and $N_k$
by induction process. For $k= 0$, we can take
$n_k=\inf_{\BR^n\times\BR}\,u$ and $N_0=n_0+1$ because
$\osc_{\BR^n\times\BR}\,u\le 1$. We assume that we obtained the
sequences up to $n_k$ and $N_k$ for $k\ge 1$. Then we shall show
that we can continue the sequences by finding $n_{k+1}$ and
$N_{k+1}$.

Fix any $(x,t)\in\bQ_{1/\e}$ where $\e=2^{-(k+1)N}$. Take any
$\vp\in\rC^2_{\bQ_{1/\e}}(v;x,t)^-$ where
$$v=\f{u(\e\,\cdot\,,\e^{\sm}\cdot\,)-n_k}{(N_k-n_k)/2}.$$ If we set
$\psi=n_k+\f{N_k-n_k}{2}\vp(\f{\cdot}{\e},\f{\cdot}{\e^{\sm}})$,
then we see that $\psi\in\rC^2_{\bQ_{1}}(u;\e x,\e^{\sm}t)^-$.

In $\bQ_{2^{-(k+1)N}}$, we have two possible cases; either (a)
$u>(N_k+n_k)/2$ in at least half of the points (in measure) or (b)
$u\le(N_k+n_k)/2$ in at least half of the points. First, we deal
with the case (a)
$$|\{u>(N_k+n_k)/2\}\cap\bQ_{2^{-(k+1)N}}|\ge
|\bQ_{2^{-(k+1)N}}|/2.$$ Then we see that $v\ge 0$ on $\bQ_{2^N}$
and $|\{v>1\}\cap\bQ_1|\ge|\bQ_1|/2.$ We observe that the mapping
$\cK_0\mapsto\cK_0$ given by $K\to K_{\e}$ is an isometry. Thus by
the change of variables we have that
\begin{equation*}\begin{split}&\cM^-_{\fL}v(x,t)-v_{t}(x,t)\le\cL
v(x,t)-v_{t}(x,t)\\
&=\f{2\e^{\sm}}{N_k-n_k}\biggl(\int_{\BR^n}\mu(u,\e
x,\e^{\sm}t)K_{\e}(y,t)\,dy-u_{t}(\e x,\e^{\sm}t)\biggr)
\end{split}\end{equation*} for any $K\in\cK_0$. Taking the infimum
on $\fL$ of the right-hand side in the above inequality and using
the assumption that $\cM^-_{\fL}u-u_t\le\vep_0$ on $\bQ_2$ in the
viscosity sense, we obtain that
\begin{equation*}\begin{split}\cM^-_{\fL}v(x,t)-v_{t}(x,t)&\le\f{2\e^{\sm}}{N_k-n_k}
\bigl(\cM^-_{\fL}u(\e x,\e^{\sm}t)-u_{t}(\e
x,\e^{\sm}t)\bigr)\\
&\le 2^{1-\sm_0 N}2^{-k(\sm_0-\ap)N}\vep_0\le\vep_0/2.
\end{split}\end{equation*}
This implies that
\begin{equation}\label{eq-5.2.4}\cM^-_{\fL}v-v_{t}\le\vep_0/2\,\,\text{
on $\bQ_{2^{(k+1)N}}$. }\end{equation} By the induction hypothesis,
if $2^{jN}\le(|x|^{\sm}+|t|)^{1/\sm}\le 2^{(j+1)N}$, then
\begin{equation}\begin{split}\label{eq-5.2.5}
v(x,t)&\ge\f{n_{k-j}-N_{k-j}+N_k-n_k}{(N_k-n_k)/2}\ge-2(|x|^{\sm}+|t|)^{\ap/\sm},\\
v(x,t)&\le\f{N_{k-j}-n_{k-j}+n_{k-j}-n_k}{(N_k-n_k)/2}\le
2(|x|^{\sm}+|t|)^{\ap/\sm}\end{split}\end{equation} for any
$j\in\BN\cup\{0\}$. This implies that $|v(x,t)|\le
2(|x|^{\sm}+|t|)^{\ap/\sm}$ outside $\bQ_1$.

Set $w(x,t)=\max\{v(x,t),0\}$.We are going to show that
$\cM^-_{\fL}w-w_t\le 4\vep_0$ on $\bQ_{2^{N-1}}$. Notice that if
$(x,t)\in\bQ_{2^{N-1}}$ and $y\in B_{2^{N-1}}$, then
$\mu(v^-,x,y,t)=0$ because $v\ge 0$ on $\bQ_{2^N}$; if
$(x,t)\in\bQ_{2^{N-1}}$ and $y\in B^c_{2^{N-1}}$, then
$\mu(v^-,x,y,t)=v^-(x+y,t)$ because $v\ge 0$ on $\bQ_{2^N}$. Since
$w=v+v^-$, then we see that
$\cM^-_{\fL}w-w_t\le\cM^-_{\fL}v-v_t+\cM^+_{\fL}v^--v^-_t$. Thus it
follows from \eqref{eq-5.2.1} and \eqref{eq-5.2.5} that if
$(x,t)\in\bQ_{2^{N-1}}$ is given, then
\begin{equation*}\begin{split}
\cL
v^-(x,t)&\le\int_{\BR^n}\mu(v^-,x,y,t)K(y,t)\,dy\\&\le(2-\sm)\Ld\int_{|y|\ge
2^{N-1}}\f{2(|x+y|^{\sm}+|t|)^{\ap/\sm}}{|y|^{n+\sm}}\,dy\\
&\le (2-\sm_0)\Ld\biggl(\int_{|y|\ge
2^{N-1}}\f{2^{\ap(1+1/\sm)}}{|y|^{n+\sm_0-\ap}}dy+\int_{|y|\ge
2^{N-1}}\f{(|\tau_1|\vee\tau_2)^{\ap/\sm}}{|y|^{n+\sm_0}}\biggr)\\
&\le(2-\sm_0)\Ld\om_n\biggl(\f{2^{\sm_0+1}}{\sm_0-\ap}\,2^{-(N-1)(\sm_0-\ap)}
+\f{|\tau_1|\vee\tau_2}{\sm_0}\,2^{-(N-1)\sm_0}\biggr)\\
&\le\vep_0/2\end{split}\end{equation*} for any $\cL\in\fL$, whenever
$\ap\in(0,\sm_0)$ and $\sm\in(\sm_0,2)$. Since $v^-_t(x,t)=0$ for
any $(x,t)\in\bQ_{2^{N-1}}$, we have that $\cM^+_{\fL}
v^--v^-_t\le\vep_0/2$ on $\bQ_{2^{N-1}}.$ Thus by \eqref{eq-5.2.4}
we conclude that $\cM^-_{\fL}w-w_t\le\vep_0$ on $\bQ_{2^{N-1}}$
where $\ap\in(0,\sm_0)$.

We now take any point $(x,t)\in\bQ_1$. Since
$\bQ_1\subset\bQ_2(x,t)\subset\bQ_4(x,t)\subset\bQ_{2^{N-1}}$, we
can apply Corollary \ref{cor-harnack-weak} on $\bQ_2(x,t)$ to obtain
that
$$2^{n+\sm}C\bigl(w(x,t)+2^{\sm}\vep_0\bigr)^{\vep_*}\ge|\{w>1\}\cap\bQ_2(x,t)|
\ge|\{v>1\}\cap\bQ_1|\ge\f{1}{2}\,|\bQ_1|.$$ Thus we have that
$$w(x,t)\ge\biggl(\f{|\bQ_1|}{2^{n+\sm+1}C}\biggr)^{1/\vep_*}-4\vep_0$$
for any $(x,t)\in\bQ_1$. If we select $\vep_0$ sufficiently small,
then there is some $\kp>0$ such that $w\ge\kp$ on $\bQ_1$. If we
take $N_{k+1}=N_k$ and $n_{k+1}=n_k+\kp(N_k-n_k)/2$, then we have
that $n_{k+1}\le u\le N_{k+1}$ on $\bQ_{2^{-(k+1)N}}$. Furthermore,
we have that $N_{k+1}-n_{k+1}=(1-\kp/2)2^{-\ap kN}$. Now we may
choose some small $\ap>0$ and $\kp>0$ so that $1-\kp/2=2^{-\ap N}$.
Hence we obtain that $N_{k+1}-n_{k+1}=2^{-\ap(k+1)N}$.

On the other hand, if we treat of the second case (b)
$$|\{u\le(N_k+n_k)/2\}\cap\bQ_{2^{-(k+1)N}}|\ge
|\bQ_{2^{-(k+1)N}}|/2,$$ then we consider the function $v(x,t)=\ds
\f{N_k-u(2^{-(k+1)N}x,2^{-\ap(k+1)N}t)}{(N_k-n_k)/2}$ and repeat the
same way as in the above by using $\cM^+_{\fL}u-u_t\ge-\vep_0$. \qed

\subsection{ $\rC^{1,\ap}$-estimates}\label{sec-regularity-c1a}

In this subsection, we establish an interior
$\rC^{1,\ap}$-regularity result for viscosity solutions as \cite{CS}.
We now consider the class $\fL^1$ consisting of the operators
$\cL\in\fL$ associated with kernel $K$ for which
\eqref{eq-kernel-bound} holds and there exists some $\e_1>0$ such
that
\begin{equation}\label{eq-extra}\sup_{t\in I}\sup_{h\in B_{\e_1}}\int_{\BR^n\s
B_{2\e_1}}\f{|K(y,t)-K(y-h,t)|}{|h|}\,dy\le C.\end{equation}

\begin{thm}\label{thm-regularity-c1a} For $\sm_0\in (0,2)$, let $\sm\in
(\sm_0,2)$ be given. Then there is some $\e_1>0$ $($depending on
$\ld,\Ld,\sm_0$ and the dimension $n$$)$ so that if $\,\cI$ is a
nonlocal elliptic operator with respect to $\fL^1$ in the sense of
Definition \ref{def-class} and $u\in\rB(\BR^n\times I)$ is a
viscosity solution to $u_t=\cI u$ on $\bQ_2$, then there is a
universal constant $\ap>0$ $($depending only on $\ld,\Ld,\sm_0$ and
the dimension $n$$)$ such that
$$\|u\|_{\rC^{1,\ap}(\bQ_{1/2})}\le C\bigl(\,\|u\|_{L^{\iy}(\BR^n\times I)}+|\cI 0|\,\bigr)$$
for some constant $C>0$ depending on $\ld,\Ld,\sm_0,n$ and the
constant given in \eqref{eq-extra} $($where we denote by $\cI 0$ the
value we obtain when we apply $\cI$  with $0<\sigma< 2$ to the
constant function that is equal to zero$)$.\end{thm}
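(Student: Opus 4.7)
My plan is to follow the Caffarelli--Silvestre strategy, adapted to the parabolic setting, which reduces $C^{1,\ap}$ to an iterative improvement of flatness around every point. After normalizing $u$ so that $\|u\|_{L^\iy(\BR^n\times I)}+|\cI 0|\le 1$ (subtract $t\cdot\cI 0$ and rescale), and using translation invariance in $(x,t)$ of $\cI$ (the kernels $K_{\ap\bt}(y,t)$ do not depend on $x$), it suffices to prove: there exist $\ap\in(0,1)$ (depending only on $\ld,\Ld,\sm_0,n$), $\rho\in(0,1/2)$ and $C>0$ such that for every $k\ge 0$ one can find an affine function $\el_k(x)=a_k+b_k\cdot x$ with $|a_k|+|b_k|\le C$ and
\begin{equation*}
\sup_{\bQ_{\rho^k}}|u-\el_k|\le \rho^{k(1+\ap)}.
\end{equation*}
Here the affine function is allowed to depend on $x$ only, since linear functions in $x$ are annihilated by $\cM^\pm_{\fL}$ (symmetry of the kernel), and this lets us keep the form of the equation invariant under subtraction of $\el_k$.

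The inductive step is the \emph{improvement of flatness}: if $v\in\rB(\BR^n\times I)$ satisfies $v_t=\cJ v$ on $\bQ_1$ with $\cJ$ elliptic with respect to $\fL^1$, $|\cJ 0|\le\eta$, $\|v\|_{L^\iy(\BR^n\times I)}\le 1$, then there is an affine $\el$ with $\|v-\el\|_{L^\iy(\bQ_\rho)}\le \rho^{1+\ap}$, provided $\e_1$ in \eqref{eq-extra} and $\eta$ are small. I would prove this by compactness/contradiction: assume the opposite, take sequences $\sm_m\in(\sm_0,2)$, $\cJ_m$ elliptic w.r.t.\ $\fL^1$ with vanishing $\e_1^{(m)}$, $v_m$ satisfying $\partial_t v_m=\cJ_m v_m$ in $\bQ_1$ with $|v_m|\le 1$ and no affine approximation to order $1+\ap$. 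By Theorem~\ref{thm-holder}, $\{v_m\}$ is uniformly bounded in $\rC^\beta(\bQ_{1/2})$ for some $\beta>0$, so (Arzel\`a--Ascoli plus a truncation/tail argument) a subsequence converges locally uniformly and a.e.\ on $\BR^n\times I$ to a limit $v_\iy$. Up to extracting a further subsequence, $\sm_m\to\sm_\iy\in[\sm_0,2]$; condition \eqref{eq-extra} with $\e_1^{(m)}\to 0$ gives the compactness of the operator family needed to pass to the limit and produce a limit operator $\cJ_\iy$ that is either (i) elliptic w.r.t.\ the class $\fL^1$ with order $\sm_\iy<2$ and translation-invariant in $y$, or (ii) a translation-invariant, fully nonlinear uniformly elliptic second-order operator when $\sm_\iy=2$. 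Stability (Theorem~\ref{thm-stability}) then gives $\partial_t v_\iy=\cJ_\iy v_\iy$ in $\bQ_1$.

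Now $v_\iy$ is a viscosity solution of a translation-invariant parabolic equation of the stated form, so it is $C^{1,\bt}$ at the origin for some $\bt>\ap$ by either the known interior $C^{1,\bt}$ theory for fully nonlinear parabolic PDE in case (ii), or by the same $C^{1,\ap}$ result for translation-invariant nonlocal operators of order $\sm_\iy\in(\sm_0,2)$ in case (i); since the class of operators is preserved under rescaling, this fixed-order regularity can be taken as an inductive hypothesis on the smooth-kernel case. Hence $v_\iy$ admits an affine tangent $\el_\iy$ with $|v_\iy-\el_\iy|\le C|(x,t)|^{1+\bt}$ near $0$, which for $\rho$ small enough contradicts the non-approximation assumption on $v_m$. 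This proves the improvement of flatness, from which the desired iteration and hence Theorem~\ref{thm-regularity-c1a} follow by composing affine increments, using that the rescaled equations $\tilde v(x,t)=\rho^{-k(1+\ap)}[v(\rho^k x,\rho^{k\sm}t)-\el_k(\rho^k x)]$ are solutions in $\bQ_1$ of equations in the same class with rescaled $\tilde\e_1=\e_1/\rho^k$ still controlled at each fixed step (while $\ap$ is chosen strictly smaller than what dimension and ellipticity give).

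The main obstacle is the limit step: ensuring that the rescaled kernels $\tilde K_{\ap\bt}(y,t)=\rho^{k(n+\sm)}K_{\ap\bt}(\rho^k y,\rho^{k\sm}t)$ remain a precompact family and that \eqref{eq-extra} degenerates in a controlled way so that the limit operator is translation-invariant and still of the same ellipticity type --- in particular handling both the case $\sm_m\to\sm_\iy<2$ and the degenerate case $\sm_m\to 2$ uniformly in one statement. The second, more technical obstacle is the bookkeeping of the iterative rescaling: one must verify that the condition \eqref{eq-extra} is inherited by $\tilde v$ with a constant that does not blow up through the iteration, which is exactly what the smallness choice of $\e_1$ in the statement allows (choose $\e_1$ so that the rescaled constant is below the threshold needed in the improvement-of-flatness lemma for all scales $\rho^k\ge \e_1$).
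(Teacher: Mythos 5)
Your route (improvement of flatness by compactness) is genuinely different from the paper's, which proves the theorem by applying the H\"older estimate (Theorem \ref{thm-holder}) to the incremental quotients $u^h=(\tau_h u-u)/|h|^{\bt}$: these satisfy $\cM^+_{\fL}u^h-u^h_t\ge 0$ and $\cM^-_{\fL}u^h-u^h_t\le 0$ by Theorem \ref{thm-2.0.4}, and the only obstruction --- that $u^h$ is not uniformly bounded outside a compact set --- is removed by the cutoff decomposition $u^h=\phi\,u^h+(1-\phi)u^h$, whose far part is controlled precisely by the kernel condition \eqref{eq-extra}. Iterating finitely many times raises $\rC^{\ap}$ to Lipschitz and then to $\rC^{1,\ap}$, all at unit scale, so no rescaling of \eqref{eq-extra} ever occurs.

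The compactness argument as you present it has a genuine circularity at its core. In your case (i), where $\sm_m\to\sm_\iy<2$, the limit operator $\cJ_\iy$ is again a translation-invariant operator elliptic with respect to $\fL^1$ of order $\sm_\iy\in[\sm_0,2)$ --- that is, exactly an operator covered by the theorem you are proving --- and you invoke interior $\rC^{1,\bt}$ regularity for $v_\iy$ with no independent source for it. The phrase ``this fixed-order regularity can be taken as an inductive hypothesis on the smooth-kernel case'' is not an induction: you never establish a base class by other means, and sending $\e_1^{(m)}\to 0$ neither smooths the limit kernel nor turns the limit equation into a second-order one (note also that $\sm$ is fixed in the theorem, so there is no reason for the blow-up sequence to degenerate to $\sm_\iy=2$). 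Approximation arguments of this type work only because the core case --- translation-invariant operators satisfying \eqref{eq-extra} --- has already been settled by the incremental-quotient method; that core case is precisely the present theorem. A secondary, unresolved issue is your own observation that \eqref{eq-extra} degenerates under the parabolic rescaling: restricting to scales $\rho^k\ge\e_1$ yields flatness improvement only down to scale $\e_1$, which gives no $\rC^{1,\ap}$ modulus at the origin unless the iteration can be continued past that scale, and you supply no mechanism for doing so.
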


\pf Since $u_t=\cI u$ on $\bQ_2$, by Definition \ref{def-class} we have that
$\cM^+_{\fL}u\ge\cI u-\cI 0=u_t-\cI 0\ge u_t-|\cI 0|$, and so
$\cM^+_{\fL}u-u_t\ge-|\cI 0|$ on $\bQ_2$. Similarly we have that
$\cM^-_{\fL}u-u_t\le|\cI 0|$ on $\bQ_2$. Thus it follows from
Theorem \ref{thm-holder} that $u\in\rC^{\ap}(\bQ_{1-\dt})$ for any
$\dt\in(0,1)$ and $\|u\|_{\rC^{\ap}(\bQ_{1-\dt})}\le
C(\|u\|_{L^{\iy}(\BR^n\times I)}+|\cI 0|)$. Now we will try to
improve the obtained regularity iteratively by applying Theorem
\ref{thm-holder} again until we reach Lipschitz regularity in a
finite number of steps.

Assume that we have shown that $u\in\rC^{\bt}(\bQ_r)$ for some
$\bt\in(0,1]$ and $r\in(0,1)$. Then we apply Theorem
\ref{thm-holder} to the difference quotient $u^h=(\tau_h
u-u)/|h|^{\bt}$ where $\tau_h$ is a translation operator in space
variable given by $\tau_h u(x,t)=u(x+h,t)$ for $h\in\BR^n$ and $t\in
I$. Since we see from Theorem 2.0.4 that $\cM^+_{\fL}u^h-u^h_t\ge 0$
and $\cM^-_{\fL}u^h-u^h_t\le 0$ on $\bQ_r$ (in the viscosity sense)
for any $h$ with $|h|\in(0,1-r)$, it follows from Theorem
\ref{thm-holder} that $u^h\in\rC^{\bt}(\bQ_r)$ and the family
$\{u^h\}_{|h|\in(0,1-r)}$ is uniformly bounded on $\bQ_r$ with bound
$C\|u\|_{L^{\iy}(\BR^n\times I)}$. However the functions $u^h$ is
not uniformly bounded outside the ball $\bQ_r$, and thus we can not
directly apply Theorem \ref{thm-holder}. But we have a nice tool
\eqref{eq-extra} to overcome this obstacle. For this purpose, we
employ a smooth cutoff function $\phi$ supported in $\bQ_r$ such
that $\phi\equiv 1$ in $\bQ_{r-\dt/4}$ where $\dt>0$ is some small
positive number to be determined later. We write $u^h=v^h+w^h$ where
$v^h=\phi\,u^h$ and $w^h=(1-\phi)u^h$.

Take any $(x,t)\in\bQ_{r-\dt/2}$ and $|h|<\dt/16$. Then we see
$(1-\phi(x,t))u(x,t)=(1-\phi(x,t))\tau_h u(x,t)=0$ and
$u^h(x,t)=v^h(x,t)$. We shall now prove
$v^h\in\rC^{\ap+\bt}(\bQ_{r-\dt})$ for some $\ap>0$ with
$\ap+\bt>1$. From Definition \ref{def-class}, we can derive the
following inequalities;
\begin{equation}\begin{split}\label{eq-ineq}
\cM^+_{\fL}v^h-v_t&\ge-\cM^+_{\fL^1}w^h+w_t-\cI 0=-\cM^+_{\fL^1}w^h-\cI 0,\\
\cM^-_{\fL}v^h-v_t&\le-\cM^-_{\fL^1}w^h+w_t-\cI
0=-\cM^-_{\fL^1}w^h-\cI 0
\end{split}\end{equation} on $\bQ_{r-\dt/2}$, because $w_t\equiv 0$
on $\bQ_{r-\dt/2}$. In order to apply Theorem \ref{thm-holder}, we
must show that $|\cM^+_{\fL^1}w^h|$ and $|\cM^-_{\fL^1}w^h|$ are
bounded on $\bQ_{r-\dt/2}$ by $C\|u\|_{L^{\iy}(\BR^n\times I)}$ for
some universal constant $C>0$. To show this, we have only to prove
that it is true for any operator $\cL\in\fL^1$. Take any
$\cL\in\fL^1$. Since $w_t\equiv 0$ on $\bQ_{r-\dt/2}$,the conclusion comes from the argument on the elliptic case as \cite{CS}\qed


\subsection{ Harnack inequality}
\label{sec-regularity-harnack}

Now we are going to show Harnack inequality .

\begin{thm}\label{thm-harnack-bd} Given $\sm_0\in (0,2)$, let
$\sm\in(\sm_0,2)$. If $\,u\in\rB(\BR^n\times I)$ is a positive
function such that
$$\cM_{\fL}^- u-u_t\le C_0\,\,\,\text{ and }\,\,\,\cM_{\fL}^+ u-u_t\ge -C_0\,\,\text{
on $\rQ_{2}$}$$
 in the viscosity sense, then there is some constant
$C>0$ depending only on $\ld,\Ld,n$ , $\sm_0$, $\|u\|_{L^{\infty}( \BR^n\times I)}$ such that
$$\sup_{\rQ^-_{1/2}} u\le C\,\bigl(\,\inf_{\rQ^+_{1/2}}u+C_0 \bigr).$$\end{thm}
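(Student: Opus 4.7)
The plan is to upgrade the weak $L^{\vep_*}$-measure estimate of Corollary~\ref{cor-harnack-weak} to the full Harnack inequality by a Krylov--Safonov style iteration in the spirit of \cite{CS} and its parabolic adaptation in \cite{W1}. After the normalization $u \mapsto u/(\inf_{\rQ^+_{1/2}}u + C_0/\vep_0)$, with $\vep_0$ the constant of Lemma~\ref{lem-decay-key}, it suffices to prove the existence of a universal $K>0$ such that whenever $u\ge 0$ on $\BR^n\times I$ satisfies $\cM^-_{\fL}u - u_t \le \vep_0$ and $\cM^+_{\fL}u - u_t \ge -\vep_0$ on $\rQ_2$, together with $\inf_{\rQ^+_{1/2}}u \le 1$, then $\sup_{\rQ^-_{1/2}}u \le K$.

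First I would extract a global tail-measure bound on $\rQ^-_{1/2}$ from the supersolution side. Choosing an almost-minimiser $(x^*,t^*) \in \rQ^+_{1/2}$ with $u(x^*,t^*) \le 2$, and covering $\rQ^-_{1/2}$ by finitely many sub-cylinders whose ``future tips'' can be chained back to $(x^*,t^*)$ by a bounded number of applications of Corollary~\ref{cor-harnack-weak} (each step staying inside $\rQ_2$), one concludes
\[
|\{u > s\}\cap \rQ^-_{1/2}| \le C\,s^{-\vep_*}, \qquad s > 0.
\]

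Next I would argue by contradiction using the subsolution side. Suppose $u(\bar x, \bar t) = A$ at some $(\bar x, \bar t)\in \rQ^-_{1/2}$, with $A$ very large. A point-selection dichotomy as in \cite{CS}, \S 11, obtained by applying the subsolution counterpart of Lemma~\ref{lem-decay-key} (via testing the supersolution lemma against the rescaled function $A\Psi - u$, with $\Psi$ from Lemma~\ref{lem-barrier-3}, on a small cube about $(\bar x, \bar t)$), yields the alternative: either $|\{u > A/2\}|$ occupies a universal fraction of a cube of radius $\rho$---which together with the tail bound forces $\rho \lesssim A^{-\vep_*/(n+\sm)}$---or there is a point $(\bar x_1, \bar t_1)$ within parabolic distance $\rho$ of $(\bar x, \bar t)$ satisfying $u(\bar x_1, \bar t_1) \ge 2A$. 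Choosing $\rho$ so that the first alternative is ruled out and iterating, one obtains a Cauchy sequence $(\bar x_k, \bar t_k)$ in $\overline{\rQ^-_{1/2}}$ along which $u$ grows geometrically, contradicting $u\in \rB(\BR^n\times I)$.

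The main obstacle is the time-directedness that the authors single out in the introduction: Corollary~\ref{cor-harnack-weak} at the iteration point $(\bar x_k, \bar t_k)$ only sees the past sub-cylinder $\rQ_r(\bar x_k, \bar t_k)$, so the successor $(\bar x_{k+1}, \bar t_{k+1})$ must be placed so that the weak Harnack can be re-applied at each scale while all nested sub-cubes remain inside $\rQ_2$. This is exactly the obstruction that the expansion cubes $\tilde{\rK}^m$ and elongation cubes $\overline{\rK}^m$ of Section~\ref{sec-decay-cz} were designed to handle, together with the parabolic Calder\'on--Zygmund decomposition of Lemma~\ref{lem-cz}; an analogous combinatorial scheme should drive both the covering step in the global tail bound and the point-selection iteration.
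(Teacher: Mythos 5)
Your overall strategy---a power-type tail bound on the past cylinder obtained from the weak Harnack estimate (Theorem \ref{thm-decay}/Corollary \ref{cor-harnack-weak}) applied at a near-minimizer in $\rQ^+_{1/2}$, followed by a Krylov--Safonov point selection at a point where $u$ is large---is the same as the paper's. The paper merely packages the point selection as the weighted supremum $s_0=\inf\{s: u\le s\,\dd(\cdot,\pa\rQ_1)^{-\bt}\}$ with $\bt=(n+\sm)/\vep_*$ and derives a one-shot contradiction at the extremal point $(\check x,\check t)$, rather than running your doubling iteration $u(\bar x_k,\bar t_k)\ge 2^kA$; these are interchangeable formulations of the argument in \cite{CS}, \S 11. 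Your chained tail bound and your attention to the time-orientation of Lemma \ref{lem-decay-key} (information propagating from the future cube $\rK^+_{3r}$ backward into $\rK^-_r$) are consistent with what the paper does, and if anything more explicit about the covering geometry.

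The genuine gap is in the dichotomy step. To apply the rescaled Lemma \ref{lem-decay-key} to $v=\theta A-u$ near $(\bar x_k,\bar t_k)$ you must truncate, $w=v^+$, and $w$ is then a supersolution only up to the error $\cM^+_{\fL}v^--v^-_t$, which contains $(2-\sm)\Ld\int_{|y|\gtrsim 1}\bigl(u(x+y,t)-\theta A\bigr)^+|y|^{-n-\sm}\,dy$. Your alternative ``or there is a point within parabolic distance $\rho$ with $u\ge 2A$'' disposes only of the intermediate annulus $\rho\lesssim|y|\lesssim 1$; for $|y|\gtrsim 1$ nothing is known about $u$ beyond $0\le u\le\|u\|_{L^{\iy}(\BR^n\times I)}$, and the measure bound $|\{u>s\}|\lesssim s^{-\vep_*}$ does not control the integral of $u$ over a set of small measure. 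This is exactly where the paper pays: it bounds the truncation error by a constant times $\Ld\,\|u\|_{L^{\iy}(\BR^n\times I)}\,(\dt r)^{-\sm}$, which is the sole reason the Harnack constant in the statement is allowed to depend on $\|u\|_{L^{\iy}}$. Your normalization aims at a universal $K$ independent of $\|u\|_{L^{\iy}}$; that is stronger than the stated theorem and is not delivered by the outline, and the elliptic device of \cite{CS} for removing this dependence (controlling the far tail through the equation at the infimum point) does not transfer directly, since the tail must be controlled on every time slice visited by the iteration, not only at $(\hat x,\hat t)$. To make the proof work you should either reinstate the $\|u\|_{L^{\iy}}$-dependence explicitly in the truncation estimate, as the paper does, or supply a genuinely new argument for the far-field contribution.
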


\pf Let $(\hat x,\hat t)\in\rQ^+_{1/2}$ be a point so that
$\inf_{\rQ^+_{1/2}}u=u(\hat x,\hat t)$. Then it suffices to show that
$$\sup_{\rQ^-_{1/2}} u\le C \,\bigl(u(\hat x,\hat t)+C_0 \bigr).$$ Without loss of
generality, we may assume that $u(\hat x,\hat t)\le 1$ and $C_0=1$
by dividing $u$ by $u(\hat x,\hat t)+C_0 $. Let $\vep_*>0$ be the
number given in Theorem \ref{thm-harnack-weak} and let
$\bt=(n+\sigma)/\vep_*$. We now set $s_0=\inf\{s>0:u(x,t)\le s\,
\dd((x,t),\pa\rQ_1)^{-\bt},\,\forall\,(x,t)\in\rQ_1\}.$ Then we see
that $s_0>0$ because $u$ is positive on $\BR^n\times I$. Also there
is some $(\check x,\check t)\in\rQ_1$ such that $u(\check x,\check
t)=s_0\,\dd((\check x,\check t),\pa\rQ_1))^{-\bt}=s_0\dd_0^{-\bt}$
where $\dd_0=\dd((\check x,\check t),\pa\rQ_1)\le 2^{1/\sm}<2$ for
$\sm\in(1,2)$. We note that
\begin{equation}\label{subset}
\rB^{\dd}_r(x_0,t_0)\subset
Q_r(x_0,t_0)\subset\rB^{\dd}_{2r}(x_0,t_0)\end{equation} for any
$r>0$ and $(x_0,t_0)\in\BR^n\times I$.

To finish the proof, we have only to show that $s_0$ can not be too
large because $u(x,t)\le C_1 \dd((x,t),\pa\rQ_1)^{-\bt}\le C$ for
any $x\in\rQ^-_{1/2}\subset\rQ_1$ if $C_1>0$ is some constant with
$s_0\le C_1$. Assume that $s_0$ is very large. Then by Theorem
\ref{thm-decay} we have that
$$\bigl|\{u\ge u(\check x,\check t)/2\}\cap
\rQ_1\}\bigr|\le\biggl|\f{2}{u(\check x,\check t)}\biggr|^{\vep_*}
|\rQ_1|\le C s_0^{-\vep_*}\dd_0^{n+\sm}.$$ Since $\rB^{\dd}_r(\check
x,\check t)\subset\rQ_1$ and $|\rB^{\dd}_{r}|=C\dd_0^{n+\sigma}$ for
$r=\dd_0/2\le 2^{-(1-1/\sm)}<1$ for $\sm\in(1,2)$, we easily obtain
that
\begin{equation}\label{eq-8.1}\bigl|\{u\ge u(x_0,t_0)/2\}\cap
\rB^{\dd}_r(\check x,\check t)\}\bigr|\le\biggl|\f{2}{u(\check
x,\check t)}\biggr|^{\vep_*}\le C
s_0^{-\vep_*}|\rB^{\dd}_r|.\end{equation} In order to get a
contradiction, we estimate $|\{u\le u(\check x,\check t)/2\}\cap
\rB^{\dd}_{\dt r/2}(\check x,\check t)|$ for some very small $\dt>0$
(to be determined later). For any $(x,t)\in\rB^{\dd}_{2\dt r}(\check
x,\check t)$, we have that $u(x,t)\le
s_0(\dd_0-\dt\dd_0)^{-\bt}=u(\check x,\check t)(1-\dt)^{-\bt}$ for
$\dt>0$ so that $(1-\dt)^{-\bt}$ is close to $1$. We consider the
function
$$v(x,t)=(1-\dt)^{-\bt}u(\check x,\check t)-u(x,t).$$ Then we see that $v\ge 0$ on
$\rB^{\dd}_{2\dt r}(\check x,\check t)$, and also $\cM_{\fL}^-
v-v_t\le 1$ on $Q_{\dt r}(\check x,\check t)$ because $\cM_{\fL}^+
u-u_t\ge -1$ on $Q_{\dt r}(\check x,\check t)$. We now want to apply
Theorem \ref{thm-harnack-weak} to $v$. However $v$ is not positive
on $\BR^n$ but only on $Q_{\dt r}(\check x,\check t)$. To apply
Theorem \ref{thm-harnack-weak}, we consider $w=v^+$ instead of $v$.
Since $w=v+v^-$, we have that $\cM_{\fL}^- w-w_t\le\cM_{\fL}^-
v-v_t+\cM_{\fL}^+ v^--v^-_t\le 1+\cM_{\fL}^+ v^--v^-_t$ on $Q_{\dt
r}(\check x,\check t)$. Since $v^-\equiv 0$ on $\rB^{\dd}_{2\dt
r}(\check x,\check t)$, if $(x,t)\in Q_{\dt r}(\check x,\check t)$
then we have that $\mu(v^-,x,y,t)=v^-(x+y,t)+v^-(x-y,t)$ for $y\in\BR^n$.

Take any $(x,t)\in Q_{\dt r}(\check x,\check t)$ and any
$\vp\in\rC^2_{Q_{\dt r}(\check x,\check t)}(v^-;x,t)^+$. Since
$(x,t)+B_{\dt r}\subset Q_{2\dt r}(\check x,\check t)$ and
$v^-(x,t)=0$ we have that
\begin{equation*}\begin{split}\cM_{\fL}^+
v^-(x,t)-v^-_t(x,t)&=(2-\sm)\int_{\BR^n}\f{\Ld\mu^+(v^-,x,y,t)-\ld\mu^-(v^-,x,y,t)}{|y|^{n+\sm}}\,dy\\
&\le(2-\sm)\Ld\|u\|_{L^{\infty}( \BR^n\times I)}\int_{\{y\in\BR^n:v(x+y,t)<0\}}\f{1}{|y|^{n+\sm}}\,dy\\
&\le (2-\sm)\Ld\|u\|_{L^{\infty}( \BR^n\times I)}\int_{B^c_{\dt
r}}\f{1}{|y|^{n+\sm}}\,dy\\
&\leq  C\f{(2-\sm)\Ld\|u\|_{L^{\infty}( \BR^n\times I)}}{(\dt
r)^{n+\sigma}}
\end{split}\end{equation*}

 Thus  we obtain that $w$ satisfies
$$\cM_{\fL}^- w(x,t)-w_t\le C(\dt
r)^{-\sm}\,\,\text{ on $Q_{\dt r}(\check x,\check t)$ }$$ in
viscosity sense. Since $u(\check x,\check
t)=s_0\dd_0^{-\bt}=2^{-\bt}s_0 r^{-\bt}$ and $\bt\vep_*=\sigma$,
applying Theorem \ref{thm-harnack-weak} we have that
\begin{equation*}\begin{split}&\bigl|\{u\le u(\check x,\check t)/2\}\cap\rB^{\dd}_{\dt
r/2}(\check x,\check t)\bigr|\le\bigl|\{u\le u(\check x,\check
t)/2\}\cap Q_{\dt
r/2}(\check x,\check t)\bigr|\\
&\qquad\qquad\qquad=\bigl|\{w\ge u(\check x,\check
t)((1-\dt)^{-\bt}-1/2)\}
\cap Q_{\dt r/2}(\check x,\check t)\bigr|\\
&\qquad\qquad\qquad\le C(\dt
r)^{n+\sigma}\bigl[((1-\dt)^{-\bt}-1)u(\check x,\check t)+C(\dt
r)^{-\sm}(\dt
r)^{\sm}\bigr]^{\vep_*}\\&\qquad\qquad\qquad\qquad\qquad\qquad\qquad\qquad\times\bigl[u(\check x,\check t)((1-\dt)^{-\bt}-1/2)\bigr]^{-\vep_*}\\
&\qquad\qquad\qquad\le C(\dt
r)^{n+\sigma}\bigl[((1-\dt)^{-\bt}-1)^{\vep_*}+\dt
^{-n\vep_*}s_0^{-\vep_*}\bigr].\end{split}\end{equation*} We now
choose $\dt>0$ so small enough that $C(\dt
r)^{n+\sigma}((1-\dt)^{-\bt}-1)^{\vep_*}\le |\rB^{\dd}_{\dt
r/2}|/4.$ Since $\dt$ was chosen independently of $s_0$, if $s_0$ is
large enough for such fixed $\dt$ then we get that $C(\dt
r)^{n+\sm}\dt ^{-n\vep_*}s_0^{-\vep_*}\le |\rB^{\dd}_{\dt r/2}|/4.$
Therefore we obtain that $\bigl|\{u\le u(\check x,\check
t)/2\}\cap\rB^{\dd}_{\dt r/2}(\check x,\check t)\bigr|\le
|\rB^{\dd}_{\dt r/2}|/2.$ Thus we conclude that
\begin{equation*}\begin{split}\bigl|\{u\ge u(\check x,\check t)/2\}\cap\rB^{\dd}_r(\check x,\check t)\bigr|&\ge\bigl|\{u\ge u(\check x,\check t)/2\}\cap
\rB^{\dd}_{\dt r/2}(\check x,\check t)\bigr|\\&\ge\bigl|\{u>u(\check
x,\check t)/2\}\cap\rB^{\dd}_{\dt r/2}(\check x,\check
t)\bigr|\\&\ge\bigl|\rB^{\dd}_{\dt r/2}(\check x,\check
t)\bigr|-\bigl|\rB^{\dd}_{\dt r/2}\bigr|/2\\&=\bigl|\rB^{\dd}_{\dt
r/2}\bigr|/2=C |\rB^{\dd}_r|,\end{split}\end{equation*} which
contradicts \eqref{eq-8.1} if $s_0$ is large enough. Thus we
complete the proof. \qed

\noindent{\bf Acknowledgement.} Ki-Ahm Lee was supported by Basic
Science Research Program through the National Research Foundation of
Korea(NRF)  grant funded by the Korea
government(MEST)(2010-0001985).

\end{document}